\numberwithin{equation}{section}
\theoremstyle{definition}
\numberwithin{equation}{section}
\newtheorem{theorem}{\bf Theorem}[section]
\newtheorem{remark}{\bf Remark}[section]
\newtheorem{proposition}{Proposition}[section]
\newtheorem{lemma}{Lemma}[section]
\newtheorem{corollary}{Corollary}[section]
\newtheoremstyle
{remarkstyle}
{}
{11pt}
{}
{}
{\bfseries}
{:}
{     }
{\thmname{#1} \thmnumber{#2} }
\theoremstyle{remarkstyle}
\begin{document}
	\title{
    Patterned matrices with 
    random walk entries} 
\author{Arup Bose\thanks{bosearu@gmail.com \\ 
\ Research  of  Arup Bose is supported by the J.C.~Bose National Fellowship, JBR/2023/000023 from Anusandhan National Research Foundation, 
        Govt.~of India}}
        
\author{Pradeep Vishwakarma\thanks{vishwakarmapr.rs@gmail.com\\ 
\ Research  of  Pradeep Vishwakarma is supported by the National Post Doctoral Fellowship, PDF/2025/000076 from Anusandhan National Research Foundation, 
        Govt.~of India}}
\affil{Theoretical Statistics and Mathematics Unit, Indian Statistical Institute,\newline
Kolkata, 700108, West Bengal, India}
	\date{\today}	
	\maketitle
	\begin{abstract} It is well known that the weak limit of a suitably scaled continuous-time random walk (CTRW) is the Brownian motion. We investigate the convergence of certain patterned random matrices whose entries are independent CTRWs and their time-changed versions, in a non-commutative probability framework. For the Wigner link function,  the limits are free Brownian motion and its time-changed version driven by an inverse stable subordinator. For the symmetric circulant and the circulant with CTRW entries, we use their explicit eigenvalue expressions to define some empirical processes that converge weakly to a Brownian motion and a complex Brownian motion, respectively. For matrices with iid entries, and for elliptic matrices, the algebraic limits are equal in $*$-distribution to processes whose marginals are circular and elliptic variables, respectively.  A random time-changed variant of these results is also established. 
     \\\\
\textbf{AMS Subject Classification [2020]:} Primary: 60B10; Secondary: 60B20, 15B52\vspace{0.15cm}\\
\textbf{Keywords:} Patterned matrix, Wigner matrix, circulant matrix, elliptic matrix, fractional Poisson process, semi-circle law, Brownian motion, free Brownian motion, inverse stable subordinator. 
	\end{abstract}

\section{Introduction} 

Let $\{X_k\}_{k\geq 1}$ be independent and identically distributed (iid) random variables (rvs) with mean zero and variance one, and let 
$\{S_n:=\sum_{k=1}^{n}X_k\}_{n\ge0}$ with $S_0=0$. It is well known that the scaled  continuous-time random walk (CTRW),
$\{n^{-1/2}S_{[nt]}\}_{t\ge0}$, $n\ge1$,  where $[\cdot]$ is the greatest integer function, converges weakly to a (standard) Brownian motion (BM) $\{W(t)\}_{t\ge0}$, as $n\rightarrow\infty$. This conclusion remains true if $\{X_k\}$ are not necessarily identically distributed but are independent with uniformly bounded moments of all orders. The BM is a Gaussian process with independent and stationary increments, $W(0)=0$ and for all $0\leq s \leq t < \infty$, $W(t)-W(s)$ is a Gaussian variable with mean zero and variance $t-s$. 
 
Let $\{N(t)\}_{t\ge0}$ be a renewal process with iid inter-event times $\{J_k\}_{k\ge1}$, independent of $\{X_k\}_{k\ge1}$. Then $\{S_{N(t)}\}_{t\ge0}$
is a time-changed random walk. If $\{N(t)\}_{t\ge0}$  is a Poisson process with rate one and compounding rvs $\{X_k\}_{k\ge1}$ are standard Gaussian then the scaled CTRW, $\{n^{-1/2}S_{N(nt)}\}_{t\ge0}$, $n\ge1$, converges weakly in  appropriate Skorokhod topology to a BM (see \cite{Silvestrov2004}). Moreover, if $\{J_k\}_{k\ge1}$ belong to the domain of attraction of a stable law with index $\alpha\in(0,1)$ and $\{X_k\}_{k\ge1}$ belong to the domain of attraction of a standard Gaussian law, then the scaled CTRW, $\{n^{-\alpha/2}S_{N(nt)}\}_{t\ge0}$, $n\ge1$, $0<\alpha<1$, weakly converges in an appropriate Skorokhod topology to a time-changed Brownian motion (see \cite{Meerschaert2012, Silvestrov2004}), where the time changing component is an inverse stable subordinator (for definition see Section \ref{fpp}). 

In non-commutative probability, the analogues of the Gaussian distribution and the Brownian motion are respectively the semi-circle distribution (also known as the free Gaussian distribution) and the (standard) free Brownian motion (FBM), say $\{B(t)\}_{t \geq 0}$ (see Section \ref{fbm}) for which the increments are freely independent, $B(t)=0$, and the variable $B(t)-B(s)$ has a semi-circle distribution with variance $t-s$ for every $t \geq s$. At the same time, it is known that the algebraic limit variable of scaled Wigner matrices of increasing dimensions has the semi-circle distribution (see \cite{Wigner1958}), and that independent Wigner matrices are almost surely (a.s.) asymptotically free (see \cite{Anderson2009}, \cite{Bose2021}). 

Symmetric matrix valued random processes and their associated eigenvalue processes have been studied in great detail for many years since their formal introduction in \cite{Dyson1962}. The connection between positive semi-definite matrix valued random processes and the dynamical systems associated with their eigenvalues is discussed in \cite{Norris1986}. For more details on the recent development for the eigenvalues of matrix-valued random processes, we refer to \cite{Song2022, Song2023, Song2024}, and references therein. In  \cite{Bian1997}, FBM is obtained as a scaling limit of Wigner matrices with  independent Brownian motion entries. In the last few decades, the connection between symmetric matrix valued random processes and multivariate statistical analysis and their application have been investigated in various fields, such as financial data analysis (see \cite{Gouri2006, Gnoatto2012, Gnoatto2014}), computer vision (see \cite{Li2016}), and machine learning (see \cite{Zhang2006}).

Let $(\Omega,\mathcal{F},\mathbb{P})$ be a classical probability space. Let $\mathcal{A}_{\mathbb{P}}=\cap_{1\leq p<\infty}L^p(\mathbb{P})$ be the algebra of rvs with all moments finite and let $\mathcal{M}_n(\mathcal{A}_{\mathbb{P}})$ be the algebra of $n\times n$ complex random matrices with entries from $\mathcal{A}_{\mathbb{P}}$. Then, the pair $(\mathcal{M}_n(\mathcal{A}_{\mathbb{P}}), \tau_n)$ with $\tau_n=n^{-1}\mathbb{E}\mathrm{Trace}(\cdot)$ ($\mathbb{E}$ denotes expectation with respect to the probability $\mathbb{P}$) or $\tau_n=n^{-1}\mathrm{Trace}(\cdot)$ forms a non-commutative probability space (see Section \ref{NCP} for a formal definition). Let $\{A_n(t)\}_{t\ge0}$ be a collection from $\mathcal{M}_n(\mathcal{A}_{\mathbb{P}})$. Then, it is said to be algebraically (or jointly) convergent to a collection $\{A(t)\}_{t\ge0}$ if, for any $p\in\mathbb{N}$ and distinct $t_1,\dots,t_p\in[0,\infty)$, the matrices $A_n(t_1),\dots, A_n(t_p)$ jointly converge to variables $A(t_1),\dots, A(t_p)$ (see Section \ref{jointcov} for details).
 
Motivated by the above, we consider some sequences of patterned random matrices whose entries evolve as independent CTRWs. Let $\{X_{i,j,k}\}$ and  $\{X_{i,k}\}$ be independent random variables with mean $0$, variance $1$ and all moments uniformly bounded. For $i, j \geq 1$, let $S_{i,j,n}(t)=\sum_{k=1}^{[nt]}X_{i,j,k}$ and  $S_{i,n}(t)=\sum^{[nt]}_{k=1}X_{i,k}$ ,$n\ge1$ be independent random walks. 

We construct patterned random matrices with these random walks as entries as follows. For either $d=1$ or $d=2$, let $L^d_n:\{1,2,\dots,n\}^2\rightarrow\mathbb{Z}^d$ (called link functions).
By abuse of notation, we denote these functions by $L^d$. This link is called symmetric if $L^d(i, j)=L^d(j, i)$ for all $i, j$. 
We consider matrices \{$A_n(t)=((S_{L^d(i,j),[nt]}))_{1\leq i,j\leq n}\}_{t\ge0}$ in the NCP $(\mathcal{M}_n(\mathcal{A}_{\mathbb{P}}), \tau_n)$. These matrices use $\{S_{i,n}\}$ or $\{S_{i,j,n}\}$ depending on whether $d=1$ or $d=2$. 
We also consider a time-changed version of these matrices (details given later). These matrices are symmetric if $L^d$ is symmetric. 

After recalling in Section \ref{pre}, some preliminaries that we need, in Sections \ref{marginal} and \ref{joint}, we establish respectively, the marginal and joint convergence as defined above (that is, convergence of the expected average trace of any polynomial) of 
$\{n^{-1}A_n(t)\}_{t\ge0}$ for suitable \textit{symmetric} links $L$.  These algebraic convergence results also yield the a.s.~convergence of the empirical spectral distribution (ESD) of symmetric polynomials in $n^{-1}A_n(t), t_1 < \cdots < t_k$ for every $\{t_i\}$ (and even independent copies of these).

For the choice of the Wigner link $L^2(i,j)=(\max(i,j), \min (i,j))$, $\{n^{-1}A_n(t)\}_{t\ge0}$ converges algebraically to a free Brownian motion (FBM) $\{B(t)\}_{t\ge0}$. The ESD of any symmetric polynomial in these matrices converges a.s.~to the distributions determined by the moments of the corresponding (self-adjoint) polynomial of $\{B(t)\}_{t\ge0}$. 

The choice of the link $L^1(i,j)=n/2-|n/2-|i-j||$, $1\leq i,j\leq n$,  yields a Symmetric Circulant matrix. In this case, the matrices $\{n^{-1}A_n(t)\}_{t\ge0}$ are commutative, and converge algebraically to $\{W(t)\}_{t\ge0}$, whose finite dimensional moments agree with those of a BM. The ESD of any symmetric polynomial in these matrices converges to the distribution determined by the moments of the corresponding (self-adjoint) polynomial of $\{W(t)\}_{t\ge0}$. 

In Section \ref{symcircsbm}, with the above link function, we also consider the empirical process, $Y_n(t)=n^{-1}\sum_{r=1}^n\lambda_{r,n}(t)\mathbb{I}(U_n=r)$ where $\lambda_{r,n}(t), 1\leq r\leq n$ are the eigenvalues of $n^{-1}A_n(t)$ and $U_n$ is a uniform random variable, taking values in $\{1, \ldots, n\}$, independent of all other variables. We show that $Y_n(\cdot)$ converges weakly to a BM in the Skorokhod space $D[0, \infty)$.

In Section \ref{fbm} we introduce the time-changed FBM. In Section \ref{rmtstoppedrw}, we consider patterned random matrices with stopped (via fractional Poisson process) random walk entries as $\{A_n^\alpha(t)=((S_{L^d(i,j), N^\alpha(t)}))_{1\leq i,j\leq n}\}_{t\ge0}$, $0<\alpha\leq 1$, where the time-changing component $\{N^\alpha(t)\}_{t\ge0}$ is an independent fractional Poisson process (see Section \ref{fpp} for the definition). We establish the 
algebraic convergence of the matrices $n^{-(1+\alpha)/2}A_n^\alpha (t)_{t \geq 0}$, for suitable symmetric links. 

For the link function  $L^2(i,j)=(\max(i,j), \min (i,j))$, 
$\{n^{-(1+\alpha)/2)}A_n^\alpha(t)\}_{t\ge0}$ converges algebraically to a time changed FBM $\{B(L^\alpha(t))\}_{t\ge0}$, 
where $\{L^\alpha(t)\}_{t\ge0}$ is an inverse stable subordinator independent of $B(\cdot)$. For any $t$, 
 the moments of $B(L^\alpha(t))$ identify a unique probability distribution. Simulations (see Figure \ref{fig1}) show that its density function has a longer tail compared to the semi-circle density with variance $t$, and as $\alpha\to 1$, this density approaches the latter. As before, from the algebraic convergence, the a.s.~convergence of the ESD of any symmetric polynomial can also be concluded. 

Similarly, for the link $L^1(i,j)=n/2-|n/2-|i-j||$,
$\{n^{-(1+\alpha)/2)}A_n^\alpha(t)\}_{t\ge0}$ converges algebraically to a time changed BM $\{W(L^\alpha(t))\}_{t\ge0}$, 
where $\{L^\alpha(t)\}_{t\ge0}$ is an inverse stable subordinator independent of $W$. For any $t$, 
 the moments of $W(L^\alpha(t))$ identify a unique probability distribution. 
As before, from the algebraic convergence, the a.s.~convergence of the ESD of any symmetric polynomial can also be concluded.

In Section \ref{symcirctimechangedsbm} we consider the link function $L(i,j)=n/2-|n/2-|i-j||$, $1\leq i,j\leq n$, and a process $Y_n^\alpha(t)=n^{-1}\sum_{r=1}^{n-1}\lambda_{r,n}^\alpha(t)\mathbb{I}(U_n=r)$, $t\ge0$, where $\lambda_{r,n}(t)$ are the eigenvalues of $\{n^{-(1+\alpha)/2)}A_n^\alpha(t)\}_{t\ge0}$. Analogous to the convergence results in Section \ref{symcircsbm}, we show that this process converges weakly to $\{W(L^\alpha(t))\}_{t \ge 0}$ in Skorokhod topology, 
where $\{L^\alpha(t)\}_{t \ge 0}$ is an inverse stable subordinator that is independent of the Brownian motion $\{W(t)\}_{t\ge0}$.
  
In Section \ref{nonsymmetric} we deal with a few non-symmetric cases. The circulant, IID, and elliptic matrices produce algebraic limits as complex Brownian motion, free circulation motion,  and free elliptic motion. We also discuss their time changed versions. 

\section{Notations and preliminaries}\label{pre} In this section, we fix our notation and recall known concepts and results that will be used later. Let $\mathbb{C}$, $\mathbb{Z}$ and $\mathbb{N}$ denote the sets of complex numbers, integers and positive integers, respectively. For any $B\subset \mathbb{Z}$, $\#B$ will denote its cardinality. We denote constants by $K$ or $K_1,K_2,\dots$, often without specifying their dependence on different variables. 
	
\subsection{Non-commutative probability and (free) independence}\label{NCP} A non-commutative probability space (NCP) is a pair $(\mathcal{A},\tau)$ with unital algebra $\mathcal{A}$  over $\mathbb{C}$ and a linear functional $\tau:\mathcal{A}\rightarrow\mathbb{C}$ such that $\tau(1_{\mathcal{A}})=1$, where $1_{\mathcal{A}}$ denotes the identity in $\mathcal{A}$. The state $\tau$ is called tracial if $\tau(ab)=\tau(ba)$ for all $a,b$ in $\mathcal{A}$. If $\mathcal{A}$ is a $*$-algebra, that is, there is an involution or $*$-operator $*:\mathcal{A}\rightarrow\mathcal{A}$, then a state $\tau$ on $\mathcal{A}$ is called positive if $\tau(a^*a)\ge0$ for all $a\in\mathcal{A}$. The pair $(\mathcal{A},\tau)$ with $*$-algebra $\mathcal{A}$ and positive state $\tau$ is called $*$-probability space. 
		
	
\subsubsection{Independence and free independence} Let $(\mathcal{A},\tau)$ be an NCP and $\{\mathcal{A}_j\}_{j\in\mathcal{J}}$ be a collection of unital sub-algebras of $\mathcal{A}$. Then, they are called independent if they commute and for all finite subsets $\mathcal{I}\subset\mathcal{J}$ and $a_i\in\mathcal{A}_i$, $i\in\mathcal{I}$, the state $\tau$ factorizes as follows: $\tau\big(\prod_{i\in\mathcal{I}}a_i\big)=\prod_{i\in\mathcal{I}}\tau(a_i)$.
      
The sub-algebras $\{\mathcal{A}_j\}_{j\in\mathcal{J}}$ are free independent if for $k=1,2,\dots,n$, $n\in\mathbb{N}$ and $a_{k}\in\mathcal{A}_{i_k}$, $i_k\in\mathcal{I}$ such that $i_{k'}\ne i_{k'+1}$ for $k'=1,2,\dots,n-1$ and $\tau(a_k)=0$ for all $k$, we have $\tau(a_1\dots a_n)=0$.

Elements of an algebra are called independent (resp. free independent) if the sub-algebras generated by them are independent (resp. free independent). Appropriate extensions of these concepts are available for $*$-probability spaces.

\subsubsection{Joint distribution and asymptotic freeness} \label{jointcov} Let $(\mathcal{A},\tau)$ be an NCP. Let $\mathbb{C}\langle x_1,\dots, x_p\rangle$ be the set of all polynomials in non-commutative indeterminates $x_1,\dots, x_p$, and let $g$ denote a typical polynomial in these variables. For $a_1,\dots,a_p \in \mathcal{A}$, $p\ge1$, their joint distribution is a linear functional $\varphi:\mathbb{C}\langle a_1,\dots,a_p\rangle\to\mathbb{C}$ given as follows:
\begin{equation*}
    \varphi(g)= \tau(g(a_1,\dots,a_p)).
\end{equation*} 

Let $(\mathcal{A}_n,\tau_n)$, $n\ge1$ be NCPs. Let $\{a_{1,n},\dots,a_{p,n}\}\subset\mathcal{A}_n$, with joint distribution $\varphi_n$. Then, $a_{1,n},\dots, a_{p,n}$ are said to jointly converge (in the algebraic sense) if 
$\lim_{n\longrightarrow\infty}\varphi_n(g)$ exists for all fixed $g\in\mathbb{C}\langle a_{1,n}\dots, a_{p,n}\rangle$.
The limits can be used in a natural way to define an NCP, say $(\mathcal{A}, \varphi)$ where $\mathcal{A}$ is generated by indeterminate variables $a_1, \ldots, a_p$, so that $\lim_{n\longrightarrow\infty}\varphi_n(g)=\varphi(g)$ for all $g$. If the limiting variables $a_1,\dots,a_p$ are free independent then $a_{1,n},\dots,a_{p,n}$ are called asymptotically free. Again, obvious extensions of this concept is available for $*$-probability spaces.
        
Since we will be dealing with matrices, our $*$-algebra of interest will be $\mathcal{M}_n(\mathcal{A}_{\mathbb{P}})$ consisting of all  $n\times n$ complex random matrices with entries from $\mathcal{A}_{\mathbb{P}}$, where  $\mathcal{A}_{\mathbb{P}}=\cap_{1\leq p<\infty}L^p(\mathbb{P})$ is the algebra of all rvs defined on a given probability space with all moments finite. The states on $\mathcal{M}_n(\mathcal{A}_{\mathbb{P}})$ will be either 
$\tau_n=n^{-1}\mathbb{E}^{\mathbb{P}}\mathrm{Trace}(\cdot)$ or $\tau_n=n^{-1}\mathrm{Trace}(\cdot)$.
    
    \subsection{Inverse stable subordinator and fractional Poisson process}\label{fpp} For $0<\alpha<1$, a non-negative and non-decreasing L\'evy process $\{H^\alpha(t)\}_{t\ge0}$ is an $\alpha$-stable subordinator if its Laplace transform is given by $\mathbb{E}e^{-uH^\alpha(t)}=e^{-tu^\alpha}$, $u>0$ (see \cite{Meerscheart2013}). A first passage time process $\{L^\alpha(t)\}_{t\ge0}$ defined by $L^\alpha(t)\coloneqq\inf\{s\ge0:H^\alpha(s)\ge t\}$ is called inverse $\alpha$-stable subordinator. Also, we set  $L^1(t)=t$. The $l$th order moment of $L^\alpha(t)$ is given by
	\begin{equation}\label{lmomis}
		\mathbb{E}(L^\alpha(t))^l=\frac{t^{l\alpha}\Gamma(l+1)}{\Gamma(l\alpha+1)},\ l\ge1,\ t\ge0.
	\end{equation}
	
Let $\{N(t)\}_{t\ge0}$ be a homogeneous Poisson process with parameter $\lambda>0$, independent of $\{L^{\alpha}(t)\}_{t\ge0}$. Let us consider the following time-changed process:
	\begin{equation}\label{tcr}
		N^\alpha(t)\coloneqq N(L^{\alpha}(t)),\ 0<\alpha\leq1,\ t\ge0.
	\end{equation}
    The process $\{N^\alpha(t)\}_{t\ge0}$ is called the fractional Poisson process (fPP). It was introduced and studied in \cite{Meerschaert2011}. For other characterizations of the fPP, see \cite{Beghin2009, Laskin2003, Mainardi2004}.

    \subsection{Free Brownian motion (FBM) and time-change}\label{fbm}
      A \textit{free Brownian motion} (FBM) is a collection $\{B(t)\}_{t\ge0}$ of self-adjoint operators in a $*$-probability space $(\mathcal{A},\tau)$ (where $\mathcal{A}$ is generally a von-Neumann algebra) such that (for details see \cite{Biane1998, Bian1997}), \vskip3pt
        
    \noindent (i)  $B(0)=0$;\\
    \noindent (ii) For $p\in\mathbb{N}$ and $0\leq t_0<t_1<\dots<t_p<\infty$, the increments $B(t_1)-B(t_0), B(t_2)-B(t_1),\dots,B(t_p)-B(t_{p-1})$ are freely independent;\\
    \noindent (iii) For any $s,t\ge0$, 
    $B(s+t)-B(s)$ is a semi-circle variable with variance $t$. That is, its moments uniquely determine a semi-circular law with density given as follows:
    \begin{equation*}
			f(x,t)=\begin{cases}
				\displaystyle\frac{1}{2\pi t}\sqrt{4t-x^2},\ x\in[-2\sqrt{t}, 2\sqrt{t}],\\
				0,\ \text{otherwise}.
			\end{cases}
		\end{equation*} 

A Brownian motion time-changed by an independent inverse stable subordinator is a key mathematical model for analyzing anomalous diffusion in statistical mechanics. It is used for the modeling of the motion of a particle experiencing a trapping effect. 

Here, we define a random time-changed FBM driven by an inverse stable subordinator. Let $\{B(t)\}_{t\ge0}$ be an FBM associated to the NCP $(\mathcal{A},\tau)$. Let  $L^\alpha(t), t \geq 0$ be an inverse stable subordinator on $(\Omega, \mathcal{F}, \mathbb{P})$, independent of the FBM. From \cite{Veillette2010}, all  moments of $L^\alpha(t), t \geq 0$ are finite, 
Hence  $\mathcal{A}_\infty$, the set of all polynomials in $L^\alpha(t), t \geq 0$ is an NCP with its state as expectation operator $\mathbb{E}$. 
Let us consider a collection  $\{B^\alpha(t)\}_{t\ge0}$ of variables of the tensor product space $(\hat{\mathcal{A}}:=\mathcal{A}\otimes \mathcal{A}_\infty, \hat\tau:=\mathbb{E}\tau)$ 
 defined as follows:
\begin{align}
	B^\alpha(t)\coloneqq & B(L^\alpha(t)),\ 0<\alpha\leq1, \label{tcfbm}\\
    \hat{\tau}(B^\alpha(t_1)\dots B^\alpha(t_l))\coloneqq & \mathbb{E}\tau(B^\alpha(t_1)\dots B^\alpha(t_l))\nonumber
    \end{align}
    for all $l\ge1$ and $t_1,\dots,t_l\ge0$. Equivalently,
    \begin{equation*}
	\hat{\tau}(B^\alpha(t_1)\dots B^\alpha(t_l))= \int_{\mathbb{R}^l_+}\tau(B(x_1)\dots B(x_l))\mathrm{Pr}\{L^\alpha(t_1)\in\mathrm{d}x_1,\dots,L^\alpha(t_l)\in\mathrm{d}x_l\}.
\end{equation*}
$\{B^\alpha(t)\}_{t\ge0}$ will be called a time-changed FBM. For $\alpha=1$, it reduces to the FBM.

 From \cite{Veillette2010}, it follows that all the finite dimensional joint moments of the inverse stable subordinator exist. Therefore, the linear functional $\hat{\tau}$ is well defined. In particular, for $l\ge1$, we have $\hat{\tau} ((B^\alpha(t))^l)=0$ for all $t\ge0$ whenever $l$ is odd and
\begin{equation}\label{evnmom}
	\hat{\tau}((B^\alpha(t))^{2m})=C_{m}\mathbb{E}(L^\alpha(t))^m=C_{m}\frac{t^{m\alpha}\Gamma(m+1)}{\Gamma(m\alpha+1)},\ m\ge1,\ t\ge0,
\end{equation}
where $C_{m}=(2m)!/(m+1)!m!$ is the $m$th Catalan number.  

The following result identifies the unique distribution whose odd moments are zero and even moments are given by (\ref{evnmom}).  
\begin{proposition}
	Let $Z(t)$ be a semi-circle rv with mean zero and variance $t>0$ that is independent of an inverse $\alpha$-stable subordinator $\{L^\alpha(t)\}_{t\ge0}$. Then, 
	\begin{equation}\label{tcmgf}
		\mathbb{E}e^{uZ(L^\alpha(t))}=(u^2t^\alpha)^{-1}\Big(W_{\alpha,1-\alpha}(u^2t^\alpha)-\frac{1}{\Gamma(1-\alpha)}\Big),\ 0<\alpha<1,\ u\in\mathbb{R},\ t>0,
	\end{equation} 
	where $W_{\nu,\mu}$, $\nu,\mu>0$ is the Wright function (for definition see \cite{Kilbas2006}, Eq. (1.11.1)).
    As a consequence, $\{B^\alpha(t)\}_{t\ge0}$ is a free process with $B^\alpha(0)=0$ and for each $t>0$, the spectral distribution of $B^\alpha(t)$ has a valid probability density with support $\mathbb{R}$, whose associated moment generating function (mgf) is given by (\ref{tcmgf}).
\end{proposition}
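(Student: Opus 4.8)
The plan is to obtain all the moments of $Z(L^\alpha(t))$ by conditioning, sum the exponential series, and identify the result with the stated Wright-function expression; the consequence then follows from determinacy of the moment problem together with an explicit mixture representation of the law. First I would condition on the inverse stable subordinator. Since $Z(\cdot)$ is a semi-circle family indexed by its variance and independent of $\{L^\alpha(t)\}_{t\ge0}$, a semi-circle variable of variance $s$ has vanishing odd moments and $2m$-th moment $C_m s^m$, where $C_m$ is the $m$th Catalan number. Hence, conditioning on $L^\alpha(t)=s$ and averaging, $\mathbb{E}\big(Z(L^\alpha(t))^{2m+1}\big)=0$ and $\mathbb{E}\big(Z(L^\alpha(t))^{2m}\big)=C_m\,\mathbb{E}(L^\alpha(t))^m$, which by (\ref{lmomis}) equals $C_m\,t^{m\alpha}\Gamma(m+1)/\Gamma(m\alpha+1)$. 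This is precisely the expression (\ref{evnmom}) defining $\hat\tau\big((B^\alpha(t))^{2m}\big)$, so the classical mixture $Z(L^\alpha(t))$ and the operator $B^\alpha(t)$ share all their moments.

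Next I would write $\mathbb{E}e^{uZ(L^\alpha(t))}=\sum_{m\ge0}\frac{u^{2m}}{(2m)!}\,\mathbb{E}\big(Z(L^\alpha(t))^{2m}\big)$. Using $C_m=(2m)!/((m+1)!\,m!)$ and $\Gamma(m+1)=m!$, the $m$-th summand collapses to $(u^2t^\alpha)^m/\big((m+1)!\,\Gamma(m\alpha+1)\big)$. Writing $z=u^2t^\alpha$ and recalling $W_{\nu,\mu}(z)=\sum_{k\ge0}z^k/(k!\,\Gamma(\nu k+\mu))$, I would shift the index by $k=m+1$ and use $\alpha(m+1)+1-\alpha=m\alpha+1$ to match $\sum_{m\ge0}z^m/((m+1)!\,\Gamma(m\alpha+1))$ with $z^{-1}\big(W_{\alpha,1-\alpha}(z)-1/\Gamma(1-\alpha)\big)$, the subtracted term $1/\Gamma(1-\alpha)$ being exactly the $k=0$ contribution. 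This yields (\ref{tcmgf}). Because $\alpha>-1$ the Wright function is entire, so the series converges for every $u\in\mathbb{R}$ and $t>0$, and the mgf is finite on all of $\mathbb{R}$.

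For the consequence, finiteness of the mgf near the origin makes the moment problem determinate, so (\ref{evnmom}) identifies a unique law; by the moment matching above this law is at once the distribution of $Z(L^\alpha(t))$ and the spectral distribution of $B^\alpha(t)$. The former is the scale mixture $\int_0^\infty f(x,s)\,\mathrm{Pr}\{L^\alpha(t)\in\mathrm{d}s\}$ of semi-circle densities; since $L^\alpha(t)$ has a density on $(0,\infty)$ for $t>0$ and the component of variance $s$ covers $[-2\sqrt s,2\sqrt s]$ as $s$ ranges over an unbounded set of positive mass, the mixture is a genuine probability density that is positive on all of $\mathbb{R}$. Finally $B^\alpha(0)=0$ is immediate from $L^\alpha(0)=0$ and $B(0)=0$. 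I expect the only delicate point to be this last transfer: one must invoke determinacy, guaranteed by the everywhere-finite mgf, to conclude that $B^\alpha(t)$ genuinely has the mixture law as its spectral distribution rather than merely agreeing with it in every moment.
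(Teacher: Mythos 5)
Your proof is correct and essentially the same as the paper's: both condition on the independent subordinator and reduce to the series $\sum_{m\ge0}\frac{u^{2m}t^{\alpha m}\Gamma(m+1)}{(m+1)!\,m!\,\Gamma(\alpha m+1)}$ via (\ref{lmomis}) before resumming to the Wright function with the index shift $k=m+1$ — the paper merely packages the semicircle moment series $C_m u^{2m}x^m/(2m)! = u^{2m}x^m/((m+1)!\,m!)$ as the Bessel-function mgf $I_1(2\sqrt{x}\,u)/(\sqrt{x}\,u)$ before expanding, which is term-by-term identical to your Catalan-moment computation. Your closing argument for the consequence (determinacy from the everywhere-finite mgf plus the semicircle scale-mixture representation of the density) is a correctly fleshed-out version of the paper's one-line remark that (\ref{tcmgf}) recovers the moments (\ref{evnmom}) and the second part follows.
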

\begin{proof}
	The mgf of $Z(t)$ is given by $\mathbb{E}e^{uZ(t)}=I_1(2\sqrt{t}u)/\sqrt{t}u$, $u\in\mathbb{R}$, $t>0$, where $I_\nu$, $\nu>-1$ is the modified Bessel function of first kind (for definition see \cite{Kilbas2006}, Eq. (1.7.16)).
	So,
	\begin{align*}
		\mathbb{E}e^{uZ(L^\alpha(t))}&=\int_{0}^{\infty}\mathbb{E}e^{uZ(x)}\mathrm{Pr}\{L^\alpha(t)\in\mathrm{d}x\}\\
		&=\int_{0}^{\infty}\sum_{r=0}^{\infty}\frac{(\sqrt{x}u)^{2r+1}}{(r+1)!r!\sqrt{x}u}\mathrm{Pr}\{L^\alpha(t)\in\mathrm{d}x\}\\
		&=\sum_{r=0}^{\infty}\frac{u^{2r}}{(r+1)!r!}\int_{0}^{\infty}x^r\mathrm{Pr}\{L^\alpha(t)\in\mathrm{d}x\}\\
		&=\sum_{r=0}^{\infty}\frac{u^{2r}\Gamma(r+1)t^{\alpha r}}{(r+1)!r!\Gamma(\alpha r+1)}\\
		&=(u^2t^\alpha)^{-1}\sum_{r=1}^{\infty}\frac{(u^2t^\alpha)^r}{r!\Gamma(\alpha r+1-\alpha)},
	\end{align*}
	where we have used (\ref{lmomis}) at the penultimate step. This proves the first part. Relation (\ref{tcmgf}) implies that the $l$th  moment of $Z(L^\alpha(t))$ equals (\ref{evnmom}) and the second part follows. 
\end{proof}




	\subsection{Patterned random matrices and link function}\label{rmlf}  Note that the notion of convergence as defined in Section \ref{jointcov} applies to $n \times n$ random matrices with the choice of state  being $n^{-1}\mathbb{E}\text{Trace}$. A related concept of convergence is that of the spectral distribution.  Let $A_n$ be an $n\times n$ symmetric random matrix whose entries are rvs on a probability space $(\Omega,\mathcal{F},\mathbb{P})$, and its eigenvalues are $\lambda_1,\dots,\lambda_n$. The empirical spectral distribution (ESD) of $A_n$ is defined as follows:
\begin{equation}\label{esd}
	F_{A_n}(x)=n^{-1}\sum_{k=1}^{n}\mathbb{I}_{\{\lambda_k\leq x\}},\ x\in\mathbb{R},
\end{equation}
where $\mathbb{I}$ is the indicator function and $\mathbb{R}$ denotes the set of real numbers. The expected empirical spectral distribution (EESD) of $A_n$ is defined as $ \bar{F}_{A_n}(x)\coloneqq\mathbb{E}F_{A_n}(x)$, $x\in\mathbb{R}.$ The limiting spectral distribution (LSD) of $\{A_n\}_{n\ge1}$ is the weak limit of random sequence $\{F_{A_n}\}_{n\ge1}$, whenever it exist. The ESD of $A_n$ is said to be weakly a.s. converge to a non random distribution function $F$ if $\mathbb{P}\{\omega\in\Omega:\ \text{$F_{A_n}(x)\rightarrow F(x)$ as $n\rightarrow\infty$}\}=1$ for all continuity points $x$ of $F$.
    

A class of patterned random matrices that includes many important matrices has been studied in a unified way by \cite{Bose2008}. We introduce their basic ideas now. For $d\in\{1,2\}$, let us consider a sequence $\{L^d_n\}_{n\ge1}$ of functions defined as 
$L^d_n:\{1,2,\dots,n\}^2\rightarrow\mathbb{Z}^d$ 
By using a common notation $L^d\coloneqq L^d_n$, it is called the link function. 
	
Let $\{a_{i},\ i\ge1\}$ and $\{a_{i,j},\ i\ge1,\ j\ge1\}$ be sequence and bi-sequence of rvs. For $n\ge1$, a $n\times n$ random matrix $A_n$ with link function $L^d$ and input sequence $\{a_{i},\ i\ge1\}$ or $\{a_{i,j},\ i\ge1,\ j\ge1\}$ is defined as $A_n\coloneqq((a_{L^d(i,j)}))_{1\leq i,j\leq n}$. The	following are examples of link functions for some well known random matrices:\\
	
	\noindent (i) $L^2(i,j)=(\max\{i,j\},\min\{i,j\})$: Wigner matrix;\\
	\noindent (ii) $L^1(i,j)=|i-j|$: (symmetric) Toepliz matrix;\\
	\noindent (iii) $L^1(i,j)=i+j-2$: (symmetric) Hankel matrix;\\
	\noindent (iv) $L^1(i,j)=i+j-2\ \text{mod}\ n$, $1\leq i,j\leq n$: Reverse Circulant matrix;\\
	\noindent (v) $L^1(i,j)=n/2-|n/2-|i-j||$, $1\leq i,j\leq n$: Symmetric Circulant matrix.\\
    
The following property that all the above link functions satisfy, plays a crucial role:\vskip5pt
\paragraph{\textbf{Property B}} 
	$\Delta L^d\coloneqq \sup_{n}\sup_{r\in\mathbb{Z}^d_+}\sup_{1\leq i\leq n}\#\{j:1\leq j\leq n,\ L^d(i,j)=r\}<\infty$.
\vskip5pt  

From (\ref{esd}), the $l$th moment associated to the ESD of $A_n$ is $\mu^{(l)}_{F_{A_n}}=n^{-1}\mathrm{Trace}(A_n^l)$, $l\ge1$, and the $l$th moment associated to its EESD is given by the trace-moment formula: $\mathbb{E}\mu_{F_{A_n}}^{(l)}=n^{-1}\mathbb{E}\mathrm{Trace}(A_n^l)$.
	Let us now consider a non random sequence $\{\mu^{(l)}\}_{l\ge1}$ such that 
    for all $l\ge1$, $\mathbb{E}\mu^{(l)}_{F_{A_n}}\rightarrow\mu^{(l)}$ as $n\rightarrow\infty$. 
	   If the sequence $\{\mu^{(l)}\}_{l\ge1}$ satisfies Carleman's condition, $\sum_{l=1}^{\infty}(\mu^{(2l)})^{-1/2l}=\infty$,
	then $\{\mu^{(l)}\}_{l\ge1}$ uniquely determines a probability distribution, and in that case the ESD converges in probability to this distribution. The convergence can be strengthened to a.s.~sense if 
    $\sum_{l=1}^{\infty}\mathbb{E}\big(\mu^{(l)}_{F_{A_n}}-\mathbb{E}\mu^{(l)}_{F_{A_n}}\big)^4<\infty$ for all $n\ge1$.
This route has been taken for patterned random matrices. For example, see \cite{Bose2021, Bose2008, Bose2011} and references therein, for several convergence results for patterned random matrices,  both for the ESD and for joint convergence. 

\subsubsection{Matched circuits and words} For $l\ge1$ and $n\ge1$, a function $\pi:\{0,1,\dots,l\}\rightarrow\{1,2,\dots,n\}$ with $\pi(0)=\pi(l)$ is called a circuit of length $l\ge1$. 
A circuit $\pi$ is called matched if all the $L^d$-values $L^d(\pi(j-1),\pi(j))$ are repeated. It is called pair-matched if each value $L^d(\pi(j-1), \pi(j))$ appears exactly twice. We can group these circuits into equivalence classes: circuits $\pi_1$ and $\pi_2$ belong to the same class, denoted as $\pi_1 \sim \pi_2$ if $L^d(\pi_1(j-1), \pi_1(j)) = L^d(\pi_1(i-1), \pi_1(i))$ if and only if $L^d(\pi_2(j-1), \pi_2(j)) = L^d(\pi_2(i-1), \pi_2(i))$ for all $1 \le i, j \le l$. These equivalence classes can be indexed by partitions of $\{1, \dots, l\}$, which are represented by words $w$ of length $l$ in which letters appear in alphabetical order. Let $w(i)$ denote the $i$th letter of $w$ and $\Pi(w)$ be the equivalence class associated with word $w$. Then,
	\begin{equation}\label{eqclsdef}
		\Pi(w)\coloneqq\{\pi: w(i)=w(j)\ \text{if and only if}\ L^d(\pi(j-1), \pi(j))=L^d(\pi(i-1),\pi(i))\}.
	\end{equation}	
A word $w$ is called matched if each letter repeats, and it is called pair-matched if each letter repeats exactly twice. Note that a pair-matched word is always of even length. We denote the set of pair-matched words of length $2m$, $m\ge1$ by $\mathcal{P}_2(2m)$. 
	
	\subsubsection{Joint convergence of independent random matrices}\label{jtcov} 
    We now outline the idea of joint convergence of independent symmetric patterned matrices using the trace-moment formula. Let $A_{r,n}$, $r=1,2,\dots,p$ be $n\times n$ independent random matrices with common link function $L^d$.	The $(i,j)$th entry of $A_{r,n}$ will be denoted by $A_{r,n}(L(i,j))$. Note that a typical monomial $g$ in terms of $\{A_{r,n}\}_{1\leq r\leq p}$ has the following form: $g(\{A_{r,n}\}_{1\leq r\leq p})=A_{r_1,n}\dots A_{r_l,n}$, where $r_j\in\{1,2,\dots,p\}$ for each $j=1,2,\dots,l$. The collection $\{A_{r,n}\}_{1\leq r\leq p}$ in $(\mathcal{M}_n(\mathbb{C}), \tau_n=n^{-1}\mathbb{E}\mathrm{Trace})$ converges jointly if for every fixed monomial $g$, the following converges:
	\begin{equation*}
	\tau_n(g)
	=n^{-1}\sum_{\pi:\,\text{$\pi$ is a circuit of length $m$}}\mathbb{E}A_{r_1,n}(L^d(\pi(0),\pi(1)))\dots A_{r_ln}(L^d(\pi(l-1),\pi(l))).
	\end{equation*}
	It turns out that monomials $g$ with odd number of factors always yield the limit $0$ and hence without loss we assume that the order of $g$ is $2m$ for some integer $m$. A circuit $\pi$ of length $l$ is called index-matched if the value $(r_j,L^d(\pi(j-1),\pi(j)))$ repeats for each $j=1,2,\dots,l$. Note that circuits are index-matched with reference to the monomial $g$. Let $I=\{\pi:\ \pi\ \text{is a index-matched circuit}\}$. Consider a equivalence class on $I$ where for any two circuits $\pi_1$ and $\pi_2$, we have $\pi_1\sim\pi_2$ if for any $1\leq i,j\leq l$ such that $r_i=r_j$, $A_{r_i,n}(L^d(\pi_1(i-1),\pi_1(i)))=A_{r_j,n}(L^d(\pi_1(j-1),\pi_1(j)))$ if and only if $A_{r_i,n}(L^d(\pi_2(i-1),\pi_2(i)))=A_{r_j,n}(L^d(\pi_2(j-1),\pi_2(j)))$. Any such equivalence class can be indexed by a partition of $\{1,2,\dots,l\}$, which further induces an indexed word $w$ of length $l$, where the indexed letters appear in alphabetical order. An indexed word is called pair-matched if every indexed letter in it appears exactly twice. Only such letters remain relevant in the limit. Let $w_{r_j}(j)$ denote the $j$th letter of indexed matched word $w$ and $\Pi_g(w)$ denote the class of circuits corresponding to $w$ and monomial $g$. Then,
	\begin{equation*}
		\Pi_g(w)=\{\pi:\ w_{r_i}(i)=w_{r_j}(j)\ \text{iff}\ A_{r_i,n}(L^d(\pi(i-1),\pi(i)))=A_{r_j,n}(L^d(\pi(j-1),\pi(j)))\}.
	\end{equation*}
The set of indexed pair-matched words ($g$ is necessarily of length $2m$) is denoted by
	\begin{equation*}
		\mathcal{P}_2(2m,g)=\{\text{all indexed pair-matched word of length $2m$, $m\ge1$}\}.
	\end{equation*}
If we drop the indices of the indexed word $w$, then it becomes a matched word. Thus, for each monomial $g$ (of order $2m$) there exist a bijection $\chi_g:\mathcal{P}_2(2m,g)\rightarrow\mathcal{P}_2(2m)$.

Using the results for a single matrix, this would show the joint convergence of the iid copies of the pattern matrices described above. For further details, see \cite{Bryc2006, Bose2021, Bose2008}, and references therein.

\section{Patterned random matrices with CTRW entries} \label{sec3} Motivated by all the discussions so far, let $\{X_{i,j,k}, i\ge1,\,j\ge1,\, k\ge1\}$ and $\{X_{i,k}, i\ge0, \, k\ge1\}$ be sequences of real valued rvs. Let us consider the collection of CTRWs $\{\{S_{i,j}^{(n)}(t)\}_{t\ge0}\}_{1\leq i,j\leq n}$ and $\{\{S_{i}^{(n)}(t)\}_{t\ge0}\}_{1\leq i,j\leq n}$, $n\in\mathbb{N}$ defined as follows:
\begin{eqnarray}
	S_{i,j}^{(n)}(t)\coloneqq&\sum_{k=1}^{[nt]}X_{i,j,k},\ 1\leq i,j\leq n,\ n\in\mathbb{N}, \label{ctrw1}\\
	S_{i}^{(n)}(t)\coloneqq&\sum_{k=1}^{[nt]}X_{i,k},\ 0\leq i\leq n,\ n\in\mathbb{N}.\label{ctrw11}
  \end{eqnarray}  
Let us consider the sequence $\{\{A_n(t)\}_{t\ge0}\}_{n\ge1}$ of matrix valued random processes with link function $L^d$ defined as follows: 
\begin{equation}\label{matrixdef}
	A_n(t)=n^{-1/2}((S_{L^d(i,j)}^{(n)}(t)))_{1\leq i,j\leq n},\ n\ge1,\ t\ge0.
\end{equation} 
$A_n(0)$ is taken to be the $n\times n$ matrix of zeroes. Depending on whether $d=1$ or $d=2$, the entries of $A_n(t)$ are from the sequences in (\ref{ctrw1}) or (\ref{ctrw11}) respectively. Consider the scaled matrix $n^{-1/2}A_n(t)$, with eigenvalues are $\lambda_1(t),\dots,\lambda_n(t)$. Its ESD  is given by $F_{n^{-1/2}A_n(t)}(x)\coloneqq n^{-1}\sum_{r=1}^{n}\mathbb{I}(\lambda_r(t)\leq x),\ x\in\mathbb{R}$, $t >0$. \vskip5pt

The following assumption will be needed:\vskip5pt

\noindent \textbf{Assumption I}\ $\{ X_{i,j,k}, X_{i,k}, \ i\ge0,\ j\ge0,\ k\ge1\}$ are independent rvs with zero mean and unit variance such that
$\sup_{i,j,k}\mathbb{E}[|X_{i,j,k}|^l+|X_{i,k}|^l]<K_l<\infty$ for all $l\ge3$.\vskip5pt

\subsection{Marginal algebraic convergence}\label{marginal}

Under Assumption I, the random variables $n^{-1/2}\sum_{k=1}^n X_{i,j,k}$ and $n^{-1/2}\sum_{k=1}^n X_{i,k}$ are independent  with mean $0$ and variance $1$.


Note that for a fixed $t$, the matrix $A_n(t)$ is a sum of $[nt]$-many independent symmetric random matrices, and as $n\rightarrow\infty$, the number of summands and their dimensions are increasing. Moreover, under Assumption I, the ESD of each summand converges under appropriate conditions (see Theorem 7.5.1 of \cite{Bose2021}). In the following result, we show that under Assumption I, the LSD of $n^{-1/2}A_n(t)$ coincides with that of its summands. Its proof is a particular case of the proof of Theorem \ref{thm3}. Hence, we omit it.

\begin{theorem}\label{thm1}	Suppose $\{X_{i,j,k}, \ X_{i,k},\ 0\leq i\leq j\leq n,\ k\ge1\}$, $n\in\mathbb{N}$  satisfies Assumption I.  Let $A_n(t)$ be the random matrix as defined in (\ref{matrixdef}) with symmetric link function $L^d$ that satisfies Property B. Assume that the following limits exist:
	\begin{equation}\label{classlim}
		\lim_{n\rightarrow\infty}\frac{\#\Pi(w)}{n^{1+m}}\ \text{for all $w\in\mathcal{P}_2(2m)$, $m\ge1$},
	\end{equation}
	where $\#\Pi(w)$ denotes the cardinality of the equivalence class $\Pi(w)$ as defined in (\ref{eqclsdef}).	Then, 
	\begin{equation}\label{thm1re1}
		\mu^{(l)}(t)\coloneqq\lim_{n\rightarrow\infty}\frac{\mathbb{E}\mathrm{Trace}(A_n(t))^l}{n^{1+l/2}}=\begin{cases}
			0,\ l=2m-1,\ m\ge1,\vspace{0.2cm}\\
			\displaystyle t^m\sum_{w\in\mathcal{P}_2(2m)}\lim_{n\rightarrow\infty}\frac{\#\Pi(w)}{n^{1+m}},\ l=2m,\ m\ge1.
		\end{cases}
	\end{equation}
 Thus,  the EESD of $n^{-1/2}A_n(t)$ weakly converges to the unique probability law whose $l$th moment equals $\mu^{(l)}(t)$ for each $l\ge1$. As a consequence, the self-adjoint element $n^{-1/2}A_n(t)$ of the NCP $(\mathcal{M}_n(\mathbb{R}),\tau_n=n^{-1}\mathbb{E}\mathrm{Trace})$ converges to an element $A(t)$ in $*$-algebra $\mathcal{A}(t)$ generated by $A(t)$ with state $\tau$ defined as $\tau(A^l(t))=\mu^{(l)}(t)$.
\end{theorem}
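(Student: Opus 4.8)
The plan is to reduce the statement to the known trace-moment theorem for patterned matrices (Theorem 7.5.1 of \cite{Bose2021}) by absorbing one factor of $n^{-1/2}$ into the entries. Set $b_{i,j}^{(n)}(t) := n^{-1/2}S_{i,j}^{(n)}(t)$ (and $b_i^{(n)}(t):=n^{-1/2}S_i^{(n)}(t)$ when $d=1$), and let $B_n(t):=((b_{L^d(i,j)}^{(n)}(t)))_{1\le i,j\le n}$, so that $n^{-1}A_n(t)=n^{-1/2}B_n(t)$. Then $B_n(t)$ is precisely a patterned matrix with symmetric link $L^d$ whose input entries are $\{b^{(n)}_{i,j}(t)\}$ (resp.\ $\{b^{(n)}_{i}(t)\}$). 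First I would check these inputs satisfy the hypotheses of the standard theorem: by Assumption I they are independent across distinct index values and have mean zero; their variance is $\mathbb{E}[(b^{(n)}(t))^2]=[nt]/n\to t$; and by the moment bound (\ref{partialsummom}) all higher moments are uniformly bounded in $n$ and in $0\le t\le K$. The one departure from the textbook setup is that the common variance $[nt]/n$ depends on $n$ and \emph{converges} to $t$ rather than being fixed; this triangular-array feature is harmless for the trace-moment argument.

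Next I would run the trace-moment expansion. Since $n^{-1}A_n(t)=n^{-1/2}B_n(t)$,
\begin{equation*}
\frac{\mathbb{E}\mathrm{Trace}(A_n(t))^l}{n^{1+l}}=\frac{1}{n}\,\mathbb{E}\mathrm{Trace}\big((n^{-1/2}B_n(t))^l\big)=\frac{1}{n^{1+l/2}}\sum_{\pi}\mathbb{E}\prod_{j=1}^{l}b^{(n)}_{L^d(\pi(j-1),\pi(j))}(t),
\end{equation*}
the sum being over circuits $\pi$ of length $l$. By independence and mean-zero, only matched circuits contribute, which forces $l=2m$ even and makes the odd moments vanish. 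Property B bounds the number of circuits per word, so exactly as in the proof of the cited theorem, words that are not pair-matched contribute $o(1)$ after the $n^{-(1+m)}$ normalization. For a pair-matched word $w$, independence factorizes the expectation over the $m$ matched pairs, each contributing $\mathbb{E}[(b^{(n)}(t))^2]=[nt]/n$; hence the contribution of $w$ equals $([nt]/n)^m\,\#\Pi(w)/n^{1+m}$. Letting $n\to\infty$ gives $([nt]/n)^m\to t^m$ and $\#\Pi(w)/n^{1+m}\to$ the limit assumed in (\ref{classlim}), which yields the formula (\ref{thm1re1}).

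Finally I would verify uniqueness and assemble the conclusions. Property B gives, for each pair-matched $w$, a bound $\#\Pi(w)\le K_1 n^{1+m}$, and since $|\mathcal{P}_2(2m)|=(2m-1)!!$ we obtain $\mu^{(2m)}(t)\le K_1(2m-1)!!\,t^m$. Consequently $(\mu^{(2m)}(t))^{1/(2m)}=O(\sqrt{m})$, so Carleman's condition $\sum_m(\mu^{(2m)}(t))^{-1/(2m)}=\infty$ holds and the sequence $\{\mu^{(l)}(t)\}_{l\ge1}$ determines a unique probability law. Convergence of all moments together with Carleman's condition then yields weak convergence of the EESD of $n^{-1}A_n(t)$ to this law. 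Because $n^{-1}A_n(t)$ is self-adjoint and $\tau_n((n^{-1}A_n(t))^l)=\mathbb{E}\mathrm{Trace}(A_n(t))^l/n^{1+l}\to\mu^{(l)}(t)$, the limit functional $\tau(A(t)^l):=\mu^{(l)}(t)$ defines the limit element $A(t)$ in the $*$-algebra $\mathcal{A}(t)$ it generates, giving the asserted algebraic convergence.

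I expect the main (and essentially only) genuine point to be confirming that the $n$-dependent, converging variance $[nt]/n$ slots cleanly into the standard theorem and produces precisely the factor $t^m$; everything else is a direct application of the established patterned-matrix machinery, whose hard combinatorial core --- the vanishing of non-pair-matched contributions via Property B --- is already packaged in Theorem 7.5.1 of \cite{Bose2021}.
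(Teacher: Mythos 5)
Your proposal is correct and matches the paper's route: the paper likewise deduces the theorem from Theorem 7.5.1 of \cite{Bose2021} by treating the scaled walks $n^{-1/2}S^{(n)}_{L^d(i,j)}(t)$ as patterned-matrix inputs with the uniform moment bound (\ref{partialsummom}), with the variance $[nt]/n\to t$ producing the factor $t^m$. The trace-moment details and the Carleman bound $\mu^{(2m)}(t)\leq (2m-1)!!\,(\Delta L^d)^m t^m$ that you supply are exactly the steps in the authors' own (omitted) verification, so there is nothing to correct.
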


\begin{theorem}\label{thm2as}
	For $t\ge0$, let $A_n(t)$ be as  in Theorem \ref{thm1}. Then, 
	\begin{equation*}
		\mathbb{E}\big(n^{-1}\mathrm{Trace}(n^{-1/2}A_n(t))^l-\mathbb{E}n^{-1}\mathrm{Trace}(n^{-1/2}A_n(t))^l\big)^4=t^{2l}O(n^{-2}),\ l\ge1.
	\end{equation*}
	Thus, the ESD of $n^{-1/2}A_n(t)$ converges weakly a.s. to a unique probability law with moments as in (\ref{thm1re1}).
\end{theorem}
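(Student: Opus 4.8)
The plan is to establish the fourth-moment bound by the classical four-circuit counting argument and then conclude almost sure convergence via Borel--Cantelli. Write $\beta_l^{(n)}(t):=n^{-1}\mathrm{Trace}((n^{-1}A_n(t))^l)=n^{-1-l}\sum_\pi A_\pi$, where $\pi$ ranges over circuits of length $l$ and $A_\pi:=\prod_{j=1}^l S_{L^d(\pi(j-1),\pi(j))}^{(n)}(t)$. Expanding the fourth central moment gives
\begin{equation*}
\mathbb{E}\big(\beta_l^{(n)}(t)-\mathbb{E}\beta_l^{(n)}(t)\big)^4=n^{-4(1+l)}\sum_{\pi_1,\pi_2,\pi_3,\pi_4}\mathbb{E}\prod_{s=1}^4\big(A_{\pi_s}-\mathbb{E}A_{\pi_s}\big),
\end{equation*}
so it suffices to prove that the quadruple sum is $t^{2l}O(n^{4l+2})$.

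The first step is to discard the vanishing terms. Distinct $L^d$-values index independent, mean-zero random walks, so the expectation of a product vanishes unless every $L^d$-value occurring among the $4l$ edges of $(\pi_1,\pi_2,\pi_3,\pi_4)$ is repeated (the quadruple is jointly matched). The centering moreover forces each circuit to share an $L^d$-value with at least one other circuit, since an ``isolated'' circuit factors out as an independent mean-zero term. Regarding the four circuits as vertices of a graph joined whenever they share an $L^d$-value, a surviving term requires this graph to have no isolated vertex; with four circuits only two cases remain: the graph is connected, or it splits into two connected pairs.

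The second step combines a moment estimate with vertex counting. For a jointly matched quadruple whose distinct values $v_1,\dots,v_b$ occur with multiplicities $m_1,\dots,m_b$ (so $\sum_i m_i=4l$), independence and the partial-sum bound $\mathbb{E}(S_v^{(n)}(t))^{m}\le K_m([nt])^{\lfloor m/2\rfloor}$ (nonzero only when the increment indices are at least pairwise equal) yield $\big|\mathbb{E}\prod_s A_{\pi_s}\big|\le K\,(nt)^{\sum_i\lfloor m_i/2\rfloor}\le K\,(nt)^{2l}=K\,n^{2l}t^{2l}$, and the same controls the centered product. It then remains to count the circuit-quadruples realizing each pattern. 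Property B and the standard vertex-counting estimates for patterned matrices show that the number of free vertices is at most $2l+2$ in a two-pair configuration and drops to $2l+1$ once the four circuits form a single connected cluster. Hence the two-pair terms number $O(n^{2l+2})$ and the genuinely connected terms $O(n^{2l+1})$; multiplying by the moment factor $n^{2l}t^{2l}$ and the prefactor $n^{-4(1+l)}$ gives $t^{2l}O(n^{-2})$ from the two-pair terms (these match $3\,\mathrm{Var}(\beta_l^{(n)}(t))^2$) and a negligible $t^{2l}O(n^{-3})$ from the connected terms, establishing the claimed bound.

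Finally, since $\sum_n n^{-2}<\infty$, the bound and Borel--Cantelli give $\beta_l^{(n)}(t)-\mathbb{E}\beta_l^{(n)}(t)\to 0$ a.s.\ for each fixed $l$ and $t$. Together with $\mathbb{E}\beta_l^{(n)}(t)\to\mu^{(l)}(t)$ and Carleman's condition from Theorem \ref{thm1}, this upgrades the weak convergence of the EESD to the weak almost sure convergence of the ESD of $n^{-1}A_n(t)$ to the law with moments $\mu^{(l)}(t)$. I expect the main obstacle to be the vertex-counting step: one must verify that the connectivity constraint genuinely costs a power of $n$ relative to four independent single-circuit bounds, which is what renders the connected quadruples subdominant and pins the leading order at the two-pair (variance-squared) term, while correctly tracking the random-walk scale $([nt])^{1/2}$ per edge --- absorbed by the $n^{-1-l}$ normalization --- throughout the count.
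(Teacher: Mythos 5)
Your proof is correct and follows essentially the same route as the paper's: expand the fourth central moment over quadruples of circuits, kill the terms that are not jointly matched or in which some circuit is $L^d$-value-disjoint from the rest via independence and centering, bound the surviving expectations by $K([nt])^{4\lfloor l/2\rfloor}\le K(nt)^{2l}$ using the partial-sum moment bound, invoke the standard count $Q_{l,4}\le Kn^{2l+2}$ for jointly matched, cross-matched quadruples (the paper cites this from Bryc--Dembo--Jiang and Bose--Sen rather than re-deriving it), and finish with Chebyshev and Borel--Cantelli. Your finer split of the count into two-pair configurations ($O(n^{2l+2})$, the variance-squared terms) versus fully connected clusters ($O(n^{2l+1})$) is accurate but unnecessary here, since the aggregate $O(n^{2l+2})$ bound already yields the claimed $t^{2l}O(n^{-2})$.
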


\begin{remark}\label{rem31} Fix a symmetric link function $L^d$ that satisfying Property B.
Let $A_{1,n}, A_{2,n}, \dots$ be independent $n\times n$ random matrices with link function $L^d$ and variables $\{X_{i,j,1}\}, \{X_{i,j,2}\},\dots$ or  
$\{X_{i,1}\}, \{X_{i,2}\},\dots$, depending on whether $d=1$ or $d=2$.  
Suppose the variables satisfy Assumption I. If the limits (\ref{classlim}) exist, then for each $j\ge1$, the EESD of $B_{j,n}=n^{-1/2}A_{j,n}$ weakly converges to a probability law whose $l$th moment equals $\mu^{(l)}(1)$ for every $l\ge1$, defined in (\ref{thm1re1}). Moreover, this convergence holds true for their ESD  a.s. (see \cite{Bose2021}).
      
From Theorem \ref{thm1} and Theorem \ref{thm2as} for $t=1$, it follows that the ESD of $n^{-1/2}(B_{1,n}+\dots+B_{n,n})$ also converges weakly a.s. to the probability law whose $l$th moment is given by $\mu^{(l)}(1)$ for all $l\ge1$. Thus, each $B_{j,n}$ and $n^{-1}(A_{1,n}+\dots+ A_{n,n})$ have same LSD.
\end{remark}

We now collect the LSD of some specific pattern random matrices with CTRW entries in the following corollary, In view of Theorem \ref{thm1}, its proof follows from the results of \cite{Bose2008}.   

\begin{corollary}\label{thm2}
Suppose $\{X_{i,j,k},\ X_{i,k},\ 1\leq i\leq j\leq n,\ k\ge1\}$, $n\in\mathbb{N}$ 
satisfies Assumption I. Let $\{\{A_n(t)\}_{t\ge0}\}_{n\ge1}$ be 
as defined in (\ref{matrixdef}) with link function $L^d$. Then, for each $t$,\vskip3pt

\noindent (i) for $L^2$ defined by $L^2(i,j)=(\max\{i,j\},\min\{i,j\})$, the ESD of $n^{-1/2}A_n(t)$ converges a.s.~to the semi-circular law with zero mean and variance $t$;\vskip3pt

\noindent (ii) If $L^1(i,j)=|i-j|$ then the ESD of $n^{-1/2}A_n(t)$ converges a.s.~to a probability law whose corresponding $(2m)$th moment is a sum of volumes of polyhedra in some $(k+1)$-dimensional hypercube;\vskip3pt

\noindent (iii) If $L^1(i,j)=i+j-2$ then the ESD of $n^{-1/2}A_n(t)$ converges a.s.~to a probability law, symmetric about zero, whose corresponding $2m$th moment is a sum of volumes of polyhedra in some $(k+1)$-dimensional hypercube;\vskip3pt

\noindent (iv) If $L^1(i,j)=i+j-2\ \text{mod}\ n$, $1\leq i,j\leq n$ then the ESD of $n^{-1/2}A_n(t)$ converges a.s.~to a symmetric Rayleigh law whose $(2m)$th moment is given by $t^mm!$,\ $m\ge1$;\vskip3pt

\noindent (v) If $L^1(i,j)=n/2-|n/2-|i-j||$, $1\leq i,j\leq n$ then the ESD of $n^{-1/2}A_n(t)$ converges a.s.~to the Gaussian law with mean zero and variance $t$.\vskip5pt

\end{corollary}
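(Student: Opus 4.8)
The plan is to read the limiting moments straight off Theorem \ref{thm1}, observe that they are a time-dilation of the moments attached to the \emph{standardized} patterned matrix, and then import the explicit limiting spectral distributions from \cite{Bose2008}. Fix $t>0$. By (\ref{thm1re1}) the even moments factor as
\begin{equation*}
\mu^{(2m)}(t) = t^m\,\beta_{2m}, \qquad \beta_{2m} := \sum_{w\in\mathcal{P}_2(2m)}\lim_{n\to\infty}\frac{\#\Pi(w)}{n^{1+m}},
\end{equation*}
with $\mu^{(2m-1)}(t)=0$, and Theorem \ref{thm2as} upgrades the EESD statement of Theorem \ref{thm1} to a.s.\ convergence of the ESD. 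The coefficients $\{\beta_{2m}\}_{m\ge1}$ depend only on the link $L^d$ through the pair-matched counting (\ref{classlim}), not on $t$ nor (beyond Assumption I) on the law of the increments. They are exactly the even limiting moments of the same pattern built from standardized iid entries under the usual $n^{-1/2}$ normalization, whose (symmetric, hence odd-moment-free) limiting spectral distribution $F$ has been identified in \cite{Bose2008} for each link in (i)--(v); in particular, the very existence of the limits (\ref{classlim}) for these five links is part of the content of \cite{Bose2008}.

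Next I would record the elementary dilation principle. If $Y\sim F$, with even moments $\beta_{2m}$ and vanishing odd moments, then $\sqrt{t}\,Y$ has moments $t^m\beta_{2m}$ in even degree and $0$ in odd degree, matching $\{\mu^{(l)}(t)\}$ exactly. Theorem \ref{thm1} already asserts that $\{\mu^{(l)}(t)\}$ determines a \emph{unique} probability law, and $F$ is itself moment-determinate in each of the five cases; therefore the a.s.\ LSD of $n^{-1}A_n(t)$ is the law of $\sqrt{t}\,Y$, i.e.\ the $\sqrt{t}$-dilation of the standard LSD $F$ attached to $L^d$. This is the one genuine structural observation in the proof, and it reduces all five assertions to looking up $F$.

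Finally I would substitute the known $F$ link by link. For (i) the Wigner link, $F$ is the standard semi-circle law, whose $\sqrt{t}$-dilation is the semi-circle law of variance $t$; for (v) the symmetric circulant link, $F=N(0,1)$, dilating to $N(0,t)$; for (iv) the reverse circulant link, $F$ is the symmetric Rayleigh law with $\beta_{2m}=m!$, giving even moments $t^m m!$; and for (ii) the Toeplitz and (iii) the Hankel links, $\beta_{2m}$ is a sum of volumes of polyhedra in a $(k+1)$-dimensional hypercube (the Hankel law being symmetric about $0$), so that the dilation multiplies each such volume by $t^m$. This produces all five claims. The main point to keep straight is the bookkeeping of the $\sqrt{t}$-scaling versus the $n^{-1}$ matrix normalization; there is no substantive analytic obstacle, since everything beyond the dilation is inherited from Theorem \ref{thm1}, Theorem \ref{thm2as}, and the moment computations of \cite{Bose2008}.
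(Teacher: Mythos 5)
Your proposal is correct and matches the paper's route: the paper proves this corollary exactly by invoking Theorem \ref{thm1} (whose limit moments are $t^m\beta_{2m}$ with $\beta_{2m}$ the pair-matched word limits), upgrading to a.s.\ convergence via Theorem \ref{thm2as}, and citing \cite{Bose2008} for the identification of the standardized LSD for each of the five links, the $\sqrt{t}$-dilation being the only new observation. Your write-up simply makes explicit the dilation and moment-determinacy bookkeeping that the paper leaves implicit in its one-line proof.
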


\subsection{Joint algebraic convergence}\label{joint} Now we study the finite dimensional algebraic convergence of the matrix valued process $\{n^{-1/2}A_n(t)\}_{t\ge0}$. It is enough to consider the joint convergence of its increments. For $0\leq s<t<\infty$, an increment of $\{A_n(t)\}_{t\ge0}$ is given by
\begin{equation*}
	A_n(t)-A_n(s)\coloneqq n^{-1/2}\big((S_{L^d(i,j)}^{(n)}(t)-S_{L^d(i,j)}^{(n)}(s))\big)_{1\leq i,j\leq n},\ n\ge1.
\end{equation*}
As $\{S_{i,j}^{(n)}(t)\}_{t\ge0}$'s have independent increments, $\{A_n(t)\}_{t\ge0}$ also has independent increments.

For $p\in\mathbb{N}$, let $\mathbb{T}_p=\{t_0,t_1,\dots,t_p\}\subset[0,\infty)$ with $0\leq t_0<t_1<\dots<t_p$. Let $\{A(t)\}_{t\ge0}$ be a process associated to an NCP $(\mathcal{A},\tau)$, and let
\begin{equation}\label{incset}
    \mathbb{T}_p(A(t))\coloneqq\{A_{1},\dots,A_{p}\}
\end{equation}
be the collection of increments of $\{A(t)\}_{t\ge0}$ over $\mathbb{T}_p$, where $A_{r}=A(t_r)-A(t_{r-1})$ for each $r=1,2,\dots,p$.

\begin{theorem}\label{thm3}
Let $\{A_n(t)\}_{t\ge0}$ be as in Theorem \ref{thm1}, and let $\mathbb{T}_p(n^{-1/2}A_n(t))$ be the collection of increments of $\{n^{-1/2}A_n(t)\}_{t\ge0}$ over $\mathbb{T}_p$ for $p\in\mathbb{N}$, as defined in (\ref{incset}). Let $g$ be a monomial given by $g(\mathbb{T}_p(n^{-1/2}A_n(t)))\coloneqq A_{r_1,n}\dots A_{r_l,n}$, where $A_{r_j,n}\in\mathbb{T}_p(n^{-1/2}A_n(t))$ for each $j=1,2,\dots,l\ge1$. Also, let us assume that the following limits exist:
	\begin{equation}\label{limcond2}
		\lim_{n\rightarrow\infty}\frac{\#\Pi(\chi_g(w))}{n^{1+m}}\ \text{for all $w\in\mathcal{P}_2(2m,g)$},\ m\ge1,
	\end{equation}
	where $\mathcal{P}_2(l,g)$ and $\chi_g$ are as defined in Section \ref{jtcov}.\\
 \noindent (i) Then, 
	\begin{equation}\label{thm3re1}
		\lim_{n\rightarrow\infty}\frac{\mathbb{E}\mathrm{Trace}(A_{r_1,n}\dots A_{r_l,n})}{n}=\begin{cases}
			0,\ l=2m-1,\\
			\displaystyle\prod_{\substack{j=1
            }}^{m}(t_{r_j}-t_{r_j-1})\sum_{w\in\mathcal{P}_2(2m,g)}\lim_{n\rightarrow\infty}\frac{\#\Pi(\chi_g(w))}{n^{1+m}},\ l=2m
		\end{cases}
	\end{equation}
    for all $m\ge1$.
    Consequently,
we can fix indeterminates $A(t)$ for every $t \geq 0$ and define an algebra     $\tilde{\mathcal{A}}\coloneqq\mathrm{alg}\{\cup_{p\in\mathbb{N}}\cup_{\mathbb{T}_p}\mathcal{A}(\mathbb{T}_p)\}$, generated by the union $\cup_{p\in\mathbb{N}}\cup_{\mathbb{T}_p}\mathcal{A}(\mathbb{T}_p)$, where $\mathcal{A}(\mathbb{T}_p)$ is a polynomial algebra over $\mathbb{C}$, generated by the set $\{A(t): t\in\mathbb{T}_p\}$, and define the state $\tilde{\tau}$ on it as follows:	
    \begin{equation}\label{tildetau}
		\tilde{\tau}(g(\{A(t): t\in\mathbb{T}_l\}))\coloneqq\lim_{n\rightarrow\infty}\tau_n(g(\{n^{-1/2}A_n(t): t\in\mathbb{T}_l\})),
        \end{equation}
        for all $l\ge1$ and all polynomials $g$. Then the process $\{n^{-1/2}A_n(t)\}_{t\ge0}$ associated to the NCP     $(\mathcal{M}_n(\mathbb{C}), n^{-1}\mathbb{E}\mathrm{Trace})$ converges algebraically to $\{A(t)\}_{t\ge0}$ associated to  $(\tilde{\mathcal{A}},\tilde{\tau})$. \\   
    
\noindent (ii)   If  $L^2(i,j)=(\max\{i,j\},\min\{i,j\})$ then (\ref{limcond2}) holds and the collection $\{A(t)\}_{t\ge0}$ in $\tilde{\mathcal{A}}$ is a FBM. Similarly, if $L^1(i,j)=n/2-|n/2-|i-j||$, $1\leq i,j\leq n$ then the joint moments of $\{A(t)\}_{t\ge0}$ agree with those of a BM. 
\end{theorem}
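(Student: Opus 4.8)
The plan is to compute the limiting joint moments directly from the trace-moment formula, exploiting that the underlying random walks — and hence the matrices $\{A_n(t)\}$ — have independent increments. Writing $A_{r,n}=n^{-1}(A_n(t_r)-A_n(t_{r-1}))$, I would expand
$n^{-1}\mathbb{E}\mathrm{Trace}(A_{r_1,n}\cdots A_{r_l,n})$ as a sum over circuits $\pi$, where each circuit contributes $\mathbb{E}\prod_{s=1}^{l}\big(S^{(n)}_{L^d(\pi(s-1),\pi(s))}(t_{r_s})-S^{(n)}_{L^d(\pi(s-1),\pi(s))}(t_{r_s-1})\big)$. Since entry-increments with distinct $L^d$-value are independent, and increments of a single random walk over disjoint time windows are independent, a factor survives in expectation only when its pair $(r_s,\,L^d(\pi(s-1),\pi(s)))$ is repeated; this is exactly the index-matching of Section \ref{jtcov}. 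First I would argue, as in Theorem \ref{thm1}, that odd $l$ yields $0$ and that non-(index-)pair-matched words are negligible, using the moment bound (\ref{partialsummom}) together with the circuit-counting estimates, so that only $w\in\mathcal{P}_2(2m,g)$ contribute in the limit.

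For an index-pair-matched word the two positions sharing a letter have the same time-index $r$ and the same $L^d$-value, so their joint contribution is the variance $\mathbb{E}\big(S^{(n)}(t_r)-S^{(n)}(t_{r-1})\big)^2=[nt_r]-[nt_{r-1}]\approx n(t_r-t_{r-1})$. Collecting the $m$ pairs produces $n^{m}\prod(t_r-t_{r-1})$, and after dividing by $n^{2m+1}$ the surviving combinatorial factor is $\#\Pi(\chi_g(w))/n^{1+m}$, which gives (\ref{thm3re1}); the bijection $\chi_g:\mathcal{P}_2(2m,g)\to\mathcal{P}_2(2m)$ lets me reuse the single-matrix counts. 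The limit functional $\tilde\tau$ is then well defined on the polynomial algebra $\tilde{\mathcal{A}}$, and finite-dimensional algebraic convergence of $\{n^{-1}A_n(t)\}$ follows, proving (i). The main obstacle here is bookkeeping: making the indexed-word framework and the time-windowing interact correctly so that the right variance factors attach to the right letters.

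For part (ii) with the Wigner link I would specialize the count. It is classical (see \cite{Bose2008, Bose2021}) that $\lim_n\#\Pi(w)/n^{1+m}=1$ when $w$ is a non-crossing pair-matched word and $0$ otherwise, so in particular (\ref{limcond2}) holds. Since the color sequence $(r_1,\dots,r_{2m})$ is fixed by the monomial $g$, substituting into (\ref{thm3re1}) gives $\tilde\tau(A_{r_1}\cdots A_{r_{2m}})=\sum_{\pi}\prod_{(a,b)\in\pi}(t_{r_a}-t_{r_a-1})$, the sum running over non-crossing pairings $\pi$ of $\{1,\dots,2m\}$ that pair only equal-indexed positions. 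These are exactly the joint moments of a semicircular family $\{b_r\}$ that is free with $\tau(b_rb_{r'})=\delta_{r,r'}(t_r-t_{r-1})$: for a semicircular family moments are sums over non-crossing pairings of products of covariances, and freeness with centering forces the covariance to be diagonal. Hence the increments $A(t_r)-A(t_{r-1})$ are freely independent and semicircular with variance $t_r-t_{r-1}$; with $A(0)=0$ this verifies the three defining properties in Section \ref{fbm}, so $\{A(t)\}$ is a free Brownian motion.

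For the symmetric circulant link all such matrices share the Fourier eigenbasis, so the matrices and their limits commute; thus $\tilde\tau$ restricts to a tracial state on a commutative algebra and the joint distribution is that of genuine (classical) random variables. Here the relevant count is $\lim_n\#\Pi(w)/n^{1+m}=1$ for \emph{every} pair-matched word $w$ — the feature responsible for the Gaussian marginal in Corollary \ref{thm2}(v) — so (\ref{thm3re1}) becomes $\tilde\tau(A_{r_1}\cdots A_{r_{2m}})=\sum_{\pi}\prod_{(a,b)\in\pi}(t_{r_a}-t_{r_a-1})$, now summed over \emph{all} pairings that pair only equal-indexed positions. By the Wick/Isserlis formula this is precisely the joint moment of centered jointly Gaussian variables with diagonal covariance $\mathrm{diag}(t_r-t_{r-1})$, i.e. of independent Gaussian increments; combined with commutativity, the joint moments coincide with those of a Brownian motion. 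The conceptual crux across part (ii) is recognizing these two moment formulas — over the non-crossing pairings versus over all pairings — as the respective signatures of a free and of a classical Gaussian family with independent increments.
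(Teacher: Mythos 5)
Your part (i) is essentially the paper's own argument: the same circuit expansion, the same use of zero-mean independent increments to kill circuits in which some pair $(r_s, L^d\text{-value})$ occurs only once, the same moment bound $\mathbb{E}|S^{(n)}(t)-S^{(n)}(s)|^h\le K_h([nt]-[ns])^{[h/2]}$ combined with $\#\Pi(w)\le K_l n^{(l+1)/2}$ to discard non-pair-matched words, and the same variance computation $([nt_{r}]-[nt_{r-1}])$ per pair, transported through the bijection $\chi_g$. Part (ii), however, takes a genuinely different route. The paper identifies the limit indirectly: it first shows $\tilde\tau((A(t)-A(s))^l)=\tilde\tau((A(t-s))^l)$, then invokes Corollary \ref{thm2}(i) for the semicircular marginals together with Theorem 9.2.1 of \cite{Bose2021} to get free independence of the increments (and Corollary \ref{thm2}(v) with Theorem 9.5.1 of \cite{Bose2021} in the symmetric circulant case), i.e.\ it leans on known asymptotic freeness/independence results for independent patterned matrices. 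You instead specialize the counting limits --- $\lim_n\#\Pi(w)/n^{1+m}=1$ exactly for Catalan (non-crossing) words in the Wigner case, and for \emph{all} pair-matched words in the symmetric circulant case --- and read off the resulting mixed-moment formulas directly: a sum over non-crossing pairings of diagonal covariances, which is the defining moment formula of a free semicircular family with covariance $\delta_{r,r'}(t_r-t_{r-1})$, versus a sum over all equal-index pairings, which is the Wick/Isserlis formula for independent Gaussian increments (with commutativity supplied by the common Fourier eigenbasis). Both are valid; the paper's citation-based route is shorter, while your direct identification is more self-contained and makes transparent exactly where the free/classical dichotomy enters, namely in whether the surviving word counts are supported on non-crossing pairings or on all pairings. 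One bookkeeping point worth making explicit in your write-up: for a fixed monomial $g$ the variance product $\prod(t_{r_j}-t_{r_j-1})$ is the same for every $w\in\mathcal{P}_2(2m,g)$ (it depends only on the index multiplicities in $g$), which is what justifies pulling it outside the sum over words as in (\ref{thm3re1}) and then interpreting the remaining sum as a count of admissible pairings.
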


\begin{proof}
 (i) Note that the elements of $\mathbb{T}_p(n^{-1/2}A_n(t))$ are mutually independent. 
For a circuit $\pi$ of length $l$, let 
\begin{equation*}
	S_\pi^{(n)}(\mathbb{T}_p)=\mathbb{E}(S_{L^d(\pi(0),\pi(1))}^{(n)}(t_{r_1})-S_{L^d(\pi(0),\pi(1))}^{(n)}(t_{r_1-1}))\dots (S_{L^d(\pi(l-1),\pi(l))}^{(n)}(t_{r_l})-S_{L^d(\pi(l-1),\pi(l))}^{(n)}(t_{r_l-1})).
\end{equation*}
Then, 
\begin{equation}\label{thm3pf1}
n^{-1}\mathbb{E}\mathrm{Trace}(A_{r_1,n}\dots A_{r_l,n})=\frac{1}{n^{1+l}}\sum_{\pi:\,\pi\,\text{is circuit of length $l$}}S_{\pi}^{(n)}(\mathbb{T}_p).
\end{equation}
Note that if any index $r_i$, $i=1,\dots,l$ appears only once or circuit $\pi$ is non matched then by independent increments property and zero mean of $\{S_{i,j}^{(n)}(t)\}_{t\ge0}$, we get $S_{\pi}^{(n)}(\mathbb{T}_p)=0$. Thus, $S_{\pi}^{(n)}(\mathbb{T}_p)=0$ for all non indexed matched circuits $\pi$. Let us write $M_{L^d(i,j)}^{(n)}(\mathbb{T}_p)=\sum_{l'=1}^{l}\big|S_{L^d(i,j)}^{(n)}(t_{r_{l'}})-S_{L^d(i,j)}^{(n)}(t_{r_{l'}-1})\big|$, $1\leq i,j\leq n$.
Then, 
\begin{equation*}
	|S_\pi^{(n)}(\mathbb{T}_p)|\leq\mathbb{E}M_{L^d(\pi(0),\pi(1))}^{(n)}(\mathbb{T}_p)\dots M_{L^d(\pi(l-1),\pi(l))}^{(n)}(\mathbb{T}_p).
\end{equation*}
For $h\ge1$ and $s<t$, we have $\mathbb{E}|S_{L^d(i,j)}^{(n)}(t)-S_{L^d(i,j)}^{(n)}(s)|^h\leq K_h([nt]-[ns])^{[h/2]}$, which implies $\mathbb{E}(M_{L^d(i,j)}^{(n)}(\mathbb{T}_p))^h\leq \prod_{l'=1}^{l}K_{h_{l'}}([nt_{r_{l'}}]-[nt_{r_{l'}-1}])^{[h_{l'}/2]}$, where $h_1+\dots+h_l=h$. Thus, for any non pair matched circuit $\pi$ of length $l$, following the arguments of the proof for Theorem \ref{thm1}, we get $|S_\pi^{(n)}(\mathbb{T}_p)|\leq K_l\prod_{l''=1}^{l}([nt_{r_{l'}}]-[nt_{r_{l'}-1}])^{[h_{l''}'/2]}$, where $h_1'+\dots+h_l'=l$. Now, on using $\#\Pi(w)\leq K_ln^{(l+1)/2}$ for any non pair-matched word $w$ of length $l$ (see \cite{Bose2021}, p. 120), we conclude that the contribution of a non indexed pair-matched circuit in (\ref{thm3pf1}) is zero as $n\rightarrow\infty$. Hence,
\begin{equation*}
	\lim_{n\rightarrow\infty}n^{-1}\mathbb{E}\mathrm{Trace}(A_{r_1,n}\dots A_{r_{2m},n})=\sum_{w\in\mathcal{P}_2(2m,g)}\lim_{n\rightarrow\infty}\frac{\#\Pi(\chi_g(w))}{n^{1+2m}}S_{\pi_{g,w}}^{(n)},\ m\ge1,
\end{equation*}
where $\pi_{g,w}\in\Pi(\chi_g(w))$ is a fixed indexed pair-matched circuit of length $2m$. Moreover, 
\begin{align*}
	S_{\pi_{g,w}}^{(n)}(\mathbb{T}_p)&=\mathbb{E}(S_{j_1,j_2}^{(n)}(t_{r_1})-S_{j_1,j_2}^{(n)}(t_{r_1-1}))^2\dots\mathbb{E} (S_{j_m,j_1}^{(n)}(t_{r_m})-S_{j_m,j_1}^{(n)}(t_{r_m-1}))^2\\
	&= ([nt_{r_1}]-[nt_{r_1-1}])\dots([nt_{r_m}]-[nt_{r_m-1}]),\ w\in\mathcal{P}_2(2m,g),\ m\ge1,
\end{align*}
So,
\begin{equation*}
	\lim_{n\rightarrow\infty}\frac{\#\Pi(\chi_g(w))}{n^{1+2m}}S_{\pi_{g,w}}^{(n)}(\mathbb{T}_p)=\lim_{n\rightarrow\infty}\frac{\#\Pi(\chi_g(w))}{n^{1+m}}(t_{r_1}-t_{r_1-1})\dots(t_{r_m}-t_{r_m-1}),\ w\in\mathcal{P}_2(2m,g).
\end{equation*}
This completes the proof of (i).\\

\noindent (ii) Clearly $A(0)=0$. For $0\leq s<t<\infty$ and $A(s)$ and $A(t)$ in $\tilde{\mathcal{A}}$, from (\ref{thm3re1}), we get
	\begin{equation*}
		\lim_{n\rightarrow\infty}\frac{\mathbb{E}\mathrm{Trace}(A_n(t)-A_n(s))^l}{n^{1+l/2}}=\begin{cases}
			0,\ l=2m-1,\ m\ge1,\\
			\displaystyle(t-s)^m\sum_{w\in\mathcal{P}_2(2m)}\lim_{n\rightarrow\infty}\frac{\#\Pi(w)}{n^{1+m}},\ l=2m,\ m\ge1.
		\end{cases}
	\end{equation*}
    Thus, from (\ref{thm1re1}), we get $\tilde{\tau}((A(t)-A(s))^l)=\tilde{\tau}((A(t-s))^l)$ for all $l\ge1$. 
 
  If $L^2(i,j)=(\max\{i,j\},\min\{i,j\})$ then from Corollary \ref{thm2}(i), it follows that for all $s<t$, $A(t)-A(s)$ is a semi-circle element with mean zero and variance $t-s$. Moreover, in view of part (i), the variables in $\mathbb{T}_p(A(t))$, defined in (\ref{incset}), are freely independent using Theorem 9.2.1 of \cite{Bose2021}. 
  
 For the Symmetric circulant, similar arguments work. Convergence of the moments of any monomial to the corresponding moments of a BM follows from Corollary \ref{thm2} (v) and Theorem 9.5.1 of \cite{Bose2021}. This completes the proof.
\end{proof}

\begin{remark}\label{remarkpcopies1}
(i) In \cite{Banerjee2013}, a general condition is obtained on the link function of random matrices that are not necessarily Wigner matrices, under which the associated LSD has the semi-circular density.
Let $A_n(t)$ be the random matrix with link function $L^d$, as defined in Theorem \ref{thm1}. Define 
	\begin{equation*}
		\beta_n=\max_{r}\#\{(i,j):L^d(i,j)=r,\ 1\leq i,j\leq n\},\ n\ge1.
	\end{equation*}
	Let us consider the following matched set between the columns $i$ and $j$ at stage $n$:
	\begin{equation*}
		M_{i,j}^{(n)}=\{k: 1\leq k\leq n, L^d(k,i)=L^d(k,j)\},
	\end{equation*}
	If $M^*=\sup_{n\ge1}\sup_{1\leq i,j\leq n}\# M_{i,j}^{(n)}<\infty$,  and $\beta_n=o(n)$,  then in view of Theorem 2 of \cite{Banerjee2013}, the LSD of $A_n(t)$ has semi-circle density with variance $t>0$. Indeed, Part (ii) of Theorem \ref{thm3} for the corresponding process $\{A_n(t)\}_{t\geq 0}$ also holds in this case. We omit the proof.  \vskip5pt

\noindent (ii) Part (ii) of Theorem  \ref{thm3} can be extended in another direction. Consider $p$ independent copies of $\{A_n(t)\}_{t\geq 0}$ for the Wigner link function, say $L^2$ and the Symmetric circulant link function, say $L^1$. Then for $L^2$, these $p$ copies converge in $*$-distribution and are asymtotically free.
For $L^1$, these $p$ copies converge in $*$-distribution and are asymptotically independent.
\end{remark}

\subsection{Symmetric circulant and weak convergence to a BM}\label{symcircsbm} 
Now we consider $A_n(t)$'s with the link function $L^1(i,j)=n/2-|n/2-|i-j||$, $1\leq i,j\leq n$, in more details. 
Then $F_{n^{-1}A_n(t)}, t\ge0$, the ESD of $A_n(t), t\ge0$ is a random process. 
Suppose Assumption I holds. Then we already know that for each $t>0$, the ESD $F_{n^{-1}A_n(t)}$ converges weakly a.s.~to the Gaussian law with mean zero and variance $t$. 

The eigenvalues $\lambda_{1,n}(t),\dots,\lambda_{n,n}(t)$ of the Symmetric Circulant matrix $n^{-1/2}A_n(t)$ are given as follows (see \cite{Bose2018}, Section 4.2):
\begin{equation*}
    \lambda_{r,n}(t)=\frac{S_0^{(n)}}{n}+\frac{2}{n}\sum_{j=1}^{[n/2]}S_j^{(n)}(t)\cos{\bigg(\frac{2\pi rj}{n}\bigg)},\ t>0,\ r=1,\dots,n,
\end{equation*}
where $S_j^{(n)}(t)=\sum_{k=1}^{[nt]}X_{j,k}$ is as defined in (\ref{ctrw11}).

Let $U_n$ be a discrete uniform rv on $\{1,2,\dots,n\}$ that is independent of $\{\lambda_{r,n}(t)\}_{t\ge0}$ for all $n\ge1$ and $1\leq r\leq n$. Let us now define a sequence of random processes $\{Y_n(t)\}_{t\ge0}$, $n\ge1$ as follows:
\begin{equation}\label{newp}
    Y_n(t)\coloneqq \lambda_{U_n,n}(t)=n^{-1}\sum_{r=1}^n \lambda_{r,n}\mathbb{I}(U_n=r),\ \text{for all}\ t\ge0.
\end{equation}
 In the next result, we establish the functional convergence of $\{Y_n(t)\}_{t\ge0}$, as $n\rightarrow\infty$.
 \begin{theorem}\label{thmwc1}
     Let $\{W(t)\}_{t\geq 0}$ be a BM, and let $\{Y_n(t)\}_{t\geq 0}$ be the random process, as defined in (\ref{newp}). Then, 
     $\{Y_n(t)\}_{t\geq 0}$ converges weakly to $\{W(t)\}_{t \geq 0}$, as $n\rightarrow\infty$.
 \end{theorem}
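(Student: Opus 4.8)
The plan is to reduce the statement to a functional central limit theorem (a Donsker-type invariance principle) in the Skorokhod space $D[0,\infty)$, by first exposing a partial-sum structure hidden in $Y_n$. Conditioning on $U_n=r$ and substituting $S_j^{(n)}(t)=\sum_{k=1}^{[nt]}X_{j,k}$ into the eigenvalue formula, I would interchange the two sums and write
\begin{equation*}
Y_n(t)=\frac{1}{n}\sum_{k=1}^{[nt]}Z_k^{(r)},\qquad Z_k^{(r)}:=X_{0,k}+2\sum_{j=1}^{[n/2]}\cos\!\Big(\frac{2\pi rj}{n}\Big)X_{j,k}.
\end{equation*}
The crucial point is that, for fixed $r$, the variables $\{Z_k^{(r)}\}_{k\ge1}$ are independent across $k$ with mean $0$ and common variance $\sigma_{r,n}^2:=1+4\sum_{j=1}^{[n/2]}\cos^2(2\pi rj/n)=n\,c_{r,n}$, so that conditionally $Y_n(\cdot)$ is exactly a scaled partial-sum process. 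Using $\cos^2\theta=\tfrac12(1+\cos 2\theta)$ and summing the resulting geometric cosine series, I would show $c_{r,n}\to 1$ for every fixed $r$, and more importantly that the uniform index satisfies $c_{U_n,n}\to 1$ in probability: the only frequencies for which $\sum_j\cos(4\pi rj/n)$ fails to be $o(n)$ are the $O(1)$ exceptional values with $2r\equiv 0\pmod n$, which carry vanishing mass under $U_n$.

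With this reduction, I would prove weak convergence in the usual two steps. For the finite-dimensional distributions, fix $0\le t_1<\cdots<t_p$; conditionally on $U_n=r$ the vector $(Y_n(t_1),\dots,Y_n(t_p))$ is a linear image of the independent increments of a random walk, with $\mathrm{Cov}(Y_n(t_i),Y_n(t_j)\mid U_n=r)=c_{r,n}[n\min(t_i,t_j)]/n\to\min(t_i,t_j)$ whenever $c_{r,n}\to 1$. Asymptotic normality follows from the Lindeberg–Feller functional CLT for the triangular array $\{Z_k^{(r)}/\sqrt{n}\}$; the Lindeberg condition is verified from $\mathbb{E}(Z_k^{(r)})^4\le K(\mathbb{E}(Z_k^{(r)})^2)^2=K\sigma_{r,n}^4$, which holds because $Z_k^{(r)}$ is a sum of independent variables with uniformly bounded fourth moments (Assumption I). Unconditioning via characteristic functions, $\mathbb{E}\exp(i\sum_k\theta_k Y_n(t_k))=\mathbb{E}_{U_n}\mathbb{E}[\,\cdot\mid U_n]$, and since $c_{U_n,n}\to 1$ in probability, bounded convergence yields the Gaussian characteristic function with Brownian covariance $\min(t_i,t_j)$.

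For tightness in $D[0,\infty)$ I would establish a fourth-moment increment bound. Conditionally on $U_n=r$, $Y_n(t)-Y_n(s)=n^{-1}\sum_{k=[ns]+1}^{[nt]}Z_k^{(r)}$ is a sum of $m:=[nt]-[ns]$ i.i.d.\ centered variables, so $\mathbb{E}[|Y_n(t)-Y_n(s)|^4\mid U_n=r]\le K(m+m^2)\sigma_{r,n}^4/n^4=K(m+m^2)c_{r,n}^2/n^2$; averaging over $U_n$ (using that $\mathbb{E}c_{U_n,n}^2$ is bounded) gives, for $t-s\ge 1/n$, the bound $\mathbb{E}|Y_n(t)-Y_n(s)|^4\le K(t-s)^2$. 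This is precisely the Kolmogorov–Chentsov/Billingsley condition yielding tightness, once the short-interval regime $t-s<1/n$ is handled by the standard argument for piecewise-constant partial-sum processes (on such intervals $Y_n$ is constant except for a single jump $Z_k^{(r)}/n$). Combining finite-dimensional convergence with tightness gives $Y_n\Rightarrow W$.

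The main obstacle, and the step needing the most care, is the dependence of the whole path $Y_n(\cdot)$ on the single shared random frequency $U_n$: unlike a genuine partial-sum process, $Y_n$ is a mixture and its increments are only conditionally independent. The argument therefore hinges on showing that the random variance factor $c_{U_n,n}$ concentrates at $1$ in probability, which requires the cancellation in $\sum_{j=1}^{[n/2]}\cos(4\pi rj/n)$ to persist for all but a vanishing proportion of frequencies $r$, together with a careful passage from the clean conditional Donsker limit to the unconditional one. The triangular-array nature of $\{Z_k^{(r)}\}$ (the summands depend on $n$) is the secondary technical point, forcing the use of the Lindeberg–Feller version of the invariance principle rather than the classical Donsker theorem.
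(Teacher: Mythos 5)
Your proposal is correct in substance, but it reaches the finite-dimensional distributions by a genuinely different route than the paper. The paper never conditions on $U_n$: it observes that, since symmetric circulants commute and are simultaneously diagonalized by the discrete Fourier matrix, $\mathbb{E}\big[Y_n(t_1)^{m_1}\cdots Y_n(t_p)^{m_p}\big]=n^{-1}\mathbb{E}\mathrm{Trace}\big(A_n(t_1)^{m_1}\cdots A_n(t_p)^{m_p}\big)$, so the joint moments of $Y_n$ are exactly the normalized trace moments already shown in Theorem \ref{thm3} to converge to Gaussian (BM) moments; moment determinacy of the Gaussian then settles the fdd limit in one stroke. You instead expose the conditional partial-sum structure $Y_n(t)=n^{-1}\sum_{k\le[nt]}Z_k^{(r)}$ given $U_n=r$ and run a Lindeberg--Feller argument plus an unconditioning step, with the concentration $c_{U_n,n}\to1$ carrying the load. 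Your route is self-contained (it bypasses Theorem \ref{thm3} and moment determinacy altogether) and makes the mixture structure of $Y_n$ explicit, at the price of the trigonometric lemma: be aware that the crude geometric-series bound $\big|\sum_{j\le[n/2]}e^{4\pi\iota rj/n}\big|\le 1/|\sin(2\pi r/n)|$ is \emph{not} $o(n)$ uniformly --- it is of order $n$ for $r=1$ or for $r$ near $n/2$ when $n$ is odd --- so you genuinely need the exact closed form, or the symmetry $j\mapsto n-j$ which reduces the half-range sum to half of $\sum_{j=1}^{n-1}\cos(4\pi rj/n)=-1$, to see that the sum is in fact $O(1)$ for every $r$ with $2r\not\equiv0\pmod n$; your exact-summation plan does deliver this, but the ``only exceptional frequencies matter'' phrasing alone would not. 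On tightness the two proofs are essentially the same computation: the paper bounds the Billingsley product moment $\mathbb{E}|Y_n(t_3)-Y_n(t_2)|^2|Y_n(t_2)-Y_n(t_1)|^2\le K(t_3-t_1)^2$ directly (crudely using $|\cos|\le1$), and the product form disposes of your short-interval regime automatically, since the product vanishes when $t_3-t_1\le 1/n$; converting your single-increment fourth-moment bound to this form via Cauchy--Schwarz would streamline your caveat. One cosmetic slip: under Assumption I the $Z_k^{(r)}$ are independent with common variance but not necessarily identically distributed, so avoid calling them i.i.d.; your Lindeberg--Feller formulation already accommodates this.
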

 \begin{proof}
 First, we show that the process $\{Y_n(t)\}_{t\ge0}$ convergences to  $\{W(t)\}_{t\ge0}$ in finite dimensional sense, that is, $
         (Y_n(t):t\in\mathbb{T}_p)\overset{d}{\rightarrow}(W(t):t\in\mathbb{T}_p)$
     for all $p\in\mathbb{N}$, where $\overset{d}{\rightarrow}$ denotes the convergence in distribution.
 For $m_1,\dots,m_p\in\mathbb{N}$, we have
 \begin{align*}
     \mathbb{E}\big[Y_n(t_1)^{m_1}\dots Y_n(t_p)^{m_p}\big]&=\frac{1}{n}\sum_{r=1}^{n}\mathbb{E}\lambda_{r,n}(t_1)^{m_1}\dots\lambda_{r,n}(t_p)^{m_p}\\
     &=\frac{1}{n}\mathbb{E}\mathrm{Trace}((n^{-1/}A_{n}(t_1))^{m_1}\dots (n^{-1/2}A_n(t_p))^{m_p}),
 \end{align*}
 where we have used the fact that symmetric circulant matrices commute and are simultaneously diagonalizable using the discrete Fourier matrix. Hence, the finite dimensional convergence follows from Theorem \ref{thm3} (note that the limit moments are Gaussian moments and hence they determine the finite dimensional distributions uniquely).

 Now, we need to show that the sequence $\{Y_n(t)\}_{t\ge0}$, $n\ge1$ is tight. For $0\leq s\leq t<\infty$, we have
    \begin{equation*}
        |\lambda_{r,n}(t)-\lambda_{r,n}(s)|=\frac{2}{n}\bigg|\sum_{j=1}^{[n/2]}\cos\bigg(\frac{2\pi rj}{n}\bigg)(S_j^{(n)}(t)-S_j^{(n)}(s))\bigg|.
    \end{equation*}
    Then, for $0\leq t_1\leq t_2\leq t_3<\infty$, using (\ref{newp}) and the independent increments property of $\{S_j^{(n)}(t)\}_{t\ge0}$, we get
    \begin{align*}
        \mathbb{E}|Y_{n}(t_3)-Y_{n}(t_2)|^2&|Y_{n}(t_2)-Y_{n}(t_1)|^2\\
        &=\frac{1}{n}\sum_{r=1}^{n}\mathbb{E}|\lambda_{r,n}(t_3)-\lambda_{r,n}(t_2)|^2|\lambda_{r,n}(t_2)-\lambda_{r,n}(t_1)|^2\\
        &\leq \frac{4}{n^5}\sum_{r=1}^{n}\sum_{1\leq j_1,j_2\leq [n/2]}\mathbb{E}\big|(S_{j_1}^{(n)}(t_3)-S_{j_1}^{(n)}(t_2))\big|\big|(S_{j_2}^{(n)}(t_3)-S_{j_2}^{(n)}(t_2))\big|\\
        &\ \ \cdot\sum_{1\leq j_3,j_4\leq [n/2]}\mathbb{E}\big|(S_{j_3}^{(n)}(t_2)-S_{j_3}^{(n)}(t_1))\big|\big|(S_{j_4}^{(n)}(t_2)-S_{j_4}^{(n)}(t_1))\big|\\
        &\leq \frac{4}{n^4}\sum_{j_1=1}^{[n/2]}\mathbb{E}\big|(S_{j_1}^{(n)}(t_3)-S_{j_1}^{(n)}(t_2))\big|^2
        \sum_{j_3=1}^{[n/2]}\mathbb{E}\big|(S_{j_3}^{(n)}(t_2)-S_{j_3}^{(n)}(t_1))\big|^2\\
        &\leq \frac{4}{n^4}[n/2]([nt_3]-[nt_2])[n/2]([nt_2]-[nt_1])\leq K(t_3-t_1)^2,
    \end{align*}
    where $K$ is a constant, independent of the choice of $n$. Thus, the tightness of $\{Y_n(t)\}_{t\ge0}$ follows from Theorem 13.5 and Eq. (13.14) of \cite{Billingsley1999}. This completes the proof.
\end{proof}
\begin{remark}\label{remarkpcopies2} As in Remark \ref{remarkpcopies1} (ii), Theorem \ref{thmwc1} can be extended to
$p$ independent copies of symmetric circulant matrices that satisfy Assumption I. Define the joint empirical process in the natural way by using the same $U_n$ for all the $p$ matrices. Then this process converges weakly a.s.~to $p$ independent standard Brownnian motions. Later we shall study the usual circulant for which we prove similar results. Since those arguments are more general than what is required for the present case, we skip the details here. 
\end{remark}
\section{Patterned matrices with randomly stopped walks}\label{rmtstoppedrw}

Let $X_1, X_2,\dots$ be iid rvs that are independent of the fPP $\{N^{\alpha}(t)\}_{t \geq 0}$. Consider the scaled continuous time random walk $\{c^{-\alpha/2}\sum_{k=1}^{N^\alpha(ct)}X_k\}_{t\ge0}$, $c>0$. It is known that if $X_k$'s belong to the domain of attraction of standard Gaussian distribution then $\{c^{-\alpha/2}\sum_{k=1}^{N^\alpha(ct)}X_k\}_{t\ge0}$ weakly converges to a time-changed process $\{W(L^\alpha(t))\}_{t\ge0}$ in an appropriate Skorokhod topology, as $c\rightarrow\infty$ (see \cite{Meerschaert2012}) where $\{W(t)\}_{t\ge0}$ is a BM that is independent of an inverse $\alpha$-stable subordinator $\{L^\alpha(t)\}_{t\ge0}$ as in Section \ref{fpp}. 

In this section, we study the convergence of random matrices whose entries are randomly stopped independent random walks. 
Let $\{X_{i,j,k}, \ i\ge1,\ j\ge1,\ k\ge1\}$ and $\{X_{i,k} \ i\ge0,\ k\ge1\}$ be collections of independent rvs that are independent of $\{N^\alpha(t)\}_{t\ge0}$. We consider 
continuous time random walks defined as follows: 
\begin{eqnarray}
	\mathcal{S}_{i,j,n}^\alpha(t)\coloneqq &\sum_{k=1}^{N^\alpha(nt)}X_{i,j,k},\ t\ge0,\ i\ge1,\ j\ge1,\ n\ge1\label{ctrw2}, \\
    \mathcal{S}_{i,n}^\alpha(t)\coloneqq &\sum_{k=1}^{N^\alpha(nt)}X_{i,k},\ t\ge0,\ i\ge0,\  n\ge1 \label{ctrw22}.
\end{eqnarray}
Note that (\ref{ctrw2}) and (\ref{ctrw22}) are fractional compound Poisson processes, and are compound Poisson processes for $\alpha=1$. Consider the patterned random matrices $\{A_{n}^\alpha(t)\}_{t\ge0}$ as
\begin{equation}\label{ctrm2}
	A_n^\alpha(t)\coloneqq n^{-\alpha/2}((\mathcal{S}^\alpha_{L^d(i,j), n}(t)))_{1\leq i,j\leq n},\ 0<\alpha\leq 1,\ n\ge1.
\end{equation} 
We also assume that $\{X_{i,j,k}, X_{i,k},\ 0\leq i\leq j,\ k\ge1\}$ satisfies Assumption I and the link function $L^d$ 
is symmetric, and satisfies Property B. 

In commutative probability, a Markov process, time-changed by an inverse stable subordinator, inherently produces a process whose inter-event times have heavy tailed distributions. 
For example, the fPP, see Section \ref{fpp}. Expecting something similar, Figures \ref{fig1} and \ref{fig2} illustrate the ESDs of $n^{-1/2} A_n^{\alpha}(t)$ for different values of $\alpha \in (0,1]$, corresponding to Wigner and Symmetric Circulant type link functions, respectively. It is observed that, 
for smaller values of $\alpha$, the eigenvalue spectrum is more widely spread, whereas increasing $\alpha$ leads to a contraction in the range of the $x$-axis. That is, the tails of the ESDs diminish as $\alpha$ increases.

For $\alpha=1$, entries of $A^{\alpha}_n(t)|_{\alpha=1}$ are $n^{-1/2}\mathcal{S}_{L^d(i,j),n}(t)=n^{-1/2}\sum_{k=1}^{N(nt)}X_{L^d(i,j),k}$, where $\{N(t)\}_{t\ge0}$ is a Poisson process. For each $1\leq i,j\leq n$, the increments in $\mathbb{T}_p(\mathcal{S}_{L^d(i,j),n}(t))$ involve $X_{i,j,k}$'s or $X_{i,k}$'s indexed by Poisson points in disjoint intervals. Hence these increments 
as well as those in 
in $\mathbb{T}_p(A_n^1(t))$ are independent. The following result on  $\mathbb{T}_p(n^{-1/2}A_n^1(t))$ is similar to Theorem \ref{thm3}:
\begin{theorem}\label{thm43}
	Let $A^1_n(t)=A^\alpha_n(t)|_{\alpha=1}$ be as in (\ref{ctrm2}). For $1\leq l<\infty$, let $g(\mathbb{T}_p(n^{-1/2}A_n^1(t)))\coloneqq A_{r_1,n}^1\dots A_{r_l,n}^1$ be a monomial in indeterminates $A_{r,n}^1\in\mathbb{T}_p(n^{-1/2}A_n^1(t))$, $r=1,2,\dots,p$, $p\in\mathbb{N}$. If the limits in (\ref{limcond2}) exist then 
	\begin{equation}\label{thm43re1}
		\lim_{n\rightarrow\infty}\frac{\mathbb{E}\mathrm{Trace}(A_{r_1,n}^1\dots A_{r_l,n}^1)}{n}=\begin{cases}
			0,\ l=2m-1,\\
			\displaystyle\prod_{\substack{j=1
            }}^{m}(t_{r_j}-t_{r_j-1})\sum_{w\in\mathcal{P}_2(2m,g)}\lim_{n\rightarrow\infty}\frac{\#\Pi(\chi_g(w))}{n^{1+m}},\ l=2m
		\end{cases}
	\end{equation}
    for all $m\ge1$.
\end{theorem}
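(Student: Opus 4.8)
The plan is to transcribe the combinatorial argument used for Theorem \ref{thm3}, the only change being that the deterministic index $[nt]$ of the random walk is replaced by the Poisson count $N(nt)$ governing the compound Poisson increments in (\ref{ctrw2})--(\ref{ctrw22}). Two structural facts make this replacement harmless. First, as already observed before the statement, for distinct $r$ the increments $A^1_{r,n}\in\mathbb{T}_p(n^{-1/2}A^1_n(t))$ are mutually independent, because for fixed $(i,j)$ the increment $\mathcal{S}^{(n)}_{i,j}(t_r)-\mathcal{S}^{(n)}_{i,j}(t_{r-1})$ is assembled from the summands $X_{i,j,k}$ whose indices are the Poisson points in the disjoint windows $(nt_{r-1},nt_r]$; disjoint windows give independent counts, and the $X$'s are independent of $N$. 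Second, I need the moment estimate that plays the role of the deterministic bound (\ref{partialsummom}): uniformly in $i,j,n$ and $0\le s<t\le K$,
\begin{equation*}
\Big|\mathbb{E}\big(\mathcal{S}^{(n)}_{i,j}(t)-\mathcal{S}^{(n)}_{i,j}(s)\big)^{h}\Big|\le K_h\,\big(n(t-s)\big)^{[h/2]},\qquad h\ge1 .
\end{equation*}

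I would verify this via the moment--cumulant formula. Writing $M=N(nt)-N(ns)\sim\mathrm{Poisson}(\lambda n(t-s))$, the $j$th cumulant of the centered compound Poisson sum $\mathcal{S}^{(n)}_{i,j}(t)-\mathcal{S}^{(n)}_{i,j}(s)$ equals $\lambda n(t-s)\,\mathbb{E}X^{j}$; since $\mathbb{E}X=0$ the first cumulant vanishes, so in the partition expansion of the $h$th moment only blocks of size at least two occur. The number of such blocks is at most $[h/2]$, which caps the largest power of $n$ at $[h/2]$ and yields the displayed estimate. In particular, for rate $\lambda=1$ the second moment is exactly $\mathbb{E}(\mathcal{S}^{(n)}_{i,j}(t)-\mathcal{S}^{(n)}_{i,j}(s))^{2}=\mathbb{E}M=n(t-s)$.

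With these inputs the argument is a copy of the proof of Theorem \ref{thm3}. Expanding the trace over circuits gives
\begin{equation*}
\frac{\mathbb{E}\mathrm{Trace}(A^1_{r_1,n}\cdots A^1_{r_l,n})}{n}
=\frac{1}{n^{1+l}}\sum_{\pi\ \text{circuit of length }l}\mathcal{S}^{(n)}_\pi(\mathbb{T}_p),
\end{equation*}
where $\mathcal{S}^{(n)}_\pi(\mathbb{T}_p)$ is the expectation of the product of the $l$ entry-increments read along $\pi$. If some index $r_i$ occurs once, or $\pi$ is not $L^d$-matched, independence of the increments together with their zero mean forces $\mathcal{S}^{(n)}_\pi(\mathbb{T}_p)=0$, so only index-matched circuits survive. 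Index-matching factorizes $\mathcal{S}^{(n)}_\pi(\mathbb{T}_p)$ into a product of raw moments of distinct (time-window, $L^d$-value) increments, whence the estimate above gives $|\mathcal{S}^{(n)}_\pi(\mathbb{T}_p)|\le K_l n^{[l/2]}$. For a matched word that is not pair-matched I combine this with the link-function count $\#\Pi(w)\le K_l n^{(l+1)/2}$ (a property of $L^d$ and Property B, see \cite{Bose2021}, p.\,120), so such words contribute $O(n^{-1/2})\to0$. For a pair-matched index word $w\in\mathcal{P}_2(2m,g)$ with representative circuit $\pi_{g,w}$, the surviving term factorizes into $m$ second moments, $\mathcal{S}^{(n)}_{\pi_{g,w}}(\mathbb{T}_p)=\prod_{j=1}^{m}\mathbb{E}(\mathcal{S}^{(n)}_{\cdot}(t_{r_j})-\mathcal{S}^{(n)}_{\cdot}(t_{r_j-1}))^{2}=n^{m}\prod_{j=1}^{m}(t_{r_j}-t_{r_j-1})$; dividing by $n^{1+2m}$, summing over $w\in\mathcal{P}_2(2m,g)$ and letting $n\to\infty$ yields exactly (\ref{thm43re1}).

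The only genuinely new point, and hence the main obstacle, is the moment bound: unlike the deterministic walk the number of summands is itself random, so one must check that the extra fluctuation of the Poisson count $N(nt)$ does not raise the power of $n$ above $[h/2]$. This is precisely what the vanishing of the first cumulant guarantees; once it is in place, the vanishing of unmatched and non-pair-matched contributions and the identification of the limit are all identical to Theorem \ref{thm3}.
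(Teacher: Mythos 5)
Your overall route is the same as the paper's: prove a uniform moment bound for the Poisson-stopped increments, note the mutual independence of the increments in $\mathbb{T}_p(n^{-1/2}A_n^1(t))$ (Poisson points in disjoint windows), and then rerun the circuit/word argument of Theorem \ref{thm3}(i) verbatim. Your treatment of the surviving terms — only index-matched circuits contribute, non-pair-matched words are killed by $\#\Pi(w)\leq K_l n^{(l+1)/2}$ together with the $n^{[l/2]}$ moment cap, and pair-matched words factorize into $m$ exact second moments $n(t_{r_j}-t_{r_j-1})$ before normalizing by $n^{1+2m}$ — agrees with the paper's proof in every detail.

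The one genuine weak point is your derivation of the key moment bound via compound-Poisson cumulants. The identity \textquotedblleft the $j$th cumulant equals $\lambda n(t-s)\,\mathbb{E}X^{j}$\textquotedblright\ presumes the steps $X_{i,j,k}$, $k\geq 1$, are iid, so that $\sum_{k=N(ns)+1}^{N(nt)}X_{i,j,k}$ is a genuine compound Poisson variable. Assumption I only requires independence with uniformly bounded moments: the steps need not be identically distributed, there is no single law \textquotedblleft of $X$\textquotedblright, and the law of the increment then also depends on the random starting index $N(ns)$ (which particular $X_k$'s are swept in matters), so the moment--cumulant computation does not apply as stated. The repair is exactly the paper's conditioning argument: condition on $N(ns)=h$, $N(nt)=r$; the conditional $l$th absolute moment of a sum of $r-h$ independent centered steps is at most $K_l(r-h)^{[l/2]}$ (this bound, unlike the cumulant formula, needs no identical distribution); then average over the Poisson increment using $\mathbb{E}(N(u))^{l}=\sum_{r'=0}^{l}S(l,r')u^{r'}$, with $S(l,r')$ the Stirling numbers of the second kind, to obtain $K_l'\,(n(t-s))^{[l/2]}$, and the exact second moment $n(t-s)$. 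With that substitution your proof coincides with the paper's; in the special iid case your cumulant argument is a correct alternative derivation of the same bound, and you have identified the right mechanism (the vanishing first cumulant caps the number of blocks, hence the power of $n$, at $[h/2]$).
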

\begin{proof}
	For $0\leq s<t<\infty$, using the stationary increments property of the Poisson process,  
    \begin{align*}
		\mathbb{E}|\mathcal{S}_{L^d(i,j),n}(t)-\mathcal{S}_{L^d(i,j),n}(s)|^l
		&=\sum_{h\leq r=0}^{\infty}\mathbb{E}\Big|\sum_{k=h}^{r}X_{L^d(i,j),k}\Big|^l\mathbb{P}\{N(ns)=h,\,N(nt)=r\}\nonumber\\
		&\leq \sum_{h\leq r=0}^{\infty}K_{l}(r-h)^{[l/2]}\mathbb{P}\{N(ns)=h,\,N(nt)=r\}\nonumber\\
		&= K_{l}\sum_{r'=0}^{[l/2]}S([l/2],r')(n(t-s))^{r'}\leq K_l'n^{l/2}(t-s)^{l/2}
	\end{align*}
     for every $l\ge3$ and for all $1\leq i,j\leq n$, and $\mathbb{E}(\mathcal{S}_{L^d(i,j),n}(t)-\mathcal{S}_{L^d(i,j),n}(s))^2=n(t-s)$. Here, we have used the $l$th moment of the Poisson process, that is, $\mathbb{E}(N(t))^l=\sum_{r=0}^{l}S(l,r)t^r$, to get the last step, and where $S(l,r)$ is the Stirling number of second kind. Now, the proof follows similar lines to that of Theorem \ref{thm3} (i).
\end{proof}
\begin{remark}\label{rem41}
	For $\alpha=1$, 
    $\mathcal{S}^1_{i,j,n}(t)=\sum_{k=1}^{N(nt)}X_{i,j,k}$ and $\mathcal{S}^1_{i,n}(t)=\sum_{k=1}^{N(nt)}X_{i,k}$, where $\{N(t)\}_{t\ge0}$ is a homogeneous Poisson process independent of $\{X_{i,j,k}\}$ and $\{X_{i,k}\}$, and $A^1_n(t)=n^{-1/2}(\mathcal{S}^1_{L^d(i,j),n}(t))_{1\leq i,j\leq n}$. From Theorem \ref{thm43}, for $p=1$ and $t_0=0$, it follows by arguments similar to those in the proof of Theorem \ref{thm2as}, that the ESD of $n^{-1/2}A^1_n(t)$ converges weakly a.s.~to a probability distribution whose $l$th moment is given by 
	\begin{equation}\label{rem41re1}
		\lim_{n\rightarrow\infty}\frac{\mathbb{E}\mathrm{Trace}(A_n^1(t))^l}{n^{1+l/2}}=\begin{cases}
			0,\ l=2m-1,\ m\ge1,\vspace{0.2cm}\\
			\displaystyle t^{m}\sum_{w\in\mathcal{P}_2(2m)}\lim_{n\rightarrow\infty}\frac{\#\Pi(w)}{n^{1+m}},\ l=2m,\ m\ge1.
		\end{cases}
	\end{equation}
	Thus, from (\ref{thm1re1}), the LSD of $n^{-1/2}A^1_n(t)$ and of $n^{-1}A_n(t)$ (as in Theorem \ref{thm1}) are identical.
\end{remark}
\begin{figure}[ht!]
    \centering
    \includegraphics[width=0.9\linewidth]{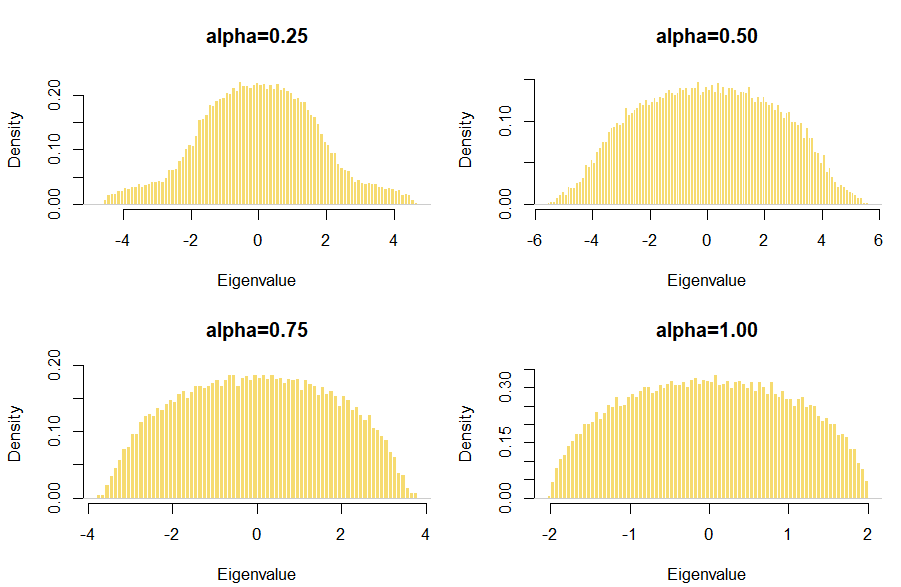}
    \caption{\small ESD of $n^{-1/2}A_n^\alpha(t)$ for Wigner type link function, $n=1000$, $t=1$, and with standard Gaussian step variables $X_{i,j,k}$'s.}\label{fig1}
\end{figure}

\begin{figure}[ht!]
    \centering
    \includegraphics[width=0.9\linewidth]{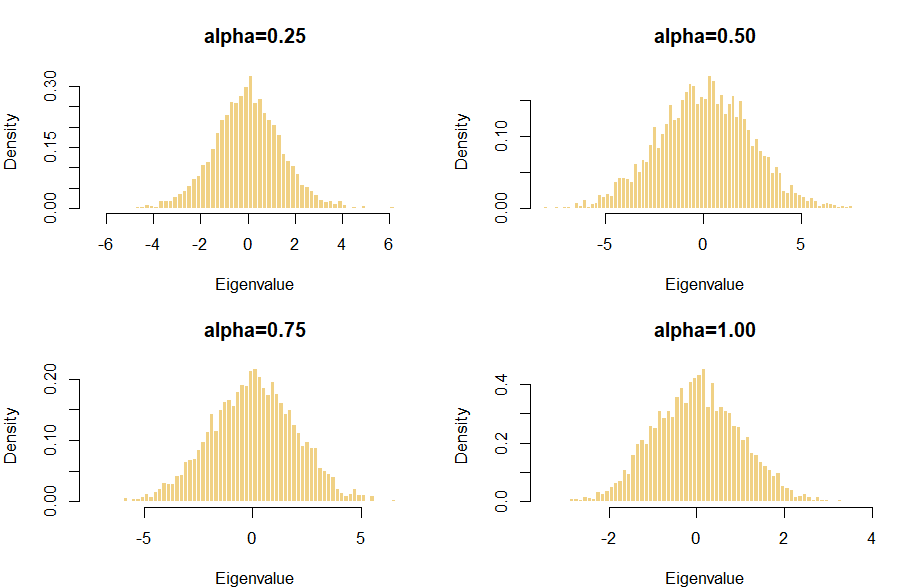}
    \caption{ESD of $n^{-1/2}A_n^\alpha(t)$ for Symmetric Circulant type link function, $n=1500$, $t=1$, and with standard Gaussian step variables $X_{i,j,k}$'s.}\label{fig2}
\end{figure}

\begin{remark}\label{rem42c}
    If the limits (\ref{limcond2}) exist for every monomial of even order then $\{n^{-1/2}A_n^1(t)\}_{t\ge0}$ associated to $(\mathcal{M}_n(\mathbb{C}),\ \tau_n=n^{-1}\mathbb{E}\mathrm{Trace})$ algebraically converges to $\{A^1(t)\}_{t\ge0}$ associated to $(\tilde{\mathcal{A}}^1,\tilde{\tau}^1)$, where algebra $\tilde{\mathcal{A}}^1$  is generated by the arbitrary union $\cup_{p\in\mathbb{N}}\cup_{\mathbb{T}_p}\mathcal{A}^1(\mathbb{T}_p)$ with $\mathcal{A}^1(\mathbb{T}_p)$ being polynomial algebra over $\mathbb{C}$, generated by the collection $\{A^1(t): t\in\mathbb{T}_p\}$. The state $\tilde{\tau}^1$ on it is defined similarly to (\ref{tildetau}). Indeed, $\tilde{\tau}^1(g(\{A^1(t): t\in\mathbb{T}_l\}))$ is a finite sum of limits of type (\ref{thm43re1}) and (\ref{rem41re1}).
	
	If $L^2(i,j)=(\max\{i,j\},\min\{i,j\})$, then the limiting variables $\{A^1(t)\}_{t\ge0}$ form an FBM. If $L^1(i,j)=n/2-|n/2-|i-j||$, $1\leq i,j\leq n$, then the finite dimensional moments of the limit variables agree with those of a BM.
\end{remark}
\begin{remark}\label{rem53}
	Let $N(n)$ be a Poisson rv with mean $n$. Let $\{A_{k,n}\}_{k\ge1}$ be a sequence of $n\times n$ independent random matrices defined as $A_{k,n}=n^{-1/2}((X_{L^d(i,j),k}))_{1\leq i,j\leq n}$ for all $k\ge1$, where the input sequence $X_{i,j,k}$'s or $X_{i,k}$'s are as in Theorem \ref{thm1}. Also, let $\{A_{k,n}\}_{k\ge1}$ be independent of $N(n)$, and have a common symmetric link function, satisfying Property B. If the limits (\ref{classlim}) exist, then the LSD of $n^{-1/2}(A_{1,n}+\dots +A_{N(n),n})$ coincides with that of $A_{k,n}$ for each $k\ge1$. It has a probability law whose $l$th moment equals (\ref{rem41re1}). Thus, from Remark \ref{rem31}, it follows that $n^{-1/2}(A_{1,n}+\dots +A_{N(n),n})$ and $n^{-1/2}(A_{1,n}+\dots+A_{n,n})$ have a common LSD. 
\end{remark}

Let $\{X_{i,j,k}, X_{i,k},\  i\ge0,\ j,\,k\ge1\}$ be as in Theorem \ref{thm1}. Let us consider 
\[S_{i,j,n}(t)=n^{-1/2}\sum_{k=1}^{N(nt)}X_{i,j,k}, \ 1\leq i,j\leq n, \ t\ge0,
\]and a random matrix 
\[A_n(t)=((S_{L^d(i,j),n}(t)))_{1\leq i, j \leq n}.\] Then, it can be easily established that Theorem \ref{thm43} holds true for the matrix valued process $\{A_{n^\alpha}(t)\}_{t\ge0}$, $\alpha\in(0,1]$ for any appropriate link function $L^d$. We now use this fact and the next result to establish the algebraic convergence of  $\{A^\alpha_n(t)\}_{t\ge0}$ (see 
(\ref{ctrm2}) for definition) to the time-changed FBM $\{B^\alpha(t)\}_{t\ge0}$ as defined in (\ref{tcfbm}).

\begin{lemma}\label{lemma1}
    Let $\{N(t)\}_{t\ge0}$ and $\{N^\alpha(t)\}_{t\ge0}$ be the homogeneous and fractional Poisson processes, respectively. Let $\{X_{k}\}_{k\ge1}$ be a sequence of independent rvs that is independent of $\{N(t)\}_{t\ge0}$ and $\{N^\alpha(t)\}_{t\ge0}$. Also, let $\{L^\alpha(t)\}_{t\ge0}$ be an inverse $\alpha$-stable subordinator that is independent of both $\{N(t)\}_{t\ge0}$ and $\{X_{k}\}_{k\ge1}$. If $S_{n}(t)=n^{-1/2}\sum_{k=1}^{N(nt)}X_{k}$ and $S_{n}^\alpha(t)=n^{-\alpha/2}\sum_{k=1}^{N^\alpha(nt)}X_{k}$ for all $t\ge0$, then $S^\alpha_n(t)\overset{d}{=}S_{n^\alpha}(L^\alpha(t))$, where $\overset{d}{=}$ denotes the equality in distribution.
\end{lemma}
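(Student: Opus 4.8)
The plan is to reduce the identity to the subordination relation \eqref{tcr} together with the self-similarity of the inverse stable subordinator, and then to transfer it through the randomly-stopped sums by a conditioning argument. The structural inputs I rely on are that $\{X_k\}$ is independent of every counting process appearing, and that $\{N(t)\}_{t\ge0}$ and $\{L^\alpha(t)\}_{t\ge0}$ are mutually independent (which, combined with $\{X_k\}\perp\{N,N^\alpha\}$, yields mutual independence of $N$, $L^\alpha$ and $\{X_k\}$ on the right-hand side).

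First I would establish the scaling relation $\{L^\alpha(nt)\}_{t\ge0}\overset{d}{=}\{n^\alpha L^\alpha(t)\}_{t\ge0}$. This follows from the self-similarity of the $\alpha$-stable subordinator: from $\mathbb{E}e^{-uH^\alpha(t)}=e^{-tu^\alpha}$ one reads off $\{H^\alpha(as)\}_s\overset{d}{=}\{a^{1/\alpha}H^\alpha(s)\}_s$ for every $a>0$. Writing $L^\alpha(nt)=\inf\{s:H^\alpha(s)\ge nt\}$ and substituting $s=n^\alpha u$ gives $L^\alpha(nt)=n^\alpha\inf\{u:H^\alpha(n^\alpha u)\ge nt\}$; replacing the process $\{H^\alpha(n^\alpha u)\}_u$ by its equal-in-distribution version $\{nH^\alpha(u)\}_u$ turns the condition $H^\alpha(n^\alpha u)\ge nt$ into $H^\alpha(u)\ge t$, whence $L^\alpha(nt)\overset{d}{=}n^\alpha L^\alpha(t)$. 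For a single $t$ one may instead simply compare all moments via \eqref{lmomis}, since $\mathbb{E}(L^\alpha(nt))^l=(n^\alpha)^l\,\mathbb{E}(L^\alpha(t))^l$ and these moments determine the law uniquely.

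Next I would combine this with the subordination identity. Applying \eqref{tcr} at time $nt$ gives $N^\alpha(nt)\overset{d}{=}N(L^\alpha(nt))$, where $N$ is a homogeneous Poisson process independent of $L^\alpha$. Since $N$ is independent of the random time at which it is evaluated, the law of $N(Y)$ depends only on the law of $Y$; hence the scaling of the previous step yields $N(L^\alpha(nt))\overset{d}{=}N(n^\alpha L^\alpha(t))$, and therefore $N^\alpha(nt)\overset{d}{=}N(n^\alpha L^\alpha(t))$.

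Finally I would insert these counts into the stopped sums. Since $S_{n^\alpha}(s)=n^{-\alpha/2}\sum_{k=1}^{N(n^\alpha s)}X_k$, we have $S_{n^\alpha}(L^\alpha(t))=n^{-\alpha/2}\sum_{k=1}^{N(n^\alpha L^\alpha(t))}X_k$, while $S_n^\alpha(t)=n^{-\alpha/2}\sum_{k=1}^{N^\alpha(nt)}X_k$. Conditioning on the counting variable and using that $\{X_k\}$ is independent of both counts, the randomly-stopped partial sum $\sum_{k=1}^{M}X_k$ is determined in law by the law of $M$; as $N^\alpha(nt)\overset{d}{=}N(n^\alpha L^\alpha(t))$ and the common prefactor $n^{-\alpha/2}$ is deterministic, the claim $S_n^\alpha(t)\overset{d}{=}S_{n^\alpha}(L^\alpha(t))$ follows. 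I expect the main obstacle to be the careful justification of the self-similarity step at the process level, so that the very same conditioning argument also delivers the finite-dimensional joint distributions needed in the subsequent algebraic-convergence application, rather than the conditioning itself, which is routine once mutual independence is in place.
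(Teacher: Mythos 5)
Your proof is correct and follows essentially the same route as the paper: self-similarity of the inverse stable subordinator ($L^\alpha(nt)\overset{d}{=}n^\alpha L^\alpha(t)$, which the paper simply cites from Meerschaert--Straka) combined with the time-change relation (\ref{tcr}) to get $N^\alpha(nt)\overset{d}{=}N(n^\alpha L^\alpha(t))$, after which the equality of the stopped sums follows. The only difference is that you spell out what the paper leaves implicit — the derivation of self-similarity from the Laplace transform of $H^\alpha$ and the final conditioning step transferring the count identity through the partial sums — which is a welcome amplification rather than a departure.
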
  
\begin{proof}
 From \cite{Meerscheart2013}, it follows that the inverse $\alpha$-stable subordinator is a self-similar process of index $\alpha\in(0,1)$, that is, $L^\alpha(ct)\overset{d}{=}c^\alpha L^\alpha(t)$ for $c>0$, and from (\ref{tcr}), we have $N^\alpha(nt)=N(L^\alpha(nt))$. Therefore, $N^\alpha(nt)\overset{d}{=}N(n^\alpha L^\alpha(t))$ for all $t\ge0$. This completes the proof.
\end{proof}

\begin{theorem}\label{thm42c}
Let $\{A^\alpha_n(t)\}_{t\ge0}$ be a matrix valued process where $A^\alpha_n(t)$ is defined in (\ref{ctrm2}). For $p\ge1$, $\mathbb{T}_p(n^{-1/2}A^\alpha(t))$ be the collection of increments of $\{A^\alpha_n(t)\}_{t\ge0}$, as defined in (\ref{incset}). Let us consider a monomial $g(\mathbb{T}_p(n^{-1/2}A^\alpha(t)))=A^\alpha_{r_1,n}\dots A^\alpha_{r_l,n}$, where $A_{r_j,n}^\alpha\in\mathbb{T}_p(n^{-1/2}A^\alpha(t))$ for each $j=1,2,\dots,l\ge1$. Also, let $F^{\alpha}_{t_1, \ldots t_p}(x_1, \ldots, x_p)$ be the finite dimensional distribution of an inverse $\alpha$-stable subordinator $\{L^\alpha(t)\}_{t\ge0}$, that is, $F^{\alpha}_{t_1, \ldots t_p}(x_1, \ldots, x_p)=\mathbb{P}\{L^\alpha(t_1)\leq x_1,\dots,L^\alpha(t_p)\leq x_p\}$, $x_1,\dots,x_p\in[0,\infty)$. If the limits in (\ref{limcond2}) exist, then
	\begin{align}\label{thm44re1}
		\lim_{n\rightarrow\infty}&\frac{\mathbb{E}\mathrm{Trace}(A_{r_1,n}^\alpha\dots A_{r_l,n}^\alpha)}{n}\nonumber\\
        &=\begin{cases}
			0,\ l=2m-1,\ m\ge1,\\			\displaystyle
            \iint_{[0,\infty)^{2m}}
            \prod_{\substack{j=1
            }}^{m}(x_{r_j}-x_{r_j-1})\sum_{w\in\mathcal{P}_2(2m,g)}\lim_{n\rightarrow\infty}\frac{\#\Pi(\chi_g(w))}{n^{1+m}}\\
            \hspace{1cm}\cdot \mathrm{d}F^\alpha_{t_{r_1-1},t_{r_1},\dots,t_{r_m-1},t_{r_m}}(x_{r_1-1},x_{r_1},\dots,x_{r_m-1},x_{r_m}),\ l=2m,\,m\ge1.
		\end{cases}
	\end{align}
\end{theorem}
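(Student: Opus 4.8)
The plan is to reduce the fractional case to the already-established homogeneous (Poisson) case by conditioning on the subordinator and then integrating out the randomness of $L^\alpha$. First I would upgrade Lemma \ref{lemma1} to a joint statement. Since a single fractional Poisson clock $\{N^\alpha(t)\}_{t\ge0}$ is shared by all entries and is independent of the family $\{X_{i,j,k}\}$, the time-change relation (\ref{tcr}) together with the self-similarity $L^\alpha(nt)\overset{d}{=}n^\alpha L^\alpha(t)$ gives, jointly over all entries $(i,j)$ and all time points $t_0<\cdots<t_p$,
\begin{equation*}
\big(\mathcal{S}^\alpha_{i,j,n}(t_r)\big)_{i,j,r}\overset{d}{=}\big(\mathcal{S}_{i,j,n^\alpha}(L^\alpha(t_r))\big)_{i,j,r},
\end{equation*}
where on the right $N$ and $L^\alpha$ are independent and $\mathcal{S}_{i,j,n^\alpha}(s)=\sum_{k=1}^{N(n^\alpha s)}X_{i,j,k}$. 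Indeed, conditionally on the shared count vector both sides are sums of the same independent blocks of $X_{i,j,k}$'s over disjoint index ranges, and the two count vectors have the same law by (\ref{tcr}) and self-similarity. Consequently $n^{-1/2}A^\alpha_n(t)\overset{d}{=}n^{-1/2}A_{n^\alpha}(L^\alpha(t))$ as matrix processes, so the trace moment in (\ref{thm44re1}) equals the corresponding trace moment of the increments of $n^{-1/2}A_{n^\alpha}(L^\alpha(\cdot))$.

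Next I would condition on the path of $L^\alpha$. Because $N$ is independent of $L^\alpha$, given $L^\alpha(t_r)=x_r$ the increments of $n^{-1/2}A_{n^\alpha}(L^\alpha(\cdot))$ over $[t_{r-1},t_r]$ are exactly the increments of the homogeneous-clock matrix $n^{-1/2}A_{n^\alpha}(\cdot)$ over the deterministic interval $[x_{r-1},x_r]$, whose count increment is Poisson with mean $n^\alpha(x_r-x_{r-1})$. Applying the version of Theorem \ref{thm43} valid for $\{A_{n^\alpha}(t)\}$ (noted above the lemma), the conditional trace moment vanishes for odd $l$, while for $l=2m$ the normalisation bookkeeping is exact: each entry carries $n^{-(1+\alpha)/2}$, each pair-matched edge contributes the conditional variance $n^\alpha(x_{r_j}-x_{r_j-1})$, there are $\#\Pi(\chi_g(w))\sim n^{1+m}$ matched circuits, and the powers cancel since $-1-(1+\alpha)m+(1+m)+\alpha m=0$. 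Non-pair-matched circuits are negligible exactly as in the proof of Theorem \ref{thm3}(i), using $\#\Pi(w)\le K_l n^{(l+1)/2}$. Hence
\begin{equation*}
\lim_{n\to\infty}\frac{\mathbb{E}\big[\mathrm{Trace}(A^\alpha_{r_1,n}\cdots A^\alpha_{r_{2m},n})\mid L^\alpha\big]}{n}=\prod_{\substack{j=1\\ r_j\ne r_i,\,j\ne i}}^{m}(x_{r_j}-x_{r_j-1})\sum_{w\in\mathcal{P}_2(2m,g)}\lim_{n\to\infty}\frac{\#\Pi(\chi_g(w))}{n^{1+m}}.
\end{equation*}

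The main obstacle, and the only point requiring real care, is interchanging $\lim_{n\to\infty}$ with the expectation over $L^\alpha$. For this I would produce a uniform-in-$n$ dominating bound on the conditional trace moment. Summing the edge-moment estimate $\mathbb{E}\big[|\mathcal{S}_{i,j,n^\alpha}(x_r)-\mathcal{S}_{i,j,n^\alpha}(x_{r-1})|^{h}\mid L^\alpha\big]\le K_h\big(n^\alpha(x_r-x_{r-1})\big)^{[h/2]}$ over all matched circuits, together with the counting bounds on $\#\Pi$, shows that $n^{-1}\mathbb{E}[|\mathrm{Trace}(\cdots)|\mid L^\alpha]$ is bounded, uniformly in $n$, by a fixed polynomial $P(x_0,\dots,x_p)$ in the increments of $L^\alpha$: the net power of $n$ attached to any matched circuit never exceeds that of the pair-matched terms (which is $0$), and the $x$-degree never exceeds $m$. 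Since all finite-dimensional moments of $L^\alpha$ are finite (from \cite{Veillette2010}, already used in Section \ref{fbm}), $P$ is $\mathrm{d}F^\alpha$-integrable, so dominated convergence lets me pass the limit inside the expectation $\int(\cdot)\,\mathrm{d}F^\alpha$. Integrating the conditional limit against $\mathrm{d}F^\alpha_{t_{r_1-1},t_{r_1},\dots,t_{r_m-1},t_{r_m}}$ then yields precisely (\ref{thm44re1}), and the odd case integrates $0$ to $0$.
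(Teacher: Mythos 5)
Your proposal follows essentially the same route as the paper's proof: both use Lemma \ref{lemma1} (time-change relation plus self-similarity of $L^\alpha$) to rewrite the fractional trace moments as integrals of the Poisson-clock moments $S_{\pi,n^\alpha}$ against $\mathrm{d}F^\alpha$, apply the Theorem \ref{thm43}-type result inside the integral, and justify the limit--integral interchange by dominated convergence using the finiteness of all joint moments of the inverse stable subordinator from \cite{Veillette2010}. Your two refinements --- making the joint (all entries, all time points) version of Lemma \ref{lemma1} explicit, and spelling out the uniform-in-$n$ polynomial dominating bound with the exponent bookkeeping $-1-(1+\alpha)m+(1+m)+\alpha m=0$ --- are exactly the steps the paper asserts more briefly, so the argument is correct and not a genuinely different proof.
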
 
\begin{proof}
    For a circuit $\pi$ of length $l\ge1$, set
    {\small\begin{equation*}
        S_{\pi,n}^\alpha=\mathbb{E}(S_{L^d(\pi(0),\pi(1)),n}^\alpha(t_{r_1})-S_{L^d(\pi(0),\pi(1)),n}^\alpha(t_{r_1-1}))\dots(S_{L^d(\pi(l-1),\pi(l)),n}^\alpha(t_{r_l})-S_{L^d(\pi(l-1),\pi(l)),n}^\alpha(t_{r_l-1})).
    \end{equation*}}
    By abuse of notation, let $\{r_1,\dots,r_{l'}\}$, $1\leq 1\leq l$ be the collection of all distinct $r_j$'s, and let $\{S_{i,j,n}(t)=n^{-1/2}\sum_{k=1}^{N(nt)}X_{i,j,k}\}_{t\ge0}$ be independent of $\{L^\alpha(t)\}_{t\ge0}$. Then, from Lemma \ref{lemma1}, we have
    \begin{equation*}
        S_{\pi,n}^\alpha        =\iint_{\mathbb{R}^{2l'}_+}S_{\pi, n^\alpha}\mathrm{d}F^\alpha_{t_{r_1-1},t_{r_1},\dots,t_{r_{l'}-1},t_{r_{l'}}}(x_{r_1-1},x_{r_1}\dots,x_{r_{l'}-1},x_{r_{l'}}),
    \end{equation*}
    where 
    {\footnotesize\begin{equation*}
        S_{\pi, n^\alpha}=\mathbb{E}(S_{L^d(\pi(0),\pi(1)),n^\alpha}(x_{r_1})-S_{L^d(\pi(0),\pi(1)),n^\alpha}(x_{r_1-1}))\dots(S_{L^d(\pi(l-1),\pi(l)),n^\alpha}(x_{r_l})-S_{L^d(\pi(l-1),\pi(l)),n^\alpha}(x_{r_l-1})).
    \end{equation*}}
    As the inverse stable subordinator is an increasing process (see \cite{Meerscheart2013}), for distinct indexes $r_j$'s, the rvs $(S_{L^d(\pi(0),\pi(1)),n^\alpha}(x_{r_1})-S_{L^d(\pi(0),\pi(1)),n^\alpha}(x_{r_1-1})),\dots,(S_{L^d(\pi(l-1),\pi(l)),n^\alpha}(x_{r_l})-S_{L^d(\pi(l-1),\pi(l)),n^\alpha}(x_{r_l-1}))$ are mutually independent due to the independent increments property of the process $\{S_{i,j,n^\alpha}(t)=n^{-\alpha/2}\sum_{k=1}^{N(n^\alpha t)}X_{i,j,k}\}_{t\ge0}$. Thus, 
    \begin{align}
        \lim_{n\rightarrow\infty}\frac{\mathbb{E}\mathrm{Trace}(A_{r_1,n}^\alpha\dots A_{r_l,n}^\alpha)}{n}
        &=\lim_{n\longrightarrow\infty}\frac{1}{n^{1+l/2}}\sum_{\pi:\pi \text{is a circuit of length}\ l}\iint_{\mathbb{R}^{2l'}_+}S_{\pi, n^\alpha}\nonumber\\
        &\hspace{0.5cm}\cdot\mathrm{d}F^\alpha_{t_{r_1-1},t_{r_1},\dots,t_{r_{l'}-1},t_{r_{l'}}}(x_{r_1-1},x_{r_1}\dots,x_{r_{l'}-1},x_{r_{l'}}).\label{thm44pf1}
    \end{align}
    Note that for $l=2m$ in (\ref{thm44pf1}), we have $l'=m$. Similar to Theorem \ref{thm43}, it can be shown that $\sum_{\pi:\pi \text{is a circuit of length}\ l}\lim_{n\longrightarrow\infty}{S_{\pi, n^\alpha}}/{n^{1+l/2}} $ equals (\ref{thm43re1}), 
    which on using in (\ref{thm44pf1}), we get the required result. 
    Also, the interchange of integral and limit in (\ref{thm44pf1}) is justified due to the dominated convergence theorem because for each $\pi$, $S_{\pi,n^\alpha}/n^{1+(1+\alpha)l/2}$ can be bounded by an appropriate function of $x_{r_1-1}, x_{r_1},\dots,x_{r_l-1},x_{r_l}$, and all the finite dimensional joint moments of an inverse stable subordinator exist (see \cite{Veillette2010}). 
\end{proof} 
\begin{remark}\label{rem54}
    For $p=1$, $t_0=0$, from (\ref{thm44re1}), we get $\lim_{n\rightarrow\infty}n^{-1}{\mathbb{E}\mathrm{Trace}(A_{n}^\alpha(t)^{l}}=0$ whenever $l$ is odd. For $m\ge1$, it  reduces to 
    \begin{align}
        \lim_{n\rightarrow\infty}\frac{\mathbb{E}\mathrm{Trace}(A_{n}^\alpha(t)^{2m}}{n}&=\int_{0}^{\infty}x^m\sum_{w\in\mathcal{P}_2(2m,g)}\lim_{n\rightarrow\infty}\frac{\#\Pi(\chi_g(w))}{n^{1+m}}\mathbb{P}\{L^\alpha(t)\in\mathrm{d}x\}\nonumber\\
        &=\frac{t^{m\alpha}\Gamma(m+1)}{\Gamma(m\alpha+1)}\sum_{w\in\mathcal{P}_2(2m,g)}\lim_{n\rightarrow\infty}\frac{\#\Pi(\chi_g(w))}{n^{1+m}},\label{thm41re1}
    \end{align}
    where we have used (\ref{lmomis}). Thus, the EESD of $n^{-1/2}A_n^\alpha(t)$ weakly converges to a probability law whose all odd moments are zero and $2m$th moment equals (\ref{thm41re1}). Moreover, following the proof of Theorem \ref{thm2as}, 
it can be easily shown that the ESD of $n^{-1/2}A^\alpha_n(t)$ converges weakly a.s.~to a probability law with zero odd moments and even moments are given by (\ref{thm41re1}). In particular, the self-adjoint element $n^{-1/2}A_n^\alpha(t)$ of $(\mathcal{M}_n(\mathbb{C}),n^{-1}\mathbb{E}\mathrm{Trace})$ converges to an element $A^\alpha(t)$ of an algebra generated by $A^\alpha(t)$ with a state defined by $\tau((A^\alpha(t))^l)=0$ for all odd $l$ and $\tau((A^\alpha(t))^{2m})$ equals (\ref{thm41re1}) for each $m\ge1$.
\end{remark}

\begin{remark}\label{rem43}
	Let $\{A_{k,n}\}_{k\ge1}$ be a sequence of random matrices as defined in Remark \ref{rem53}, and it is independent of the fractional Poisson rv $N^\alpha(n)$. If the limits (\ref{classlim}) exist, then the ESD of $A_{k,n}$ converges weakly a.s.~to a probability distribution whose $l$th moment is given by (\ref{rem41re1}) evaluated at $t=1$. Moreover, the scaled random sum $n^{-\alpha/2}(A_{1,n}+\dots+A_{N^\alpha(n),n})$ has an LSD whose $l$th moment is given by (\ref{thm41re1}) at $t=1$.
\end{remark}

 The following result follows from Remark \ref{rem54} and the results of \cite{Bose2008}:
\begin{corollary}\label{cor4.1}
   For $t\ge0$, let $A^\alpha_n(t)$ be the random matrix as defined in (\ref{ctrm2}). Then,\vskip5pt
   
   \noindent (i) for $L^2(i,j)=(\max\{i,j\},\min\{i,j\})$, the matrix $n^{-1/2}A^\alpha_n(t)$ in $(\mathcal{M}_n(\mathbb{C}),\tau_n=n^{-1}\mathbb{E}\mathrm{Trace})$ converges to $B^\alpha(t)$ (for definition see (\ref{tcfbm})) in polynomial algebra $\mathbb{C}\langle B^\alpha(t)\rangle$ with state defined as $\tau((B^\alpha(t))^l)=\lim_{n\rightarrow\infty}\tau_n((A^\alpha_n(t))^l)$ for all $l\ge1$;\vskip3pt
   
   \noindent (ii) If $L^1(i,j)=n/2-|n/2-|i-j||$ for $1\leq i,j\leq n$ then $n^{-1/2}A^\alpha_n(t)$ algebraically converges to a variable whose moments 
   determine the probability law same as that of $W(L^\alpha(t))$, where $\{W(t)\}_{t\ge0}$ is a BM that is independent of an inverse $\alpha$-stable subordinator $\{L^\alpha(t)\}_{t\ge0}$.  
\end{corollary}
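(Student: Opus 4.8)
The plan is to obtain both parts by comparing the limiting moments furnished by Remark \ref{rem54} with the prescribed moments of the two target processes, the only pattern-dependent input being the evaluation of the combinatorial sum $\sum_{w\in\mathcal{P}_2(2m)}\lim_{n\to\infty}\#\Pi(w)/n^{1+m}$ taken from \cite{Bose2008}. Setting $p=1$ and $t_0=0$ in Theorem \ref{thm42c} makes the index structure trivial, so $\chi_g$ is a bijection of $\mathcal{P}_2(2m,g)$ onto $\mathcal{P}_2(2m)$ with $\#\Pi(\chi_g(w))=\#\Pi(w)$; thus (\ref{thm41re1}) already tells us that the algebraic limit of $n^{-1/2}A_n^\alpha(t)$ has vanishing odd moments and $2m$th moment equal to $\frac{t^{m\alpha}\Gamma(m+1)}{\Gamma(m\alpha+1)}\sum_{w\in\mathcal{P}_2(2m)}\lim_{n\to\infty}\#\Pi(w)/n^{1+m}$.

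For part (i), the Wigner link, comparing the marginal moment formula (\ref{thm1re1}) with the known semi-circle limit (Corollary \ref{thm2}(i)) identifies this combinatorial sum as the $m$th Catalan number $C_m$. Substituting it into the displayed $2m$th moment gives $C_m\,\frac{t^{m\alpha}\Gamma(m+1)}{\Gamma(m\alpha+1)}$, which is exactly (\ref{evnmom}), the defining $2m$th moment of $B^\alpha(t)=B(L^\alpha(t))$; the odd moments vanish on both sides. Hence the moments of the algebraic limit coincide with those of $B^\alpha(t)$, and, since the Proposition shows via the mgf (\ref{tcmgf}) that (\ref{evnmom}) determines a unique law, the limit is identified with $B^\alpha(t)$ in $\mathbb{C}\langle B^\alpha(t)\rangle$ with state $\tau((B^\alpha(t))^l)=\lim_{n\to\infty}\tau_n((A_n^\alpha(t))^l)$.

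For part (ii), the Symmetric Circulant link, the same comparison (now with Corollary \ref{thm2}(v), the Gaussian limit) identifies the combinatorial sum as $(2m-1)!!=(2m)!/(2^m m!)$, so the limiting $2m$th moment is $(2m-1)!!\,\frac{t^{m\alpha}\Gamma(m+1)}{\Gamma(m\alpha+1)}$. On the other side I would compute the moments of $W(L^\alpha(t))$ by conditioning on the independent subordinator: the odd moments vanish by symmetry of $W$, while $\mathbb{E}[W(L^\alpha(t))^{2m}]=(2m-1)!!\,\mathbb{E}[(L^\alpha(t))^m]$, which by (\ref{lmomis}) equals $(2m-1)!!\,\frac{t^{m\alpha}\Gamma(m+1)}{\Gamma(m\alpha+1)}$. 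The two moment sequences coincide, giving the stated identification.

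The remaining point, and the only place requiring a little care, is moment-determinacy in part (ii): equality of moments with $W(L^\alpha(t))$ yields the claimed distributional identification only if these moments determine a unique law. I would verify this by a Carleman-type estimate on $(2m-1)!!\,\mathbb{E}[(L^\alpha(t))^m]$, using that all finite-dimensional moments of $L^\alpha(t)$ are finite and controlled via \cite{Veillette2010}; the growth of $(2m)!/(2^m m!)$ times $\Gamma(m+1)/\Gamma(m\alpha+1)$ is tame enough for Carleman's condition to hold. I do not expect a genuine obstacle here, since both target laws are already known to be moment-determinate (the FBM time-change through the Proposition, and $W(L^\alpha(t))$ through the finiteness of subordinator moments); the substance of the argument is simply the matching of the two explicit moment sequences via the pattern counts of \cite{Bose2008}.
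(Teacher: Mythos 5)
Your proposal is correct and follows essentially the same route as the paper, which proves the corollary simply by invoking Remark \ref{rem54} together with the pattern counts of \cite{Bose2008} (Catalan numbers for the Wigner link, $(2m)!/(2^m m!)$ for the symmetric circulant) and matching the resulting moment sequences with (\ref{evnmom}) and with $\mathbb{E}[W(L^\alpha(t))^{2m}]=(2m-1)!!\,\mathbb{E}(L^\alpha(t))^m$. Your explicit Carleman check for part (ii) fills in a determinacy detail the paper asserts without proof, and your estimate is sound: since $(\mu^{(2m)})^{1/(2m)}\sim K\,m^{1-\alpha/2}$ with $1-\alpha/2<1$, Carleman's condition indeed holds.
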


\subsection{Symmetric circulant and weak convergence to a time-changed BM}\label{symcirctimechangedsbm} For $\alpha\in(0,1]$, let $\lambda_{1,n}^\alpha(t),\dots,\lambda_{n,n}^\alpha(t)$ be the eigenvalues of the random matrix $A_n^\alpha(t)$, as defined in (\ref{ctrm2}) with link function $L^1(i,j)=n/2-|n/2-|i-j||$, $1\leq i,j\leq n$. Let $U_n$ be a uniform rv on $\{1,\dots,n\}$. We consider a random process $\{Y_n^\alpha(t)\}_{t\ge0}$ defined as
\begin{equation}\label{defctcbm}
    Y_n^\alpha(t)\coloneqq \lambda_{U_n,n}^\alpha(t),\ \text{for all $t\ge0$}
\end{equation}
In particular, we have 
\begin{equation*}
    \lambda_{r,n}^\alpha(t)=n^{-\alpha/2}S_{0,n}^\alpha(t)+2n^{-\alpha/2}\sum_{j=1}^{[n/2]}S_{j,n}^\alpha(t)\cos\Big(\frac{2\pi jr}{n}\Big),\  0<\alpha\leq1,\ r=1,2,\dots,n,
\end{equation*}
where $S^\alpha_{j,n}(t)$ is as defined in (\ref{ctrw22}). For $\alpha=1$, it reduces to
\begin{equation*}
    \lambda_{r,n}^1(t)=n^{-1/2}S_{0,n}(t)+2n^{-1/2}\sum_{j=1}^{[n/2]}S_{j,n}(t)\cos\Big(\frac{2\pi jr}{n}\Big),\ r=1,2,\dots,n,
\end{equation*}
where $S_{j,n}(t)=\sum_{k=1}^{N(nt)}X_{j,k}$ is as defined in  Remark \ref{rem41}, and $\{N(t)\}_{t\ge0}$ is a Poisson process with parameter one and it is independent of $X_{j,k}$'s. Let $\{L^\alpha(t)\}_{t\ge0}$ be an inverse $\alpha$-stable subordinator (see Section \ref{fpp}) that is independent of $\{N(t)\}_{t\ge0}$ and $X_{j,k}$'s. From Lemma \ref{lemma1}, we get $\lambda^\alpha_{r,n}(t)\overset{d}{=}\lambda_{r,n^\alpha}^1(L^\alpha(t))$ for all $t\ge0$. Thus,
\begin{equation}\label{tcr2}
    Y^\alpha_{n}(t)\overset{d}{=}Y^1_{n^\alpha}(L^\alpha(t)),\ t\ge0.
\end{equation}
\begin{theorem}\label{thm4.3}
Let $Y^\alpha_n(t)$ be as defined in (\ref{defctcbm}). Then, the process $\{n^{-1/2}Y_n^\alpha(t)\}_{t\geq 0}$ converges weakly to $\{W(L^\alpha(t))\}_{t\geq 0}$, where $\{W(t)\}_{t\ge0}$ is a BM that is independent of the inverse $\alpha$-stable subordinator $\{L^\alpha(t)\}_{t\ge0}$.
\end{theorem}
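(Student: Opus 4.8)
The plan is to exploit the distributional identity (\ref{tcr2}), namely $Y^\alpha_n(t)\overset{d}{=}Y^1_{n^\alpha}(L^\alpha(t))$ as processes in $t$, which represents the scaled $\alpha$-eigenvalue process as the $\alpha=1$ (homogeneous Poisson clock) eigenvalue process run along an independent inverse stable subordinator. This representation is precisely what makes a direct attack feasible: the fractional Poisson process $\{N^\alpha(nt)\}$ does \emph{not} have independent increments for $\alpha<1$, so the increment decoupling used in the proof of Theorem \ref{thmwc1} is unavailable, whereas after the change of clock the driving process $\{N(n^\alpha s)\}$ is a genuine Poisson process with independent increments and all the dependence is relegated to the well-behaved, continuous, nondecreasing process $L^\alpha$. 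Accordingly I would (i) establish the functional weak convergence of the auxiliary process $\{n^{-1/2}Y^1_{n^\alpha}(s)\}_{s\ge0}$ to a standard Brownian motion $\{W(s)\}_{s\ge0}$ in $D[0,\infty)$, and (ii) transfer this through the independent random time change $L^\alpha(\cdot)$ by a continuous-mapping argument, invoking (\ref{tcr2}) to identify the limit with $\{n^{-1/2}Y^\alpha_n(t)\}$.

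For step (i) I would mirror the proof of Theorem \ref{thmwc1}. Finite-dimensional convergence follows from the method of moments: since symmetric circulants commute and are simultaneously diagonalized by the discrete Fourier matrix,
\begin{equation*}
 \mathbb{E}\Big[\textstyle\prod_{i}\big(n^{-1/2}Y^1_{n^\alpha}(s_i)\big)^{m_i}\Big]=n^{-1}\mathbb{E}\,\mathrm{Trace}\Big(\textstyle\prod_i\big(n^{-1/2}A^1_{n^\alpha}(s_i)\big)^{m_i}\Big),
\end{equation*}
and the right-hand side converges, after rewriting the product in terms of increments and applying the clock-rate-$n^\alpha$ version of Theorem \ref{thm43} (valid by the remark preceding Lemma \ref{lemma1}) together with Theorem 9.5.1 of \cite{Bose2021}, to the corresponding joint moments of a Brownian motion; as Gaussian moments are moment-determinate, finite-dimensional convergence follows. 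Tightness is obtained from the same fourth-moment increment estimate as in Theorem \ref{thmwc1}, now using the independent-increments property and the Poisson moment bounds from the proof of Theorem \ref{thm43} (in particular $\mathbb{E}|\mathcal{S}(s)-\mathcal{S}(s')|^2=n^\alpha(s-s')$ and the higher bounds there): the extra factors of $n^{-\alpha}$ in the eigenvalue normalization cancel against the $n^\alpha$ in the variances, so that
\begin{equation*}
 \mathbb{E}\big|n^{-1/2}Y^1_{n^\alpha}(s_3)-n^{-1/2}Y^1_{n^\alpha}(s_2)\big|^2\big|n^{-1/2}Y^1_{n^\alpha}(s_2)-n^{-1/2}Y^1_{n^\alpha}(s_1)\big|^2\le K(s_3-s_1)^2
\end{equation*}
with $K$ independent of $n$, and Theorem 13.5 (with Eq.~(13.14)) of \cite{Billingsley1999} yields tightness in $D[0,\infty)$.

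For step (ii) I would use that $L^\alpha$ is independent of the matrix entries and is almost surely continuous and nondecreasing, being the inverse of the strictly increasing stable subordinator $H^\alpha$. Independence upgrades the marginal convergence of step (i) to the joint convergence $(n^{-1/2}Y^1_{n^\alpha}(\cdot),L^\alpha(\cdot))\Rightarrow(W(\cdot),L^\alpha(\cdot))$, with $W\perp L^\alpha$ in the limit. The composition map $(x,\ell)\mapsto x\circ\ell$ is continuous in the Skorokhod topology at every pair $(x,\ell)$ with both $x$ and $\ell$ continuous, and almost every realization of $(W,L^\alpha)$ is of this form; hence the continuous-mapping theorem gives $n^{-1/2}Y^1_{n^\alpha}(L^\alpha(\cdot))\Rightarrow W(L^\alpha(\cdot))$. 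Combining this with the process-level identity (\ref{tcr2}) proves that $\{n^{-1/2}Y^\alpha_n(t)\}_{t\ge0}$ converges weakly to $\{W(L^\alpha(t))\}_{t\ge0}$, consistently with the marginal identification in Corollary \ref{cor4.1}(ii) and Theorem \ref{thm42c}.

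The main obstacle is the time-change step: one must justify the continuity of the composition map at the relevant almost-sure limit points despite the flat stretches of $L^\alpha$ (which is not strictly increasing) and the jumps of the pre-limit Poisson-clock process, and confirm that joint convergence with the fixed, independent driver $L^\alpha$ is legitimate. This is the step that genuinely uses the pathwise structure of the inverse stable subordinator rather than merely its moments, and it is what circumvents the lack of independent increments of the fractional Poisson process. A secondary point requiring care is the moment-determinacy of the limiting finite-dimensional laws of $W(L^\alpha(\cdot))$, which is guaranteed since the moment generating function of $W(L^\alpha(t))$ is finite for all real arguments by the Proposition of Section \ref{fbm}.
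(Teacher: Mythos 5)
Your proposal is correct and follows essentially the same route as the paper's proof: establish weak convergence of the Poisson-clock process $\{n^{-1/2}Y^1_{n^\alpha}(t)\}_{t\ge 0}$ to a BM by the moment/tightness argument of Theorem \ref{thmwc1} (via Remark \ref{rem42c}), then upgrade to joint convergence with the independent $L^\alpha(\cdot)$ and apply the identity (\ref{tcr2}) together with the continuous mapping theorem for composition (the paper cites \cite{Meerschaert2012}, Section 4.4, for exactly the time-change continuity point you flag). Your additional detail on tightness and on continuity of $(x,\ell)\mapsto x\circ\ell$ at continuous pairs simply fills in steps the paper leaves to the cited references.
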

\begin{proof}
  In view of Remark \ref{rem42c}, following the proof of Theorem \ref{thmwc1}, it is easy to show that $\{n^{-1/2}Y^1_{n^\alpha}(t)\}_{t\geq 0}$ converges weakly to a BM $\{W(t)\}_{t\geq 0}$. So, $\{(n^{-1/2}Y^1_{n^\alpha}(t), L^\alpha(t))\}_{t\geq 0}$ jointly weakly converges to $\{(W(t),L^\alpha(t))\}_{t\geq 0}$. Thus, from (\ref{tcr2}) and continuous mapping theorem, $\{n^{-1/2}Y_n^\alpha(t)\}_{t\geq 0}$ converges to $\{W(L^\alpha(t))\}_{t\geq 0}$ in an appropriate Skorokhod topology (for details see \cite{Meerschaert2012}, Section 4.4). This completes the proof.
  \end{proof}

  \section{Non-symmetric patterned random matrices}\label{nonsymmetric} So far we have dealt with patterned matrices that have a symmetric link function. Now we consider matrices with link functions that are not necessarily symmetric. Then the matrix polynomials are formed using the matrices along with their transposes. 
  
  The joint algebraic convergence (convergence of the $*$-distributions) of iid copies of these matrices can be discussed under assumptions similar to those we have imposed earlier. Specifically, the joint convergence of all the matrices considered in the next sections below holds.
The specific patterned matrices we consider are the general circulant matrix (the non-symmetric version of the symmetric circulant matrix), and the elliptic matrix (where the $(i,j)$th and the $(j,i)$th entries are correlated, a special case being the iid matrix, the non-symmetric version of the Wigner matrix). 
  Results similar to those derived earlier for the Wigner and the symmetric circulant are given. However, we skip the detailed proofs. 


\subsection{Circulant matrix and complex BM}   

The circulant matrix is defined as $C_n:=((a_{L^1(i,j)}))_{1\leq i, j \leq n}$ where 
\begin{equation}\label{circL}L^1(i,j):=(j-i+n)\ \text{mod}\,n, \ 1\leq i,j\leq n.
\end{equation}
It is a non-symmetric matrix and hence has complex eigenvalues. 
Theorem 3.2.1 of \cite{Bosesaha2018} states that if the entries are independent with mean $0$, variance $1$, and with uniformly bounded $(2+\delta)$th moment for some $\delta > 0$, then the ESD of $n^{-1/2}C_n$ 
converges in probability to the complex (bivariate)  Gaussian distribution. The authors used Gaussian approximation to prove this result.  Under Assumption I, this convergence can be shown to be almost sure. We omit the detailed proof of this.


Now let $C_{k,n}:=n^{-1/2}((a_{k, L^1(i,j)}))_{1\leq i, j \leq n}$, $1\leq k \leq p\in\mathbb{N}$ be $p$ independent $n\times n$ circulant matrices whose entries satsify Assumption I. Then for $1\leq k \leq p$ their eigenvalues are given by 
\begin{equation}\label{circeval}
    \lambda^c_{k, r}\coloneqq n^{-1/2}\sum_{j=0}^{n-1}a_{k,j}\cos\Big(\frac{2\pi rj}{n}\Big)+\iota n^{-1/2}\sum_{j=0}^{n-1}a_{k,j}\sin\Big(\frac{2\pi rj}{n}\Big),\ 0\leq r\leq n-1.
\end{equation}
 Define the following joint 
ESD:
\begin{equation}\label{circesd}
n^{-1}\sum_{r=0}^{n-1}\mathbb{I} \big(\text{Re}(\lambda^c_{k, r})\leq x_k, \text{Im} (\lambda^c_{k, r})\leq y_k), k=1, \ldots p\big). 
\end{equation}
Then, by an easy extension of the Gaussian approximation arguments (Berry-Essen bounds) given in Bose \cite{Bosesaha2018}, we have the following lemma. We omit its proof. We recall that a standard complex Gaussian random variable is of the form $N=(X+\iota Y)/\sqrt{2}$ where $X$ and $Y$ are iid standard Gaussian variables. 

\begin{lemma} The ESD (\ref{circesd}) converges weakly a.s.~to (the distribution of) $(N_1, \ldots, N_p)$ where $\{N_i\}$ are iid complex (considered as elements of $\mathbb{R}^2$) standard Gaussian random variables. 
\end{lemma}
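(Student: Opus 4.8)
The plan is to reduce the claimed weak almost sure convergence of the random distribution functions on $\mathbb{R}^{2p}$ to pointwise almost sure convergence at each fixed argument, and then to establish the latter by a normal-approximation estimate for the mean together with a fourth-moment bound for concentration. Write $\mathbf{x}=(x_1,\dots,x_p)$ and $\mathbf{y}=(y_1,\dots,y_p)$, let $F_n(\mathbf{x},\mathbf{y})$ denote the empirical distribution function in (\ref{circesd}), and for each index $r$ set $V_r=(\mathrm{Re}(\lambda^c_{1,r}),\mathrm{Im}(\lambda^c_{1,r}),\dots,\mathrm{Re}(\lambda^c_{p,r}),\mathrm{Im}(\lambda^c_{p,r}))\in\mathbb{R}^{2p}$, so that $F_n$ is the average over $r=0,\dots,n-1$ of the indicators $\mathbb{I}(V_r\in R)$, where $R$ is the orthant determined by $(\mathbf{x},\mathbf{y})$. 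Since the limit law is a product of $p$ continuous (complex Gaussian) laws, its distribution function $\prod_{k}\Phi_{\mathbb{C}}(x_k,y_k)$ (with $\Phi_{\mathbb{C}}$ the standard complex Gaussian cdf) is everywhere continuous; hence it suffices to prove $F_n(\mathbf{x},\mathbf{y})\to\prod_{k=1}^p\Phi_{\mathbb{C}}(x_k,y_k)$ almost surely at each fixed $(\mathbf{x},\mathbf{y})$, after which a standard monotonicity argument over a countable dense set upgrades this to weak almost sure convergence.

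For the mean I would apply a multivariate Berry--Esseen bound to $V_r$ at each fixed $r$. From (\ref{circeval}), $V_r=n^{-1/2}\sum_{j=0}^{n-1}\xi_{j,r}$ is a normalized sum of $n$ independent $2p$-dimensional summands, the $j$-th summand being built from $a_{1,j},\dots,a_{p,j}$ together with the coefficients $\cos(2\pi rj/n)$ and $\sin(2\pi rj/n)$. A direct second-moment computation shows that for every $r\notin\{0,n/2\}$ the covariance of $V_r$ equals $\tfrac12 I_{2p}$: the real and imaginary parts each have variance $\tfrac12$ because $\sum_j\cos^2(2\pi rj/n)=\sum_j\sin^2(2\pi rj/n)=n/2$, their cross term vanishes because $\sum_j\sin(4\pi rj/n)=0$, and distinct indices $k$ decouple by independence of the $a_{k,j}$. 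The Lyapunov ratio is $O(n^{-1/2})$ uniformly over these $r$, since the coefficients are bounded by $1$ and Assumption I gives uniformly bounded third moments; hence the Berry--Esseen error in replacing $\mathbb{P}(V_r\in R)$ by $\prod_k\Phi_{\mathbb{C}}(x_k,y_k)$ is $O(n^{-1/2})$ uniformly over $r\notin\{0,n/2\}$. Averaging over $r$, and noting that the $O(1)$ degenerate modes $r\in\{0,n/2\}$ contribute only $O(n^{-1})$ to $\mathbb{E}F_n$, yields $\mathbb{E}F_n(\mathbf{x},\mathbf{y})\to\prod_{k}\Phi_{\mathbb{C}}(x_k,y_k)$.

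For the almost sure statement I would bound the fourth central moment $\mathbb{E}\big(F_n(\mathbf{x},\mathbf{y})-\mathbb{E}F_n(\mathbf{x},\mathbf{y})\big)^4$ and invoke Borel--Cantelli. Expanding gives $n^{-4}\sum_{r_1,r_2,r_3,r_4}\mathbb{E}\prod_{i=1}^4\big(\mathbb{I}(V_{r_i}\in R)-\mathbb{E}\mathbb{I}(V_{r_i}\in R)\big)$. The key input is that for generic pairs $r\neq r'$ the coefficient vectors $(\cos(2\pi rj/n))_j,(\sin(2\pi rj/n))_j$ are orthogonal to those at $r'$, so $V_r$ and $V_{r'}$ are asymptotically independent; quantifying this decorrelation (again using the boundedness of all moments from Assumption I) shows that the products of centered indicators are negligible unless the four indices split into two coincident pairs. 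There are $O(n^2)$ such paired quadruples, each contributing $O(1)$, so the whole sum is $O(n^{-4}\cdot n^2)=O(n^{-2})$, while the configurations with more than two distinct indices are of lower order. Since $\sum_n n^{-2}<\infty$, Borel--Cantelli yields pointwise almost sure convergence at each fixed argument, completing the argument.

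I expect the main obstacle to be this last fourth-moment estimate. Because every entry $a_{k,j}$ feeds into all $n$ eigenvalues of the $k$-th circulant, the indicators $\{\mathbb{I}(V_r\in R)\}_r$ are genuinely dependent, so bounded-difference concentration is useless and one must control the covariance decay across Fourier modes sharply enough to beat the $n^4$ combinatorial count and reach the summable rate $O(n^{-2})$. The mean step, by contrast, is routine multivariate Berry--Esseen once the degenerate modes $r\in\{0,n/2\}$ are set aside, which is presumably why the authors describe the extension as easy and omit the details.
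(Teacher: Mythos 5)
Your mean step is sound and matches what the paper gestures at (the paper gives no detailed proof; it invokes ``an easy extension of the Gaussian approximation arguments (Berry--Esseen bounds)'' of \cite{Bosesaha2018}): the variance computation $\sum_j\cos^2(2\pi rj/n)=\sum_j\sin^2(2\pi rj/n)=n/2$ and the vanishing cross term for $r\notin\{0,n/2\}$ are correct, the degenerate modes contribute $O(n^{-1})$, and averaging the uniform Berry--Esseen error gives $\mathbb{E}F_n\to\prod_k\Phi_{\mathbb{C}}$. One factual error in your decorrelation picture, however: since the entries $a_{k,j}$ are real, the eigenvalues satisfy $\lambda^c_{k,n-r}=\bar{\lambda}^c_{k,r}$ exactly, so $V_{n-r}$ is a \emph{deterministic} reflection of $V_r$ (the cosine coefficient vectors at $r$ and $n-r$ coincide and the sine vectors are negatives). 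Your claim that generic distinct frequencies are asymptotically independent is false for these conjugate pairs. This particular issue is repairable --- group indices into conjugacy classes $\{r,n-r\}$; there are still only $O(n^2)$ quadruples collapsing into two classes, and the limit law is conjugation-invariant --- but it must be stated.

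The genuine gap is the fourth-moment estimate, which you assert rather than prove, and which fails at the rate you need. For a quadruple of four distinct (non-conjugate) frequencies, the best bound your tools supply is a multivariate Berry--Esseen approximation of the joint law of $(V_{r_1},\dots,V_{r_4})$ by the product Gaussian, which controls $\mathbb{E}\prod_{i=1}^4(\mathbb{I}(V_{r_i}\in R)-p_{r_i})$ only up to $O(n^{-1/2})$; summing over the $\sim n^4$ such quadruples yields $\mathbb{E}(F_n-\mathbb{E}F_n)^4=O(n^{-1/2})$, not the claimed $O(n^{-2})$, and $\sum_n n^{-1/2}=\infty$, so Borel--Cantelli does not apply. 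Nothing in the proposal supplies the decay $o(n^{-1})$ per off-diagonal quadruple that the count requires; indicators have no algebraic structure forcing the exact cancellation that makes the paper's trace-based fourth-moment bounds (as in Theorem \ref{thm2as}) work, where non-matched circuits vanish \emph{identically} by independence and mean zero. A workable repair is to abandon indicators and prove a.s.\ convergence of the joint \emph{mixed moments} of the ESD: since circulants are simultaneously diagonalized by the Fourier matrix, $\mathrm{Re}\,\lambda^c_{k,r}$ and $\mathrm{Im}\,\lambda^c_{k,r}$ are the (co-indexed) eigenvalues of the commuting matrices $(C_{k,n}+C_{k,n}^T)/2$ and $(C_{k,n}-C_{k,n}^T)/2\iota$, so every mixed moment of (\ref{circesd}) is $n^{-1}\mathrm{Trace}$ of a polynomial in these matrices; the paper's moment machinery then gives convergence of expected moments (consistent with Lemma \ref{lemma51}) together with an exact $O(n^{-2})$ fourth-moment concentration bound, and moment-determinacy of the multivariate Gaussian upgrades this to weak a.s.\ convergence. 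Alternatively, keep your indicator route but raise the moment order, establishing $\mathbb{E}(F_n-\mathbb{E}F_n)^{2k}$ bounds sharp enough to be summable --- but that again demands the quantitative joint decorrelation you have not established.
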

Let us now turn to joint moment convergence. 
Note that Theorem 9.5.1 of \cite{Bose2021} shows that independent scaled \textit{symmetric} circulant matrices converge to independent Gaussian variables algebraically. We need a similar result for circulant matrices. By adapting the arguments given there and those given earlier in Section \ref{jtcov}, it is not too difficult to show that only pair-matched words survive when we consider the moment of any monomial, but it does not seem easy to identify/characterize the algebraic limit directly. On the other hand, Lemma \ref{circesd} suggests this algebraic limit must also be $(N_1, \ldots, N_p)$. 
Indeed we have the following result. 

\begin{lemma}\label{lemma51}
    Let $\{N_k=N_{k,1}+iN_{k,2}\}_{1\leq k\leq p}$, $p\in\mathbb{N}$ be independent complex Gaussian rvs.
    Let $\{n^{-1/2}C_{k,n}\}_{1\leq k\leq p}$ be $n\times n$ independent circulant matrices with input sequences satisfying Assumption I. Then, as elements of $(\mathcal{M}_n(\mathbb{C}), \tau_n=n^{-1}\mathbb{E}\mathrm{Trace}(\cdot))$, they 
    converge in $*$-distribution to $\{N_1, \ldots , N_p\}$. That is,
    \begin{equation*}
        \lim_{n\rightarrow\infty}\tau_{n}(g(\{n^{-1/2}C_{k,n},n^{-1/2}C_{k,n}^T\}_{1\leq k\leq p}))=\mathbb{E}(g(\{N_k,\bar{N}_k\}_{1\leq k\leq p}))
    \end{equation*} 
    for every polynomial $g$. Here, $C^T$ and $\bar{N}$ denote the transpose and complex conjugate of $C$ and $N$, respectively.
\end{lemma}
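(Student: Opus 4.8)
The plan is to exploit the fact that every circulant matrix, together with its transpose, is diagonalized by the \emph{same} discrete Fourier matrix, thereby reducing the $*$-moment on the left-hand side to an average of scalar mixed moments of the eigenvalues. Write $\omega=e^{2\pi\iota/n}$ and let $v_r=n^{-1/2}(\omega^{rj})_{0\le j\le n-1}$, $0\le r\le n-1$, be the columns of the Fourier matrix. Since $L^1(i,j)=(j-i+n)\bmod n$ and the inputs are real, $C_{k,n}^T=C_{k,n}^{*}$, and both are diagonalized by $\{v_r\}$: the eigenvalue of $n^{-1/2}C_{k,n}$ on $v_r$ is $\lambda^c_{k,r}$ of (\ref{circeval}), while that of $n^{-1/2}C_{k,n}^T$ is $\overline{\lambda^c_{k,r}}$. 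As the matrices for distinct $k$ use independent input sequences but share this eigenbasis, any monomial $g$ in $\{n^{-1/2}C_{k,n},n^{-1/2}C_{k,n}^T\}$ is again diagonal in $\{v_r\}$, with eigenvalues $g(\{\lambda^c_{k,r},\overline{\lambda^c_{k,r}}\}_k)$. Consequently
\begin{equation*}
\tau_n(g)=\frac1n\sum_{r=0}^{n-1}\mathbb{E}\,g\big(\{\lambda^c_{k,r},\overline{\lambda^c_{k,r}}\}_{1\le k\le p}\big)=\frac1n\sum_{r=0}^{n-1}\prod_{k=1}^{p}\mathbb{E}\big[(\lambda^c_{k,r})^{a_k}(\overline{\lambda^c_{k,r}})^{b_k}\big],
\end{equation*}
where $a_k$ (resp. $b_k$) counts the occurrences of $C_{k,n}$ (resp. $C_{k,n}^T$) in $g$; the last equality uses that the scalar eigenvalues commute and that the factors across $k$ are independent.

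The heart of the argument is a single-index moment computation. Fixing $k$ and writing $\lambda_r=n^{-1/2}\sum_{j}a_j\omega^{rj}$, expand $\mathbb{E}[\lambda_r^{a}\overline{\lambda_r}^{\,b}]$ as a sum over index tuples; by Assumption I only matched tuples survive, and the leading order in $n$ comes from pair-matchings, which force the two indices of each pair to coincide, say at value $j$. A $\lambda$--$\overline{\lambda}$ pair carries trivial phase and contributes $\sum_j 1=n$, whereas a $\lambda$--$\lambda$ or $\overline{\lambda}$--$\overline{\lambda}$ pair carries phase $\omega^{\pm2rj}$ and contributes $\sum_j\omega^{\pm2rj}$, which vanishes exactly whenever $2r\not\equiv0\pmod n$. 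Hence, for every such $r$, only matchings consisting entirely of $\lambda$--$\overline{\lambda}$ pairs persist; these exist only when $a=b$, there are $a!$ of them, and each contributes $1$ after the $n^{-(a+b)/2}$ normalization. Non-pair matchings lose at least one free summation index and are $O(n^{-1})$ uniformly in $r$, since phases are bounded by $1$ and all entry moments are uniformly bounded. Therefore
\begin{equation*}
\mathbb{E}\big[\lambda_r^{a}\overline{\lambda_r}^{\,b}\big]=\delta_{ab}\,a!+O(n^{-1})\qquad\text{uniformly for }2r\not\equiv0\pmod n.
\end{equation*}

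Since $\delta_{ab}\,a!=\mathbb{E}[N^{a}\overline{N}^{\,b}]$ for a standard complex Gaussian $N=(X+\iota Y)/\sqrt2$, multiplying over $k$ gives, uniformly for $2r\not\equiv0\pmod n$, the identity $\prod_k\mathbb{E}[(\lambda^c_{k,r})^{a_k}(\overline{\lambda^c_{k,r}})^{b_k}]=\mathbb{E}[g(\{N_k,\overline{N_k}\})]+O(n^{-1})$. Averaging over $r$, the at most two exceptional indices $r=0$ and (if $n$ is even) $r=n/2$ contribute $O(n^{-1})$ because the relevant moments stay bounded, while the remaining $n-O(1)$ indices each yield the common value up to a uniform $O(n^{-1})$ error; hence $\tau_n(g)\to\mathbb{E}[g(\{N_k,\overline{N_k}\})]$, which is the claim. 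The main obstacle is precisely the single-index estimate: one must verify that the phase sums annihilate every $\lambda$--$\lambda$ and $\overline{\lambda}$--$\overline{\lambda}$ pairing \emph{exactly} for $2r\not\equiv0$, and that the residual error is uniform in $r$, since it is this uniformity—not mere pointwise convergence for each fixed $r$—that legitimizes interchanging the limit with the average over the growing index set.
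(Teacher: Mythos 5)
Your proof is correct, and while its skeleton coincides with the paper's — both reduce $\tau_n(g)$ via simultaneous diagonalization by the Fourier basis to the average $n^{-1}\sum_{r=0}^{n-1}\prod_{k}\mathbb{E}\bigl[(\lambda^c_{k,r})^{a_k}(\overline{\lambda^c_{k,r}})^{b_k}\bigr]$, using independence across $k$ to factorize — the key scalar step is handled by a genuinely different method. The paper does not compute the single-index mixed moments by hand: it invokes the Gaussian approximation (Berry--Esseen) arguments of \cite{Bosesaha2018} to get, for each fixed $r$, weak convergence of $(\lambda^c_{1,r},\dots,\lambda^c_{p,r})$ to independent standard complex Gaussians, upgrades this to convergence of all mixed moments via uniform integrability supplied by Assumption I, and then obtains the crucial uniformity over $0\leq r\leq n-1$ by appealing to a non-uniform Berry--Esseen bound. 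You instead run a self-contained moment-method computation: expand $\mathbb{E}[\lambda_r^a\overline{\lambda_r}^{\,b}]$, observe that only pair matchings survive at leading order, and use the exact vanishing of the geometric sums $\sum_{j=0}^{n-1}\omega^{\pm 2rj}$ for $2r\not\equiv0\pmod n$ to kill every $\lambda$--$\lambda$ and $\overline{\lambda}$--$\overline{\lambda}$ pair, leaving $\delta_{ab}\,a!=\mathbb{E}[N^a\overline{N}^{\,b}]$ with an explicit $O(n^{-1})$ error that is uniform in $r$ because phases have modulus one and all entry moments are uniformly bounded; the at most two exceptional indices $r=0$ and $r=n/2$ are absorbed into the average by moment boundedness. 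What each approach buys: yours is elementary and self-contained, gives a quantitative $O(n^{-1})$ rate for each mixed moment, and makes the needed uniformity in $r$ transparent rather than delegated to an external non-uniform CLT bound (which the paper only sketches, as it explicitly presents just an outline); the paper's route is shorter given the cited machinery and dovetails with its companion distributional statement on the joint ESD, which requires the Berry--Esseen apparatus anyway. You also rightly identify the uniformity over the growing index set as the point that legitimizes exchanging the limit with the average over $r$ — precisely the role the non-uniform Berry--Esseen bound plays in the paper.
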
 

\begin{proof} 
We present only an outline of the proof.
As circulant matrices commute, any typical monomial in indeterminates $\{C_{k,n}\}_{1\leq k\leq p}$ and $\{C_{k,n}^T\}_{1\leq k\leq p}$ is of the following form: $C_{1,n}^{m_1}(C_{1,n}^T)^{l_1}\cdots C_{p,n}^{m_p}(C_{p,n}^T)^{l_p}$ for some $\{m_1,\dots,m_p,l_1,\dots,l_p\}\subset\mathbb{N}\cup\{0\}$. Note that the circulant matrices are simultaneously diagonalizable using the discrete Fourier matrix. Hence,
\begin{equation*}
  n^{-1}\mathbb{E}  \mathrm{Trace}  \Big(
    (\dfrac{C_{1,n}}{n^{1/2}})^{m_1}\big(\dfrac{C_{1,n}^T}{n^{1/2}}\big)^{l_1}\cdots \big(\dfrac{C_{p,n}}{n^{1/2}}\big)^{m_p}\big(\dfrac{C_{p,n}^T}{n^{1/2}}\big)^{l_p}\Big)=n^{-1}\sum_{r=0}^{n-1}\mathbb{E}\Big(\lambda_{1,r}^{m_1}\bar{\lambda}_{1,r}^{l_1}\dots \lambda_{p,r}^{m_p}\bar{\lambda}_{p,r}^{l_p}\Big),
\end{equation*}
where $\lambda_{k,0},\dots,\lambda_{k,n-1}$ are eigenvalues of $n^{-1/2}C_{k,n}$ for each $1\leq k\leq p$. 
By independence of the matrices, this expectation equals
\begin{equation}\label{eq:traceofprodcirc}
    n^{-1}\sum_{r=0}^{n-1}\mathbb{E}\Big(\lambda_{1,r}^{m_1}\bar{\lambda}_{1,r}^{l_1}\Big)\cdots \mathbb{E}\Big(\lambda_{p,r}^{m_p}\bar{\lambda}_{p,r}^{l_p}\Big).
    \end{equation}
Consider each factor in the product. 
By the Gaussian approximation arguments of \cite{Bosesaha2018}, we know that for every fixed $r$, 
$(\lambda_{l,r}, 1\leq l \leq p)$ converges weakly to $(N_l, \ldots , N_p)$. 
Moreover, using Assumption I, any power of these variables is uniformly integrable, and hence their expectations also converge to the corresponding expectations of the limit variables, for every fixed $r$. Indeed, by using an appropriate non-uniform Berry-Esseen bound, this convergence is also uniform over $0\leq r\leq n-1$. That is, for every $\{m_k, l_k\}$, 
\[\sup_{\substack{1\leq k \leq p,\\0\leq r\leq n-1}}
\Big|\mathbb{E}\big(\lambda_{k,r}^{m_k}\bar{\lambda}_{k,r}^{l_k}\big)-
\mathbb{E}\big(N_{k}^{m_k}\bar N_{k}^{l_k}\big)\Big|\to 0\ \ \text{as}\ \ n\to \infty.
\]
Since (\ref{eq:traceofprodcirc}) is an average over $0\leq r\leq n-1$, the limit in (\ref{eq:traceofprodcirc}) equals $\prod_{k=1}^p\mathbb{E}\big(N_{k}^{m_k}\bar N_{k}^{l_k}\big)$.
This proves the result. 
\end{proof}


Now, with $L^1$,  $S_{i,j}^{(n)}(t)$, and $S_{i,j,n}^\alpha(t)$ as in (\ref{circL}),  (\ref{ctrw1}) and (\ref{ctrw2}), respectively,
define the circulant matrices  with random walk and stopped random walk entries as follows:
\begin{align}
    C_n(t)&\coloneqq((S_{L^1(i,j)}^{(n)}(t)))_{1\leq i,j\leq n},\\
    C_n^\alpha(t)&\coloneqq((S_{L^1(i,j),n}^\alpha(t)))_{1\leq i,j\leq n},\ 0<\alpha\leq 1,\ t\ge0,
    \end{align}
respectively. Then, in view of Lemma \ref{lemma51}, similar to Theorem \ref{thm3} (ii) and Corollary \ref{cor4.1} (ii),  we have the following result. We need the (standard) complex Brownian motion (CBM). We can write it as 
$CBM(t)\stackrel{d}{=} (B_1+\iota B_2)/\sqrt{2}$,  where $B_1$ and $B_2$ are independent BMs and $\iota:=\sqrt{-1}$. Its time changed variant is defined in the obvious way and will be denoted by $CBM^\alpha(\cdot)$.
We omit the proof of the next result. 

\begin{theorem}
    Under Assumption I, $\{n^{-1}C_n(t)\}_{t\ge0}$ and $\{n^{-(1+\alpha)/2}C_n^\alpha(t)\}_{t\ge0}$ converge algebraically (that is, in their $*$-distribution) respectively to $\{CBM(t)\}_{t\ge0}$, and $\{CBM^\alpha (t)\}_{t\ge0}$. 
\end{theorem}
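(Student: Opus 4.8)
The plan is to exploit, exactly as in Lemma \ref{lemma51}, the fact that all circulant matrices are simultaneously diagonalized by the discrete Fourier matrix, so that every $*$-monomial in the matrices $C_n(t_k)$ and their transposes reduces to an average over eigenvalues. Since the state is tracial and the circulant matrices commute, it suffices to establish convergence of the joint $*$-distribution of the \emph{increments} $\Delta_k := n^{-1}\big(C_n(t_k)-C_n(t_{k-1})\big)$ over an arbitrary fixed partition $0 = t_0 < t_1 < \cdots < t_p$; expressing $C_n(t_k)=\sum_{j\le k}\Delta_j$ telescopically is a fixed invertible linear change of generators (as in the proof of Theorem \ref{thm3}), so convergence of the $*$-moments in the $\Delta_k$ transfers to the $C_n(t_k)$. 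The limit process is then pinned down by two features: independence of increments and the marginal law of each increment.

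First I would observe that, because each entry $S^{(n)}_{L^1(i,j)}(t)$ is a random walk with independent increments, the matrices $\Delta_1,\ldots,\Delta_p$ are \emph{independent} circulant matrices: the $(i,j)$th entry of $\Delta_k$ is built from the disjoint block $\{X_{L^1(i,j),r} : [nt_{k-1}] < r \le [nt_k]\}$. Each such entry equals $(t_k-t_{k-1})^{1/2}$ times $n^{-1/2}Z$, where $Z=\big(n(t_k-t_{k-1})\big)^{-1/2}\big(S^{(n)}_{L^1(i,j)}(t_k)-S^{(n)}_{L^1(i,j)}(t_{k-1})\big)$ has mean $0$, variance $\to 1$, and uniformly bounded moments by \eqref{partialsummom}. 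Thus $\Delta_k=(t_k-t_{k-1})^{1/2}\big[\,n^{-1/2}(\text{circulant with entries }Z)\,\big]$, a triangular array to which the Gaussian-approximation (non-uniform Berry--Esseen) argument underlying Lemma \ref{lemma51} applies verbatim. Applying that lemma to the independent family $\{\Delta_k\}$ shows that $(\Delta_1,\ldots,\Delta_p)$ converges in $*$-distribution to independent complex Gaussians $\sqrt{t_k-t_{k-1}}\,N_k$, i.e.\ to the increments of a standard $CBM$. Since transposition commutes with taking increments, the conjugates are handled simultaneously, and the algebraic limit of $\{n^{-1}C_n(t)\}_{t\ge0}$ is identified with $\{CBM(t)\}_{t\ge0}$.

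For the randomly stopped walks I would condition on the inverse stable subordinator and reduce to the Poissonized case, exactly as in the passage from Theorem \ref{thm43} to Theorem \ref{thm42c}. By Lemma \ref{lemma1} one has $S^\alpha_{i,j,n}(t) \overset{d}{=} S_{i,j,n^\alpha}\big(L^\alpha(t)\big)$ jointly in $t$, so $n^{-(1+\alpha)/2}C_n^\alpha(t)$ has the same finite-dimensional $*$-distribution as $(n^\alpha)^{-1}C_{n^\alpha}\big(L^\alpha(t)\big)$ evaluated inside the tensor state $\hat\tau=\mathbb{E}\tau$ on $\mathcal{W}\otimes\mathcal{A}_\infty$. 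Writing each $*$-moment as an integral against the finite-dimensional laws $F^\alpha_{t_1,\ldots,t_p}$ of $L^\alpha$, using the $\alpha=1$ result above together with the uniform moment bounds for circulant eigenvalues (the analogue, for this link, of the bounds used in Theorem \ref{thm42c}) to justify dominated convergence, the inner limit is the corresponding $*$-moment of $CBM$; integrating back against $F^\alpha$ yields precisely the defining moments of $CBM^\alpha$.

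The main obstacle is identification of the limit rather than its existence. That only pair-matched words survive and that the limit is some family of complex Gaussians is routine (and already sketched before Lemma \ref{lemma51}); the delicate point is pinning down the joint $*$-distribution, in particular the relation between a limit variable and its conjugate, equivalently that $\mathbb{E}\big(CBM(t)^2\big)=0$ while $\mathbb{E}\big(CBM(t)\overline{CBM(t)}\big)=t$. This is exactly why I route the argument through the uniform convergence of the \emph{eigenvalue} moments $\mathbb{E}\big(\lambda_{k,r}^{m}\bar\lambda_{k,r}^{\ell}\big)$ furnished by Lemma \ref{lemma51}, rather than through a direct word count: the eigenvalue picture makes the complex-Gaussian covariance structure, and hence the $CBM$ limit, transparent. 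A secondary technical point is the dominated-convergence justification in the time-changed case, which requires moment bounds uniform in the random time argument $L^\alpha(t)$, supplied by the finiteness of all joint moments of the inverse stable subordinator.
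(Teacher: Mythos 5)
Your proposal is correct and follows essentially the same route the paper indicates for its (omitted) proof: decomposition into independent circulant increments as in Theorem \ref{thm3}, identification of the complex Gaussian limit via the simultaneous diagonalization and uniform Berry--Esseen eigenvalue-moment argument of Lemma \ref{lemma51}, and, for the time-changed case, the Lemma \ref{lemma1} conditioning on $L^\alpha$ plus dominated convergence scheme of Theorem \ref{thm42c} and Corollary \ref{cor4.1}(ii). The details you supply beyond the paper's sketch (the triangular-array reading of Lemma \ref{lemma51} for standardized increment entries, and the moment bounds uniform in the random time argument) are precisely the elided steps, so no gap remains.
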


Using (\ref{circeval}), the eigenvalues of $C_n(t)$ and $C_n^\alpha(t)$ are given respectively by 

\begin{align} 
\beta_{r,n}&\coloneqq\sum_{j=0}^{n-1}S_{j,n}(t)\cos\Big(\frac{2\pi rj}{n}\Big)+\iota\sum_{j=0}^{n-1}S_{j,n}(t)\sin\Big(\frac{2\pi rj}{n}\Big),\ 0\leq r\leq n-1,\\
\beta^\alpha_{r,n}&\coloneqq\sum_{j=0}^{n-1}S_{j,n}^\alpha(t)\cos\Big(\frac{2\pi rj}{n}\bigg)+\iota\sum_{j=0}^{n-1}S_{j,n}^\alpha(t)\sin\Big(\frac{2\pi rj}{n}\bigg),\ 0\leq r\leq n-1.
\end{align}

Let $U_n$ be a rv independent of everything else and uniformly distributed on $\{0, 2, \ldots, n-1\}$. Similar to   $Y_n(t)$ and $Y_n^\alpha(t)$ defined in (\ref{newp}) and (\ref{defctcbm}), define the empirical processes  
\begin{align*}Z_n(t)&:=\beta_{U_{n},n}(t), \ \text{and} \ \ Z_n^\alpha(t):=\beta^\alpha_{U_{n},n}(t).
\end{align*} Then similar to Theorems \ref{thmwc1} and \ref{thm4.3}, we have the following result. We omit the proof. 

\begin{theorem} Under Assumption I, $n^{-1}Z_n(\cdot)$ and $n^{-(1+\alpha)/2}Z_n^\alpha(\cdot)$ converge weakly respectively to $CBM(\cdot)$ and $CBM^\alpha(\cdot)$ in suitable Skorokhod topologies.
\end{theorem}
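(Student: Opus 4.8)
The plan is to establish the two weak limits separately, in each case reducing to the corresponding symmetric-circulant statement (Theorems \ref{thmwc1} and \ref{thm4.3}) through the explicit eigenvalue representation, and then handling the random time change by a self-similarity and composition argument. The one structural fact used throughout is that the \emph{same} uniform index $U_n$ drives $Z_n$ and $Z_n^\alpha$, so that averaging any product of eigenvalues over $U_n$ reproduces a normalized trace. I first treat $n^{-1}Z_n(\cdot)\Rightarrow CBM(\cdot)$. For the finite-dimensional distributions, observe that the circulant matrices $\{C_n(t)\}_{t\ge0}$ all commute and are simultaneously diagonalized by the discrete Fourier matrix, while $C_n(t)^T$ is again circulant with eigenvalues $\overline{\beta_{r,n}(t)}$. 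Hence, for all times $t_1,\dots,t_p$ and exponents $m_i,l_i\ge0$,
\[
\mathbb{E}\Big[\prod_{i=1}^{p}\big(n^{-1}Z_n(t_i)\big)^{m_i}\big(n^{-1}\overline{Z_n(t_i)}\big)^{l_i}\Big]
=\tau_n\Big(\prod_{i=1}^{p}\big(n^{-1}C_n(t_i)\big)^{m_i}\big(n^{-1}C_n(t_i)^T\big)^{l_i}\Big),
\]
since the left-hand side is precisely the average over $r$ of $\prod_i (n^{-1}\beta_{r,n}(t_i))^{m_i}(n^{-1}\overline{\beta_{r,n}(t_i)})^{l_i}$. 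By the algebraic ($*$-distribution) convergence of $\{n^{-1}C_n(t)\}_{t\ge0}$ to $\{CBM(t)\}_{t\ge0}$ proved just above (resting on Lemma \ref{lemma51}), the right-hand side converges to the matching joint $*$-moment of $CBM$. Because the finite-dimensional laws of a complex Brownian motion are complex Gaussian and so determined by their mixed moments, this yields $(n^{-1}Z_n(t_1),\dots,n^{-1}Z_n(t_p))\overset{d}{\to}(CBM(t_1),\dots,CBM(t_p))$.

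For tightness I would reproduce the increment estimate from the proof of Theorem \ref{thmwc1}. Expanding $|\beta_{r,n}(t)|^2$ into its cosine and sine contributions, averaging over $U_n$, and using the independent-increments property of the walks $\{S_{j,n}(t)\}_{t\ge0}$ with the moment bound (\ref{partialsummom}), I expect, for $0\le t_1\le t_2\le t_3<\infty$,
\[
\mathbb{E}\big[|n^{-1}Z_n(t_3)-n^{-1}Z_n(t_2)|^{2}\,|n^{-1}Z_n(t_2)-n^{-1}Z_n(t_1)|^{2}\big]\le K\,(t_3-t_1)^{2},
\]
with $K$ independent of $n$; this is the bookkeeping of the symmetric case carried out for two real quadratic forms. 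Tightness in $D[0,\infty)$ then follows from Theorem 13.5 and Eq. (13.14) of \cite{Billingsley1999}, which together with the finite-dimensional convergence gives $n^{-1}Z_n(\cdot)\Rightarrow CBM(\cdot)$.

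For the time-changed process I would follow the device in the proof of Theorem \ref{thm4.3}. The argument above, applied to the Poisson-stopped ($\alpha=1$) walks and using the increment bounds behind Theorem \ref{thm43}, first gives $n^{-1}Z_n^1(\cdot)\Rightarrow CBM(\cdot)$. Next, writing the stopped walk in the normalization of Lemma \ref{lemma1} and invoking the self-similarity $L^\alpha(nt)\overset{d}{=}n^\alpha L^\alpha(t)$ together with the subordination $N^\alpha(nt)\overset{d}{=}N(n^\alpha L^\alpha(t))$ jointly over the frequencies $j$, one obtains a distributional identity expressing $n^{-(1+\alpha)/2}Z_n^\alpha(t)$ through the Poisson-stopped empirical process evaluated at the random time $L^\alpha(t)$, the exact analogue of (\ref{tcr2}). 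Since $\{L^\alpha(t)\}_{t\ge0}$ is independent of the walks, the pair $\big(n^{-1}Z_n^1(\cdot),L^\alpha(\cdot)\big)$ converges jointly to $\big(CBM(\cdot),L^\alpha(\cdot)\big)$; applying the continuous mapping theorem to the composition map $(x,\ell)\mapsto x\circ\ell$ then yields $n^{-(1+\alpha)/2}Z_n^\alpha(\cdot)\Rightarrow CBM(L^\alpha(\cdot))=CBM^\alpha(\cdot)$ in the appropriate Skorokhod topology, exactly as in \cite{Meerschaert2012}.

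The main obstacle is twofold. In the first part the delicate point is not the tightness bound, which is routine, but confirming that the finite-dimensional limit is genuinely the complex Brownian motion: one must verify that both the holomorphic and the anti-holomorphic mixed moments converge to the complex Gaussian values, which is precisely the circulant $*$-algebraic limit of Lemma \ref{lemma51}. In the time-changed part the subtle step is the continuity of the composition (time-change) map in the $J_1$ topology; this is valid here only because the outer limit $CBM$ has continuous paths and the inverse stable subordinator $L^\alpha$ is almost surely continuous and strictly increasing, so that the composition map is continuous at the limit point and the continuous mapping theorem applies.
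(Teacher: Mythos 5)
Your proposal is correct and follows essentially the route the paper itself indicates (the paper omits the proof, declaring it similar to Theorems \ref{thmwc1} and \ref{thm4.3}): finite-dimensional convergence via the identity equating mixed moments of $n^{-1}Z_n$ averaged over $U_n$ with $\tau_n$ of monomials in $n^{-1}C_n(t_i)$ and $n^{-1}C_n(t_i)^T$ (using simultaneous diagonalization and the $*$-convergence resting on Lemma \ref{lemma51}), tightness via the quadratic increment bound and Theorem 13.5 of \cite{Billingsley1999}, and the time-changed case via the subordination identity of Lemma \ref{lemma1} plus joint convergence and the continuous mapping theorem as in \cite{Meerschaert2012}. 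One cosmetic slip worth noting: $L^\alpha$ is a.s.\ continuous and nondecreasing but \emph{not} strictly increasing (it is constant on intervals corresponding to jumps of $H^\alpha$); this is harmless, since continuity of the composition map at the limit only requires the outer path continuous and the inner path continuous and nondecreasing.
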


\subsection{Elliptic matrices with CTRW entries} 
   An elliptic variable $e$ with correlation parameter $-1\leq \rho\leq 1$ can always be described as (see page 38 of \cite{Bose2021})
\[e=\sqrt{\dfrac{1+\rho}{2}} s_1+\iota \sqrt{\dfrac{1-\rho}{2}}s_2,\] where $s_1$ and $s_2$ are two free standard semi-circular variables. A special case is when $\rho=0$ and in that case we get the standard circular variable, $c\stackrel{d}{=} (s_1+\iota s_2)/\sqrt 2$, where $s_1$ and $s_2$ are two free standard semi-circular variables. 

Suppose $B_1(t)_{t\geq 0}$ and $B_2(t)_{t\geq 0}$ are free FBMs and are free of each other. Let 
\[E(t):=\sqrt{\dfrac{1+\rho}{2}} B_1(t)+\iota \sqrt{\dfrac{1-\rho}{2}}B_2(t), \ t\geq 0.\]
Then $E(\cdot)$ has free independent increments, $E(t)=0$, and for every $ s< t$, $(E(t)-E(s))/\sqrt{t-s}$ is an elliptic variable with parameter $\rho$.  
The stopped version of $E(\cdot)$ is defined in the obvious way and will be denoted by $E^\alpha(\cdot)$. 

Suppose the pairs $(X_{i,j}, X_{j,i})$ are independent, each component has mean $0$, variance $1$, and all variables have uniformly bounded moments, and the pairs $(X_{i,j}, X_{j, i})$ have a common correlation $\rho$ for all $i\neq j$. Then the non-symmetric random matrix  $((X_{i,j}))_{1\leq i, j\leq n}$ is called an elliptic matrix.  The special case $\rho=0$ includes the fully independent matrix whose all elements are independent. The special case $\rho=1$ includes the Wigner matrix, which is symmetric. It is known that independent scaled elliptic matrices (even with different $\rho$'s) converge in $*$-distribution to free elliptic variables with the corresponding $\rho$'s (see Theorem 11.1.1 of \cite{Bose2021}). 

Now consider the following assumption which is similar to Assumption I but incorporates dependent variables, though in a limited way. \vskip5pt

\noindent \textbf{Assumption} II \ $\{X_{i,j,k},\ i\ge1,\ j\ge1,\ k\ge1\}$ are rvs such that the pairs $(X_{i,j,k}, X_{j, i,k})$ are independent, have a common  correlation $\rho$, and each component has mean $0$, variance $1$, with all moments uniformly bounded.\vskip5pt

\noindent Using the result cited above, and going through the arguments given earlier for symmetric matrices with random walk entries, we have the following theorem. 
We omit its proof.  
 
\begin{theorem} Consider the elliptic matrices 
$E_n(t):=((S_{i,j}^{(n)}(t)))_{1\leq i,j \leq n}, \ t\geq 0$, where $S_{i,j}^{(n)}(t)$ are as defined in (\ref{ctrw1}) and $\{X_{i,j,k}\}$ satisfy Assumption II. 
 Then $\{n^{-1}E_n(t)\}_{t \geq 0}$ converges in $*$-distribution to $\{E(t)\}_{t \geq 0}$. The stopped version  $n^{-(1+\alpha)/2}E_n^\alpha(\cdot)$ of  $n^{-1}E_n(\cdot)$ converges in $*$-distribution to $E^\alpha(\cdot)$. As a consequence, the ESD of any self-adjoint polynomial in $\{n^{-1}E_n(t)\}_{t \geq 0}$ or $\{n^{-(1+\alpha)/2}E_n^\alpha (t)\}_{t \geq 0}$ converges weakly a.s.~to the probability distribution of the corresponding polynomial in $E(\cdot)$ or $E^\alpha(\cdot)$ respectively.
 \end{theorem}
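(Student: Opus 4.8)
The plan is to reduce the statement to the joint convergence of the increments of $\{n^{-1}E_n(t)\}_{t\ge0}$ and then invoke the static elliptic result, Theorem 11.1.1 of \cite{Bose2021}, in the same way that Theorem \ref{thm3}(ii) reduced the Wigner case to Corollary \ref{thm2}(i). Since $*$-distributional (algebraic) convergence is defined through finite collections of times, I fix a partition $\mathbb{T}_p$ and work with the increment matrices $E_{r,n}:=n^{-1}(E_n(t_r)-E_n(t_{r-1}))$. Because $S_{i,j}^{(n)}(t_r)-S_{i,j}^{(n)}(t_{r-1})=\sum_{k=[nt_{r-1}]+1}^{[nt_r]}X_{i,j,k}$ uses a block of step variables disjoint from those of every other increment, the families $\{E_{r,n}\}_{1\le r\le p}$ are mutually independent as random matrices. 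Writing out the entries shows that $E_{r,n}=\sqrt{t_r-t_{r-1}}\,(1+o(1))\,n^{-1/2}M_{r,n}$, where $M_{r,n}$ is a matrix whose normalized entries have variance one, whose pair $(M_{r,n}(i,j),M_{r,n}(j,i))$ retains correlation $\rho$ inside each time block, and whose moments are uniformly bounded by (\ref{partialsummom}); thus $n^{-1/2}M_{r,n}$ is a standardly scaled elliptic matrix in the sense of Section \ref{nonsymmetric}, with Assumption II supplying exactly the hypotheses of Theorem 11.1.1 of \cite{Bose2021}.

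First I would carry out the trace-moment expansion $\tau_n(g)=n^{-1}\mathbb{E}\mathrm{Trace}(g)$ for a $*$-monomial $g$ in $\{E_{r,n},E_{r,n}^{T}\}$, writing it as a circuit sum as in Section \ref{jtcov}. The two mechanisms already used in the paper remove all but the leading terms: indices $r$ occurring an odd number of times, and non-(index-)matched circuits, contribute $0$ by the independent-increments and zero-mean properties, while non-pair-matched circuits are killed by the cardinality bound $\#\Pi(w)\le K_l n^{(l+1)/2}$ (\cite{Bose2021}, p.~120). Collecting the surviving pair-matched circuits and using $\mathbb{E}(S_{i,j}^{(n)}(t_r)-S_{i,j}^{(n)}(t_{r-1}))^2=[nt_r]-[nt_{r-1}]$ produces the factors $\prod(t_{r_j}-t_{r_j-1})$, exactly as in (\ref{thm3re1}). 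This shows that $\{E_{r,n}\}$ converges jointly in $*$-distribution; what remains is to identify the limit.

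The identification is the main step and the one place where the bookkeeping genuinely departs from the symmetric (Wigner) argument, since one must track the joint $*$-moments of each matrix together with its transpose rather than of a single self-adjoint matrix. For a single increment, Theorem 11.1.1 of \cite{Bose2021} gives that $E_{r,n}$ converges in $*$-distribution to an elliptic variable of variance $(t_r-t_{r-1})$ and parameter $\rho$; this is where $\rho$ enters. Since the increments are independent elliptic families, the same theorem yields their asymptotic freeness, so the limit of $\{E_{r,n}\}$ is a collection of freely independent elliptic variables with variances $t_r-t_{r-1}$. This is precisely the law of the increments of $E(\cdot)=\sqrt{(1+\rho)/2}\,B_1+\iota\sqrt{(1-\rho)/2}\,B_2$, because $B_1,B_2$ are free FBMs that are free of each other; hence $\{n^{-1}E_n(t)\}_{t\ge0}$ converges in $*$-distribution to $\{E(t)\}_{t\ge0}$.

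For the stopped process I would follow Theorem \ref{thm42c} and Corollary \ref{cor4.1} almost verbatim: Lemma \ref{lemma1} supplies the distributional identity $S_{i,j,n}^\alpha(t)\overset{d}{=}S_{i,j,n^\alpha}(L^\alpha(t))$ for the Poisson-stopped walks, so conditioning on the inverse stable subordinator $\{L^\alpha(t)\}$ reduces the $*$-moments of $n^{-(1+\alpha)/2}E_n^\alpha$ to those of the $\alpha=1$ elliptic process evaluated at the random times $L^\alpha(t_r)$, after which one integrates against the finite-dimensional law $F^\alpha_{t_1,\dots,t_p}$; the interchange of limit and integral is justified by dominated convergence using the finiteness of all moments of $L^\alpha(t)$ (\cite{Veillette2010}), exactly as in the proof of Theorem \ref{thm42c}. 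This produces the time-changed elliptic limit $E^\alpha(\cdot)$. Finally, to pass from moment convergence to the a.s.\ weak convergence of the ESD of any self-adjoint polynomial, I would reproduce the fourth-moment estimate of Theorem \ref{thm2as} for the relevant polynomial: jointly-but-not-cross-matched quadruples of circuits vanish and the number of cross-matched quadruples is $O(n^{2l+2})$, giving $\mathbb{E}(\hat\mu_n-\mathbb{E}\hat\mu_n)^4=O(n^{-2})$ and hence almost sure convergence via Borel--Cantelli. The principal obstacle throughout is the transpose/correlation bookkeeping in the identification step; once Theorem 11.1.1 of \cite{Bose2021} is invoked at a single time, the increment and time-change machinery is identical to the symmetric case.
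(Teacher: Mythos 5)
Your proposal is correct and follows exactly the route the paper indicates for this theorem (whose proof it omits): reduce to independent increments, run the circuit/trace-moment machinery of Theorem \ref{thm3}, identify each increment via Theorem 11.1.1 of \cite{Bose2021} as a free elliptic variable with parameter $\rho$, handle the stopped version by conditioning on $L^\alpha$ as in Theorem \ref{thm42c} and Corollary \ref{cor4.1}, and obtain a.s.~ESD convergence via the fourth-moment bound of Theorem \ref{thm2as}. Your transpose/correlation bookkeeping remark is the only point where the elliptic case differs from the symmetric one, and you correctly delegate it to the cited theorem, just as the paper does.
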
 

\end{document}